\numberwithin{equation}{section}
\numberwithin{figure}{section}
\newtheorem{theorem}{Theorem}[section]
\newtheorem{lemma}[theorem]{Lemma}
\newtheorem{proposition}[theorem]{Proposition}
\newtheorem{remark}[theorem]{Remark}
\theoremstyle{definition}
\newtheorem{definition}[theorem]{Definition}
\newtheorem{example}[theorem]{Example}
\newcommand{\C}{{\mathbb{C}}}
\newcommand{\Z}{{\mathbb{Z}}}
\newcommand{\Q}{{\mathbb{Q}}}
\newcommand{\R}{{\mathbb{R}}}
\newcommand{\N}{{\mathbb{N}}}
\renewcommand{\P}{{\mathbb{P}}}
\renewcommand{\iff}{\Leftrightarrow}
\DeclareMathOperator{\init}{in}
\newcommand{\A}{{\mathbb{A}}}
\newcommand{\val}{{\mathrm{val}}}
\newcommand{\trop}{{\mathrm{trop}}}
\newcommand{\tree}{\tau}
\newcommand{\piod}{{\sf p}_{[1,d]}}
\newcommand{\pio}{{\sf p}_{[1]}}
\newcommand{\len}{{\mathrm{len}}}
\newcommand{\cone}{{\mathrm{Cone}}}
\newcommand\CO{{\mathcal O}}
\newcommand\D{{\mathcal D}}
\newcommand\mcP{{\mathcal P}}
\DeclareMathOperator{\Spec}{Spec}
\DeclareMathOperator{\conv}{conv}
\DeclareMathOperator{\ord}{ord}
\DeclareMathOperator{\Div}{div}
\begin{document}

\title{Wall-crossing for Newton-Okounkov bodies and the tropical Grassmannian}

\author{Laura Escobar}
\address{Department of Mathematics and Statistics\\ 
Washington University in St. Louis\\ One Brookings Drive \\ St. Louis, Missouri 63130 \\ U.S.A. }
\email{laurae@wustl.edu}
\urladdr{\url{http://www.math.wustl.edu/~lescobar/}}
\thanks{LE was partially supported by an Association for Women in Mathematics Mentoring Travel Grant, an AMS-Simons Travel Grant, and NSF Grant DMS 1855598}

\author{Megumi Harada}
\address{Department of Mathematics and
Statistics\\ McMaster University\\ 1280 Main Street West\\ Hamilton, Ontario L8S4K1\\ Canada}
\email{Megumi.Harada@math.mcmaster.ca}
\urladdr{\url{http://www.math.mcmaster.ca/Megumi.Harada/}}
\thanks{MH is partially supported by an NSERC Discovery Grant and a Canada Research Chair (Tier 2) award.}

\keywords{} 
\subjclass[2000]{Primary: 14M25, 14T05; Secondary: 13A18}

\date{\today}

%%%%%%%%%%%%%%%%%%%%%
%  Abstract
%%%%%%%%%%%%%%%%%%%%%

\begin{abstract}
Tropical geometry and the theory of Newton-Okounkov bodies are two methods which produce toric degenerations of an irreducible complex projective variety. Kaveh-Manon showed that the two are related. We give geometric maps between the Newton-Okounkov bodies corresponding to two adjacent maximal-dimensional prime cones in the tropicalization of $X$. Under a technical condition, we produce a natural ``algebraic wall-crossing'' map on the underlying value semigroups (of the corresponding valuations). In the case of the tropical Grassmannian $Gr(2,m)$, we prove that the algebraic wall-crossing map is the restriction of a geometric map. In an Appendix by Nathan Ilten, he explains how the geometric wall-crossing phenomenon can also be derived from the perspective of complexity-one $T$-varieties; Ilten also explains the connection to the ``combinatorial mutations'' studied by Akhtar-Coates-Galkin-Kasprzyk.
\end{abstract}

\maketitle

\setcounter{tocdepth}{1}
\tableofcontents

\section{Introduction}\label{sec:intro}

Let $X$ be an irreducible complex
projective variety of dimension $d$. To study the geometry of $X$, we can study the central fiber of a \emph{toric degeneration} $\mathcal{X}$ of $X$, where a toric degeneration is a flat family of varieties whose central fiber $X_0$ is a toric variety; the fact that both $X$ and $X_0$ appear as fibers of the flat family $\mathcal{X}$ means that information about $X$ can be read off of $X_0$. The combinatorial data associated to toric varieties yield powerful tools for computing geometric invariants thereof. Hence, in the presence of a toric degeneration $\mathcal{X}$, it may be hoped that we can obtain geometric information about $X$ from the combinatorics associated to $X_0$.

In this paper, we focus on two well-known methods for constructing toric degenerations: tropical geometry, and the theory of Newton-Okounkov bodies. 
First we briefly recall the tropical geometry picture. Given a variety $X$ as above, realized as $\mathrm{Proj}(A) \cong \mathrm{Proj}(\C[x_1,\ldots,x_n]/I)$ where $A$ is its homogeneous coordinate ring and $\C[x_1,\ldots,x_n]/I$ a choice of presentation of $A$, the tropicalization  $\mathcal{T}(I)$ is a subset of $\R^n$ consisting of those (weight) vectors whose corresponding initial ideals $\init_w(I)$ contain no monomials (see \eqref{eq: def trop KM}).  
In fact, $\mathcal{T}(I)$ carries additional combinatorial structure, namely, it is a $(d+1$)-dimensional subfan of the Gr\"obner fan. A Gr\"obner degeneration of an ideal $I$ to the initial ideal $\init_w(I)$ yields a toric degeneration when the initial ideal $\init_w(I)$ is prime and binomial. Since primality is impossible if $\init_w(I)$ contains a monomial, the tropicalization $\mathcal{T}(I)$ can be viewed as the set of weight vectors which provide candidates for toric degenerations. 
Now we recall the point of view of Newton-Okounkov bodies. A valuation $\nu: A \setminus \{0\} \to \Q^{r}$ yields a multiplicative filtration on $A$ and hence an associated graded algebra $gr_{\nu}(A)$, whose grading is encoded in the value semigroup $S(A,\nu) := \mathrm{image}(\nu)$. When $\nu$ is full-rank, then (since $\C$ is algebraically closed) $\nu$ has one-dimensional leaves by Abhyankar's inequality (cf. \cite[Theorem 2.3]{KavehManon-published}, also \cite[Theorem 6.6.7]{HS06}), which implies that the associated graded $gr_{\nu}(A)$ is a semigroup algebra over the value semigroup $S(A,\nu)$. Hence, when $S(A,\nu)$ is finitely generated, $Proj$ of the associated graded is a (possibly non-normal) toric variety, and the associated degeneration of $A$ to $gr_{\nu}(A)$ is a toric degeneration \cite{Anderson}.

This manuscript was motivated by the results of Kaveh and Manon, who showed in \cite{KavehManon-published} that the two approaches sketched above are related. Let $C$ be a maximal-dimensional cone in $\mathcal{T}(I)$ and let $\init_C(I)$ denote the initial ideal associated to $C$. Assuming that this $\init_C(I)$ is prime, Kaveh and Manon show that the toric degeneration 
associated to $\init_C(I)$ can also be obtained from the point of view of Newton-Okounkov bodies. More precisely, they construct -- using a set of rational and linearly independent vectors $u_1,\ldots,u_{d+1}$ contained in the cone $C$ -- a valuation $\nu: A \setminus \{0\} \to \Q^{d+1}$ with respect to which the associated graded algebra $gr_{\nu}(A)$ of $A$ is isomorphic to the coordinate ring $\C[x_1,\ldots,x_n]/\init_C(I)$ obtained through Gr\"obner theory.

We can now sketch the first result of this paper. Suppose that $C_1$ and $C_2$ are both maximal-dimensional prime cones in $\mathcal{T}(I)$ and suppose they are adjacent, i.e., they share a codimension-$1$ face $C = C_1 \cap C_2$. First, we show that there are choices of $\{u_1, u_2,\ldots, u_{d+1}\} \in C_1$ and $\{u'_1, u'_2,\ldots, u'_{d+1}\} \in C_2$ such that the corresponding Newton-Okounkov polytopes $\Delta(A,\nu_{1})$ and $\Delta(A,\nu_{2})$ project to the same polytope under the linear projection $\piod: \R^{d+1} \to \R^d$ which forgets the last coordinate. We also show that the fibers are of the same Euclidean length (up to a global constant); we illustrate a very simple example in Figure~\ref{fig: fibers}.  The proof relies on variation of GIT quotients \cite{Hu}. Once we know that the fiber lengths are equal, it follows that there are two natural piecewise-linear maps $\mathsf{F}_{12}: \Delta(A, \nu_{1}) \to \Delta(A, \nu_{2})$ and $\mathsf{S}_{12}: \Delta(A, \nu_{1}) \to \Delta(A, \nu_{2})$, the ``flip'' and ``shift'' maps respectively, which behave as the identity on the first $d$ coordinates. We call these \textbf{(geometric) wall-crossing maps}. The precise statement is given in Theorem~\ref{theorem:main}. 

The geometric wall-crossing phenomenon for Newton-Okounkov bodies, as described above, can also be derived from the theory of complexity-one $T$-varieties. Specifically, the content of Theorem~\ref{theorem:main} can be obtained by adapting the arguments in 
\cite{Pet}, which describe Newton-Okounkov bodies for normal complexity-one T-varieties. (More details are in the Appendix.) 
This was observed by Ilten and Manon already in 2017 although not recorded explicitly in \cite{IM}. In the Appendix by Nathan Ilten, this complexity-one perspective is briefly explained; in addition, Ilten explains the connection to the ``combinatorial mutations'' of polytopes, as studied by Akhtar, Coates, Galkin, and Kasprzyk \cite{ACGK}. 

We now describe the second set of results in this paper. In addition to the ``geometric'' wall-crossing maps discussed above, under a certain technical hypothesis (stated precisely in Section~\ref{subsec: algebraic crossing}), it is also possible to construct -- using a set of standard monomials coming from Gr\"obner theory -- a natural bijection $\Theta: S(A, \nu_{1}) \to S(A, \nu_{2})$ commuting with the projection $\piod$. We call this the \textbf{algebraic} wall-crossing. In general, the map $\Theta$ is not straightforward to compute. Since the semigroups $S(A,\nu_{i})$ for $i=1,2$ are subsets of the respective cones $P(A,\nu_{i}) := \mathrm{cone}(\Delta(A,\nu_{i}))$ and the maps $\mathsf{F}_{12}$ and $\mathsf{S}_{12}$ naturally extend to the level of the cones, it is natural to ask whether $\Theta$ is simply the restriction to $S(A,\nu_{1})$ of either of the geometric wall-crossing maps. In Example~\ref{example: algebraic is not geometric} we show that, in general, the answer is no.  However, for the case of the tropical Grassmannian of $2$-planes in $m$-space, we show that the algebraic  wall-crossing map $\Theta$ is the restriction of the ``flip'' map $\mathsf{F}_{12}$; this is recorded in Theorem~\ref{theorem: main Gr2m}.

The results of this paper suggest some natural directions for future work; we mention a small sample. First, 
our Theorem~\ref{theorem: main Gr2m} motivates the natural question: under what conditions is the algebraic wall-crossing map a restriction of a geometric wall-crossing? Secondly, and as a special case,
it seems natural to ask whether our analysis of the algebraic and geometric wall-crossing for $Gr(2,m)$ can be generalized to the tropicalizations of the higher Grassmannians $Gr(k,m)$ for $k>2$. Recent work of Mohammadi and Shaw \cite{MoSh} on $\trop(Gr(3,m))$ suggest that the case $k=3$ may be tractable.
In addition, it is well-known that the Grassmannian $Gr(2,m)$ is a cluster variety, and in this special case, our algebraic wall-crossing $\Theta$ can be seen to be related to cluster mutation. In light of the work
of Rietsch and Williams (e.g. \cite[Corollary 11.16]{Rietsch-Williams}) we hope to better understand, in more generality, the connections between (both the geometric and algebraic) wall-crossing maps and clusters.

We now briefly outline the layout of this paper. In Section~\ref{section:background} we establish the notation and setup for the rest of the paper. In particular, we state precisely the result of Kaveh and Manon, on which this paper relies. We then give a statement of our first main result in Theorem~\ref{theorem:main}, namely, that the fiber lengths are equal. In Section~\ref{sec:fibers} we give a proof of half of Theorem~\ref{theorem:main}, which we formalize in Theorem~\ref{theorem: fiber lengths equal}. In Section~\ref{sec:wall-crossing} we prove the second half of Theorem~\ref{theorem:main}, namely, we construct the geometric  ``shift'' and ``flip'' wall-crossing maps; once we know the equality of fiber lengths, this is quite straightforward. Moreover, in Section~\ref{subsec: algebraic crossing} we define, under an additional technical hypothesis, an ``algebraic wall-crossing'' on the semigroups associated to $C_1$ and $C_2$. We also show that, in general, the algebraic wall-crossing need not arise from either of the geometric wall-crossing maps. Section~\ref{sec:Gr2m} is devoted to the tropical Grassmannian of $2$-planes in $\C^m$, and we work out in detail what our results entail for this special case, including a concrete formula for the ``flip'' geometric wall-crossing map in this case. We prove our main result of this section -- that in this case, the algebraic wall-crossing is the restriction of the ``flip'' geometric map -- in Section~\ref{subsec: algebraic is flip for Gr2m}. Finally, the Appendix by Nathan Ilten discusses the complexity-one $T$-variety perspective.

%%%%%%%%%%%%%%%%%%%%%
%  Section - Background
%%%%%%%%%%%%%%%%%%%%%

\section{Background: Newton-Okounkov bodies and tropical geometry}
\label{section:background}

In this section we briefly recall the background necessary for the statement of our main theorem (Theorem~\ref{theorem:main}). Throughout, $X$ is an irreducible complex projective variety of dimension $d$ and $A$ denotes its homogeneous coordinate ring.  In particular, $A$ is a finitely generated $\C$-algebra and is positively graded. Moreover, from the assumptions on $X$ it follows that $A$ is a domain and has Krull dimension $d+1$. 

We begin with a brief account of the theory of Newton-Okounkov bodies; see \cite{KavehManon-published} for details. We restrict to the setting above. Let $r$ be an integer, $0 < r \leq d$.
Let $\prec$ denote a total order on $\Q^r$ which respects addition.

\begin{definition}\label{def:valuation} (\cite[Definition 2.1]{KavehManon-published}) 
Consider $(\Q^r, \prec)$ as an abelian group equipped with the total order $\prec$. A function $\nu: A \setminus \{0\} \to \Q^r$ is a \textbf{valuation} over $\C$ if 
\begin{enumerate} 
\item for all $0 \neq f,g$ in $A$ with $0 \neq f+g$ we have $\nu(f+g) \succeq \min \{\nu(f), \nu(g)\}$
\item for all $0 \neq f, g$ in $A$ we have $\nu(fg) = \nu(f)+\nu(g)$ and 
\item for all $0 \neq f$ and $0 \neq c \in \C$ we have $\nu(cf) =\nu(f)$, or equivalently, $\nu(c)=0$ for all $0 \neq c \in \C$. \qedhere
\end{enumerate} 
\end{definition}

The valuation $\nu$ also gives rise to a multiplicative filtration $\mathcal{F}_\nu$ on $A$ as follows. For $a \in \Q^r$ we define  
	\begin{equation}\label{eq: multiplicative filtration for valuation}
	F_{\nu \succeq a} := \{ f \in A \setminus \{0\}  \, \vert \, \nu(f) \succeq a \} \cup \{0\} 
	\qquad
	\text{and}
	\qquad
	F_{\nu \succ a} := \{ f \in A \setminus \{0\}  \, \vert \, \nu(f) \succ a \} \cup \{0\}.
	\end{equation}
A valuation $\nu:A\setminus\{0\}\to\Q^r$ has \textbf{one-dimensional leaves} if for every $a\in\Q^r$ the vector space $ F_{\nu \succeq a} / F_{\nu \succ a}$ is at most one-dimensional.
The \textbf{associated graded algebra $gr_{\nu}(A)$} is defined to be 
	\begin{equation}\label{eq: associated graded for valuation} 
	gr_{\nu}(A)  = \bigoplus_{a \in \Q^r} F_{\nu \succeq a} / F_{\nu \succ a}. 
	\end{equation}
	The ring structure on $gr_{\nu}(A)$ is induced from the ring structure on $A$. 
By construction, $gr_{\nu}(A)$ is graded by $S(A,\nu)$ since $F_{\nu \succeq a} / F_{\nu \succ a} \neq 0$ if and only if $a\in S(A,\nu)$. 
Note that an element $g \in A \setminus \{0\}$ can be mapped to the associated graded $gr_{\nu}(A)$ by considering its associated equivalence class in the quotient $F_{\nu \succeq a}/F_{\nu \succ a}$, where $a=\nu(g)$. Also, having one-dimensional leaves implies that, given a vector space basis for $gr_\nu(A)$ which is homogeneous with respect to its grading, the map which sends an element of the basis to its degree is a bijection.  

We restrict attention to valuations of the following form. For a positively graded algebra $A = \oplus_{k \geq 0} A_k$, we say that a valuation $\nu$ is \textbf{homogeneous} on $A$ if the following holds: for any $0 \neq f_1 \in A$ and $0 \neq f_2 \in A$, if $\deg(f_1) < \deg(f_2)$ then $\nu(f_1) \succ \nu(f_2)$ (note the switch).  
Specifically, we always assume we have a valuation $\nu: A \setminus \{0\} \to \N \times \Q^{r-1} \subseteq \Q^r$ such that its first component is the degree, i.e. 
\begin{equation}\label{eq:valuation on pos graded}
\nu(f) = (\deg(f), \nu)  : A \setminus \{0\} \to \N \times \Q^{r-1}.
\end{equation}
where the total order on $\N \times \Q^{r-1}$ is defined as follows: for $(a,v), (b,w) \in \N \times \Q^{r-1}$, 
\begin{equation}\label{eq:total order with degree}
(a, v) \preceq (b,w) \textup{ if and only if }  ( a > b, \textup{ or, } (a=b \textup{ and } v \preceq_{\Q^{r-1}} w))
\end{equation}
where the order $\preceq_{\Q^{r-1}}$ on $\Q^{r-1}$ is taken to be the standard lex order.  Note this ordering first compares the first coordinates and then breaks ties with the remaining coordinates; moreover, there is a reversal of the ordering on the first coordinate. Clearly, such a valuation is homogeneous.  
We additionally assume $\nu: A \setminus \{0\} \to \Q^r$ is a discrete \footnote{A valuation is \textbf{discrete} if the image of the valuation is discrete in the target (in other words, for any $y \in \nu(A \setminus \{0\})$, there exists an open neighborhood $U$ of $y$ in $\Q^r$ such that $U \cap \nu(A \setminus \{0\}) = \{y\}$).} valuation. 
The image $S(A,\nu) := \nu(A \setminus \{0\}) \subseteq \Q^r$ of such a valuation is a discrete additive semigroup of $\Q^r$ and is called the \textbf{the value semigroup (of $\nu$)}.
The \textbf{rank of the valuation} is the rank of the group generated by its value semigroup.

\begin{definition}\label{definition: NOBY} 
Let $A$ be the homogeneous coordinate ring of a projective variety and $\nu$ a discrete homogeneous valuation on $A$.
 The \textbf{Newton-Okounkov cone} of $(A,\nu)$ is the convex set 
 \[
\mathrm{Cone}(S(A,\nu)) := \left\{ \sum_{i=1}^n t_i s_i \, \mid \, s_i \in S(A,\nu), t_i \in \R_{\geq 0} \right\} \subseteq \R^{d+1},
\]
 i.e. the non-negative real span of elements of $S(A,\nu)$. 
 The \textbf{Newton-Okounkov body} of $(A,\nu)$ is the convex set $\Delta(A,\nu):=\{x_1=1\}\cap \cone(S(A,\nu))$.
\end{definition}

Following \cite{KavehManon-published} we say that a set $\mathcal{B} \subseteq A \setminus \{0\}$ is a \textbf{Khovanskii basis} for $(A,\nu)$ if the image of $\mathcal{B}$ in $gr_{\nu}(A)$ forms a set of algebra generators of $gr_{\nu}(A)$. 
Note that existence of a finite Khovanskii basis for $(A,\nu)$ implies that the associated value semigroup $S(A,\nu)$ is finitely generated, which in turn means that the Newton-Okounkov body of Definition~\ref{definition: NOBY} is a convex rational polytope, and thus is a combinatorial object.

We next briefly recall some basic terminology in tropical geometry; for details see \cite{KavehManon-published}. Let $A$ be an algebra as above and suppose $\mathcal{B} = \{b_1, \ldots, b_n\}$ is any finite set of algebra generators of $A$ which we assume to be homogeneous of degree $1$. Consider the surjective $\C$-algebra homomorphism
\begin{equation}\label{eq:surjection to A}
\pi: \C[x_1, \ldots, x_n] \to A
\end{equation}
defined by $\pi(x_i) = b_i$ for $1 \leq i \leq n$. This is a map of graded rings provided that we define the grading on the polynomial ring by $\deg(x_i)=1$ for all $i$. Let $I := \ker(\pi) \subseteq \C[x_1,\ldots,x_n]$
which is homogeneous since $\pi$ preserves degrees. Then we have a natural presentation $A \cong \C[x_1, \ldots, x_n]/I$
associated to this choice of generating set $\mathcal{B}$, realizing
$Spec(A)$ explicitly as a subvariety of $\C^n$. Note that $Spec(A)$ is the 
affine cone over $X \cong Proj(A)$ so we use the notation $\tilde{X} := Spec(A).$

As noted in \cite[Introduction]{KavehManon-published}, conceptually it is more appropriate to talk about the tropicalization of a subvariety of a torus. Geometrically, this corresponds to looking at the 
intersection $\tilde{X}^0 := \tilde{X} \cap (\C^*)^n \subseteq \mathbb{A}^n$ of $\tilde{X}$ with the torus $(\C^*)^n$ sitting naturally in $\mathbb{A}^n$. Algebraically, this corresponds to looking at the algebra 
\begin{equation}\label{eq: torus part} 
\C[x_1^{\pm 1}, \cdots, x_n^{\pm 1}]/ I^L, \quad \textup{ where } I^L :=  I \cdot \C[x_1^{\pm 1}, \cdots, x_n^{\pm 1}].
\end{equation}
We will consider both $I$ and $I^L$ below. 
Following \cite{KavehManon-published} we define the \textbf{tropicalization} $\mathcal{T}(I)$ (or \textbf{tropical variety}) of $\tilde{X}$ corresponding to the choice of presentation $A \cong \C[x_1,\ldots,x_n]/I$ by 
\begin{equation}\label{eq: def trop KM}
\mathcal{T}(I) := \{ w \in \Q^n \, \vert \, \init_w(I) \textup{ does not contain any monomials } \}.
\end{equation} 
This definition is a priori different from the definition appearing in \cite[Section 3.2]{MaclaganSturmfels}, 
but it is not difficult to see that they are in fact equivalent. 
Thus 
$\mathcal{T}(I)$ is a polyhedral fan which is pure of dimension $d+1$ and also 
is a subfan of the Gr\"obner fan \cite[Proposition 3.2.8, Theorem 3.3.5]{MaclaganSturmfels}. For each cone $C$ in $\mathcal{T}(U)$ there is a unique initial ideal denoted by $\init_C(I)$ associated to this cone, defined to be $\init_{\omega}(I)$ for any $\omega$ in the interior of $C$.

\begin{definition}\label{def: prime cone} 
We say a cone $C$ in $\mathcal{T}(I) = \trop(X^0)$ is a \textbf{prime cone} if the corresponding initial ideal $\init_C(I)$ is a prime ideal.
A \textbf{maximal-dimensional prime cone} is a prime cone $C$ with maximal dimension, i.e., $\dim_{\R}(C)=d+1$.  
(In \cite{KavehManon-published} they use the terminology ``maximal prime cone'' instead.) 
\end{definition}

To state the result of Kaveh and Manon which relates Newton-Okounkov theory to tropicalizations, we need the notion of a quasivaluation, which is nearly identical to that of a valuation (cf. Definition~\ref{def:valuation}) except that we allow for superadditivity in the multiplication. 

\begin{definition}\label{def: quasivaluation} (\cite[Definition 2.26]{KavehManon-published}) 
Consider $(\Q^r, \prec)$ as an abelian group equipped with the total order $\prec$. 
Let $A$ be a $\C$-algebra. A function $\nu: A \setminus \{0\} \to \Q^r \cup \{\infty\}$ is a \textbf{quasivaluation} over $k$ if 
\begin{enumerate} 
\item For all $0 \neq f, g, f+g$ we have $\nu(f+g) \succeq \min\{\nu(f), \nu(g)\}$.
\item For all $0 \neq f, g \in A$ we have $\nu(fg) \succeq \nu(f) + \nu(g)$. 
\item For all $0 \neq f \in A$ and $0 \neq c \in \C$ we have $\nu(cf)=\nu(f)$. \qedhere
\end{enumerate} 
\end{definition} 
As in the case of valuations, a quasivaluation gives rise to a filtration of the original algebra, as well as an associated graded algebra, by using the same formulas~\eqref{eq: multiplicative filtration for valuation} and~\eqref{eq: associated graded for valuation}. 
Conversely, one can construct a quasivaluation from a decreasing algebra filtration $\mathcal{F} = \{F_a\}_{a \in \Q^r}$ of $A$ by $\C$-subspaces by defining, for any $0 \neq f \in A$, 
\begin{equation}\label{eq: def quasival from filtration} 
\nu_{\mathcal{F}}(f) := \max \{ a\in \Q^r : f \in F_a\}.
\end{equation}
(If the max is not attained, we define $\nu_{\mathcal{F}}(f) := \infty$.) The quasivaluations which are central to Kaveh and Manon (and also for this paper) all arise in this manner via a pushforward filtration, as we now describe.

Let $M \in \Q^{r\times n}$ be a matrix. For $p = \sum_{\alpha} c_{\alpha} x^{\alpha} \in \C[x_1, \ldots, x_n]$ we define the $\Q^r$-valued \textbf{ weight valuation} $\tilde{\nu}_M: \C[x_1,\ldots,x_n]  \setminus \{0\} \to \Q^r$ associated to $M$ by
	\begin{equation}\label{eq: weight valuation of matrix}
	\tilde{\nu}_M(p) := \min \{ M \alpha : c_{\alpha} \neq 0 \} \in \Q^r
	\end{equation}
where $M \alpha$ is the usual matrix multiplication, the exponent vector $\alpha$ is treated as a column vector, and the minimum is taken with respect to the fixed total ordering on $\Q^r$. \footnote{Note that if $M \in \Q^{1 \times n}$ is a single row vector, then $M \alpha$ is just the usual inner product pairing of a ``rank-$1$ weight vector'' against the exponent vector $\alpha$, and the above rule recovers the usual Gr\"obner theory.}
In Gr\"obner theory one frequently takes a maximum, but in this paper we take the ``minimum'' convention. Similarly we define the initial form of $p$ with respect to $M$ by 
\[
\init_M(p) = \sum_{\beta: M\beta = \tilde{\nu}_M(p)} c_\beta x^\beta
\]
so we take only those terms with minimal value of $M\beta$. We define the initial ideal of $I$ with respect to $M$, denoted $\init_M(I)$, to be the ideal generated by all $\init_M(p)$ for $p \in I$. Note that $\init_M(I)$ is $M$-homogeneous in the sense that $h = \init_M(h)$ for all $h \in \init_M(I)$. 
Next, let $\tilde{\mathcal{F}}_M$ denote the (decreasing) filtration on $\C[x_1,\ldots,x_n]$ obtained from $\tilde{\nu}_M$. 
We define 
the \textbf{weight filtration} of $A$ (associated to the surjection $\pi$ and the matrix $M$) to be the pushforward filtration $\pi(\tilde{\mathcal{F}}_M)$ on $A$ given by the surjection in~\eqref{eq:surjection to A}. 
The \textbf{weight quasivaluation} $\nu_M$ is the quasivaluation on $A$ associated to this weight filtration as defined by~\eqref{eq: def quasival from filtration}. In general, it need not be a valuation. 

We say that a $\mathbb{C}$-vector space basis $\mathbb{B}$ for $A$ is an \textbf{adapted basis} for $(A,\nu)$ if the image of $\mathbb{B}$ in $gr_{\nu}(A)$ forms a vector space basis for $gr_{\nu}(A)$. 
In the case of a valuation of the form $\nu_M$ for some $M$ as above, we will see below (Theorem~\ref{theorem:kavehmanon theorem 4}) that an adapted basis can be obtained through Gr\"obner theory. Recall that the maximal cones of the Gr\"obner fan are indexed by monomial orders $<$;
let $C_<$ denote the maximal cone corresponding to $<$. Now suppose $C$ is a cone of the tropicalization $\mathcal{T}(I)$ which is also a face of $C_<$. Let $\mathcal{S}(<,I) \subseteq \C[x_1,\ldots,x_n]$ denote the set of \textbf{standard monomials with respect to $I$ and $<$}, i.e. the monomials not contained in $\init_<(I)$.
It is well-known that the projection onto $ \C[x_1,\ldots,x_n]	\big/	I$ of the monomials $\mathcal{S}(<,I)$ form a vector space basis for $ \C[x_1,\ldots,x_n]	\big/	I$.  Finally, recall that the \textbf{Gr\"obner region} $GR(I) \subseteq \R^n$ of an ideal $I$ is the set of $u \in \Q^n$ such that there exists a monomial order $<$ such that $\init_<(\init_u(I)) = \init_<(I)$.  
We have the following theorem, which motivates the current manuscript.

\begin{theorem}\label{theorem:kavehmanon theorem 4} (\cite[Propositions 4.2 and 4.8]{KavehManon-published}) 
Following the notation in this section, let $C \subset \mathcal{T}(I)$ be a maximal-dimensional prime cone. Let $\{u_1, \ldots, u_{d+1}\} \subset C$ be a collection of rational vectors which span a real vector space of dimension $d+1 = dim(C)$.  Let $M \in \mathrm{Mat}((d+1) \times n, \Q)$ be the $(d+1) \times n$ matrix whose row vectors are $u_1, \ldots, u_{d+1}$. Let $\nu_M: A \setminus \{0\} \to \Q^{d+1}$ denote the corresponding weight quasivaluation. Then $\nu_M$ is a valuation, and the following hold: 

\begin{enumerate} 
\item $gr_{\nu_M}(A) \cong \C[x_1, \ldots, x_n]/ \init_M(I)$ as $\Q^{d+1}$-graded algebras, 
\item  If $C$ lies in the Gr\"obner region of $I$, the valuation $\nu_M$ has an adapted basis which can be taken to be the projection via $\pi$ of the standard monomial basis $\mathcal{S}(<,I)$ for a maximal cone $C_<$ in the Gr\"obner fan of $I$ containing $C$. \qedhere
\end{enumerate} 
\end{theorem}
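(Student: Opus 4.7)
The plan is to proceed in three stages. First, identify the ideal $\init_M(I)$ cut out by the multi-weight with the initial ideal $\init_C(I)$ of the cone $C$; this relies on a standard Gr\"obner-theoretic observation that, because the rows $u_1,\ldots,u_{d+1}$ span $C$ as an $\R$-vector space, the simultaneous minimization condition defining $\init_M(p)$ coincides with $\init_w(p)$ for suitably chosen $w$ in the relative interior of $C$. Consequently $\init_M(I) = \init_C(I)$, and in particular is prime by the hypothesis on $C$.

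Second, I would establish the $\Q^{d+1}$-graded algebra isomorphism
\[
\phi: \C[x_1,\ldots,x_n]/\init_M(I) \longrightarrow gr_{\nu_M}(A), \qquad x^\alpha + \init_M(I) \longmapsto [\pi(x^\alpha)] \in F_{\nu_M \succeq M\alpha}/F_{\nu_M \succ M\alpha}.
\]
Well-definedness of $\phi$ uses that for any $f \in I$, the decomposition $f = \init_M(f) + (f - \init_M(f))$ with $\tilde{\nu}_M(f - \init_M(f)) \succ \tilde{\nu}_M(f)$, combined with $\pi(f) = 0$, forces $\pi(\init_M(f))$ to lie in $F_{\nu_M \succ \tilde{\nu}_M(f)}$. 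Surjectivity is immediate from the construction of the pushforward filtration. For injectivity, if $\pi(x^\alpha) \in F_{\nu_M \succ M\alpha}$, one constructs $g \in \C[x_1,\ldots,x_n]$ with $\pi(g) = \pi(x^\alpha)$ and every term of strictly greater $M$-weight; then $x^\alpha - g \in I$ has $\init_M$ equal to $x^\alpha$, placing $x^\alpha$ in $\init_M(I)$. Since the left-hand side is a domain by step one, so is $gr_{\nu_M}(A)$, and this is precisely the condition that upgrades the quasivaluation $\nu_M$ to a valuation: the multiplicativity $\nu_M(fg) = \nu_M(f)+\nu_M(g)$ now follows from the absence of zero divisors in the associated graded ring.

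Third, for the adapted basis claim in part (2), I would choose a monomial order $<$ with $C \subseteq C_<$, which exists by the Gr\"obner region hypothesis. The general equality $\init_<(\init_C(I)) = \init_<(I)$ (a standard property of faces of the Gr\"obner fan) implies that $\mathcal{S}(<,I)$ is also a $\C$-basis for $\C[x_1,\ldots,x_n]/\init_C(I)$; transporting this basis through $\phi$ shows that $\pi(\mathcal{S}(<,I))$ is a $\C$-basis of $gr_{\nu_M}(A)$. Since the projection is already a $\C$-basis of $A$ itself, it is an adapted basis for the valuation $(A, \nu_M)$ by definition.

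The most delicate point I anticipate is the injectivity argument in step two, since the maximum in the definition of the pushforward quasivaluation need not be attained a priori, and one must exploit the $M$-homogeneous structure of $\init_M(I)$ carefully to control lifts along $\pi$. More conceptually, the primality of $\init_C(I)$ is the key input bridging the tropical hypothesis on $C$ with the Newton-Okounkov framework: without it, the identical construction produces only a quasivaluation, and the nontrivial content of the theorem is precisely that this primality hypothesis forces the quasivaluation to be genuinely multiplicative.
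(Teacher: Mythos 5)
This theorem is quoted from Kaveh--Manon (\cite[Propositions 4.2 and 4.8]{KavehManon-published}) and the paper supplies no proof of its own, so the only comparison available is with the cited source. Your reconstruction is correct and follows essentially the same route as that source: identifying $\init_M(I)$ with $\init_C(I)$ via a perturbation $w=\sum \epsilon_i u_i$ in the relative interior of $C$, building the $\Q^{d+1}$-graded isomorphism onto $gr_{\nu_M}(A)$ from the pushforward filtration (your injectivity argument, run on arbitrary $M$-homogeneous elements rather than just monomials, is exactly the right fix for the non-attained-maximum worry), using primality of $\init_C(I)$ to make the associated graded a domain and hence upgrade the quasivaluation to a valuation, and deducing the adapted basis from $\init_<(\init_C(I))=\init_<(I)$.
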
 

\begin{remark} 
By an argument similar to \cite[Proposition 1.12]{Sturmfels}, if $I$ is a homogeneous ideal then its Gr\"obner region equals $\Q^n$, so in our case, the hypothesis in item (2) above always holds. 
\end{remark}

From item (1) of Theorem~\ref{theorem:kavehmanon theorem 4} it follows from basic tropical theory that 
the value semigroup $S(A,\nu_M)$ (which is the semigroup of the toric variety corresponding to $\C[x_1,\ldots,x_n]/\init_C(I)$) is generated by the column vectors of the matrix $M$, and also that the associated Newton-Okounkov body $\Delta(A,\nu_M)$ can be explicitly computed as
\begin{equation}\label{eq: NOBY as convex hull} 
\Delta(A, \nu_M) = \textup{ convex hull of the columns of } M.
\end{equation}

The results above suggest that there should be a straightforward relationship between the Newton-Okounkov bodies associated to two maximal-dimensional prime cones $C_1$ and $C_2$ in $\mathcal{T}(I)$ if they are \emph{adjacent} in $\mathcal{T}(I)$, i.e., they share a codimension-$1$ face $C := C_1 \cap C_2$. 
The goal of this manuscript is to describe such a ``wall-crossing phenomenon'' for Newton-Okounkov bodies and to work out the case of the Grassmannians $Gr(2,m)$. The first main result is Theorem~\ref{theorem:main} below. 
To state the theorem, we need some preparation. 
For $C, C_1$ and $C_2$ as above, fix, once and for all, a linearly independent set $\{u_1, u_2, \ldots, u_d\}$ of integral vectors contained in $C$. In particular, $\{u_1, \ldots, u_d\}$ span a real vector space of dimension $d=\dim_\R(C)$.  
We also fix a total order $\prec$ satisfying \eqref{eq:total order with degree}.
We may assume that $u_1$ is chosen to be the vector $(1,1,\ldots,1)$ (this is possible because the ideal $I$ is homogeneous); this ensures that the corresponding weight valuation is homogeneous. We also fix integral vectors $w_1 \in C_1$ and $w_2 \in C_2$ such that $w_1 + \sum_j u_j$ (respectively $w_2 + \sum_j u_j$) lies in the interior of $C_1$ (respectively $C_2$). 
Let $M$ be the $d \times n$ matrix whose $j$-th row is the vector $u_j$ chosen above, and let $M_1$ (respectively $M_2$) denote the $(d +1) \times n$ matrix whose top $d$ rows are the same as those in $M$ and whose bottom $(d+1)$-st row is equal to $w_1$ (respectively $w_2$).

Let $\nu_{M_1}, \nu_{M_2}$ and $\nu_M$ be the corresponding weight quasivaluations on $A$. 
Theorem~\ref{theorem:kavehmanon theorem 4} implies that $\nu_{M_1}, \nu_{M_2}$ are valuations.
Although we remarked above that $\nu_M$ for arbitrary $M$ need not be a valuation, for $M$ chosen as in our setting, we will prove in Lemma~\ref{lemma: projection of valuation} that $\nu_M=\piod\circ\nu_{M_i}$ for $i=1,2$. 
It can be deduced that $\nu_M$ is also a valuation from the fact that $\nu_{M_i}$ are valuations and $\piod$ is a linear projection to the first $d$ coordinates.

\begin{theorem}\label{theorem:main} 
Let $A = \oplus_k A_k$ be a positively graded algebra over $\C$, and assume $A$ is an integral domain and has Krull dimension $d+1$.
 Let $\mathcal{B} = \{b_1, \ldots, b_n\}$ be a subset of $A_1$ (the homogeneous degree $1$ elements of $A$) which generate $A$ as an algebra. 
Let $I$ be the homogeneous ideal such that the presentation induced by $\mathcal{B}$ is $A \cong \C[x_1, \ldots, x_n]/I$ (as in \eqref{eq:surjection to A}), and let $\mathcal{T}(I)$ denote its tropicalization. 
Suppose that $C_1$ and $C_2$ are two maximal-dimensional prime cones in $\mathcal{T}(I)$ 
that share a codimension-1 face $C$.
Let $M_1$, $M_2$, and $M$ be the matrices described above and $\nu_{M_1}, \nu_{M_2}$ and $\nu_M$ the corresponding weight valuations on $A$. 
Let $\Delta(A, \nu_{M_1}) \subseteq \{1\} \times \R^d$, $\Delta(A, \nu_{M_2}) \subseteq \{1\} \times \R^d$ and $\Delta(A, \nu_{M}) \subseteq \{1\} \times \R^{d-1}$ denote the corresponding Newton-Okounkov bodies. Let $\piod: \R^{d+1} \to \R^d$ denote the linear projection $\R^{d+1} \to \R^d$ obtained by deleting the last coordinate. 
Then
	\[
	\piod(\Delta(A,\nu_{M_1})) = \piod(\Delta(A,\nu_{M_2})) = \Delta(A, \nu_M)
	\]
	and for any $\xi \in \Delta(A,\nu_M)$, the Euclidean lengths of the fibers $\piod^{-1}(\xi) \cap \Delta(A,\nu_{M_1})$ and $\piod^{-1}(\xi) \cap \Delta(A,\nu_{M_2})$ are equal, up to a global constant which is independent of $\xi$. 
Moreover, there exist two piece-wise linear identifications $\mathsf{S}_{12}: \Delta_{M_1} \to \Delta_{M_2}$ and $\mathsf{F}_{12}: \Delta_{M_1} \to \Delta_{M_2}$, called the ``shift map'' and the ``flip map'' respectively, which have the following properties: for $\Phi_{12} \in \{\mathsf{S}_{12}, \mathsf{F}_{12}\}$, we have that 
the diagram 
\[
\xymatrix{
\Delta(A,\nu_{M_1}) \ar[rd]_{\piod} \ar[rr]^{\Phi_{12}} && \Delta(A,\nu_{M_2}) \ar[ld]^{\piod} \\
 & \Delta(A,\nu_M)
}
\]
commutes, and $\Phi_{12}$ preserves the Euclidean lengths of the fibers of $\piod$. 
\end{theorem}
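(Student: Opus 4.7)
I would organize the proof in three stages. The first---equality of projections---is essentially automatic from the forthcoming Lemma~\ref{lemma: projection of valuation}, which identifies $\nu_M = \piod \circ \nu_{M_i}$. This gives $S(A,\nu_M) = \piod(S(A,\nu_{M_i}))$, so $\cone(S(A,\nu_M)) = \piod(\cone(S(A,\nu_{M_i})))$; slicing with the hyperplane $\{x_1=1\}$ (which $\piod$ preserves, since it deletes only the last coordinate) yields $\piod(\Delta(A,\nu_{M_i})) = \Delta(A,\nu_M)$ for $i=1,2$.

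The heart of the proof is the fiber-length equality. I would use the explicit description from~\eqref{eq: NOBY as convex hull}: $\Delta(A,\nu_{M_i}) = \conv(\text{columns of } M_i)$. Writing the $j$-th column of $M_i$ as $(v_j, w_{i,j})$, where $v_j$ is the $j$-th column of $M$, a point $(\xi, y) \in \Delta(A,\nu_{M_i})$ corresponds to a vector $t \in \R^n_{\geq 0}$ with $Mt = \xi$ (note $u_1 = (1,\ldots,1)$ forces $\sum_j t_j = \xi_1 = 1$) and $y = w_i \cdot t$. Hence the fiber over $\xi \in \Delta(A,\nu_M)$ is the interval
\begin{equation*}
\bigl[\min_{t \in \Delta_\xi} w_i \cdot t,\; \max_{t \in \Delta_\xi} w_i \cdot t\bigr], \qquad \Delta_\xi := \{t \in \R^n_{\geq 0} : Mt = \xi\}.
\end{equation*}
The crucial geometric input is that $w_1$ and $w_2$ lie in the interiors of $C_1$ and $C_2$ respectively, which sit on opposite sides of the hyperplane $\mathrm{span}(C) = \mathrm{span}(u_1, \ldots, u_d) \subset \R^{d+1}$. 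Thus $\{u_1, \ldots, u_d, w_1\}$ is a basis of $\R^{d+1}$, and I can uniquely write $w_2 = \alpha w_1 + \sum_{k=1}^d c_k u_k$ with $\alpha < 0$ and $c_k \in \Q$. Since $u_k \cdot t = \xi_k$ is constant on $\Delta_\xi$, this yields $w_2 \cdot t = \alpha (w_1 \cdot t) + \sum_k c_k \xi_k$; taking $\max - \min$ shows the fiber length in $\Delta(A,\nu_{M_2})$ equals $|\alpha|$ times that in $\Delta(A,\nu_{M_1})$, a global multiplicative constant independent of $\xi$.

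With the fiber-length equality in hand, the wall-crossing maps are straightforward to define. Let $a_i(\xi), b_i(\xi)$ denote the minimum and maximum of the last coordinate on the $\piod$-fiber over $\xi$; these are piecewise-linear functions of $\xi$ by standard convex geometry applied to the polytope $\Delta(A,\nu_{M_i})$. Setting
\begin{equation*}
\mathsf{S}_{12}(\xi, y) := \bigl(\xi,\; a_2(\xi) + |\alpha|(y - a_1(\xi))\bigr) \quad \text{and} \quad \mathsf{F}_{12}(\xi, y) := \bigl(\xi,\; b_2(\xi) - |\alpha|(y - a_1(\xi))\bigr),
\end{equation*}
both maps commute with $\piod$ by construction and are piecewise linear since $a_i, b_i$ are. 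The fiber-length identity $b_2(\xi) - a_2(\xi) = |\alpha|(b_1(\xi) - a_1(\xi))$ then guarantees that each sends $[a_1(\xi), b_1(\xi)]$ bijectively onto $[a_2(\xi), b_2(\xi)]$, with the shift preserving fiber orientation and the flip reversing it. The principal obstacle I anticipate is in the second stage, specifically the clean verification that $\alpha < 0$ and that the $u_k$-contributions to $w_2 \cdot t$ drop out uniformly in $\xi$; this is where the hypothesis that $C = C_1 \cap C_2$ is a codimension-1 shared face genuinely enters. The paper's cited route through variation of GIT quotients presumably gives a more conceptual geometric derivation---consistent with the complexity-one $T$-variety viewpoint of the Appendix---but the convex-geometric argument sketched above should suffice at the polytope level.
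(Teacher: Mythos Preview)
Your first and third stages are fine and essentially match the paper. The genuine gap is in the second stage, and it is fatal to the convex-geometric argument as written.

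The vectors $u_1,\ldots,u_d,w_1,w_2$ are the \emph{rows} of $M_1,M_2$, so they live in $\R^n$, not in $\R^{d+1}$. The cones $C_1$ and $C_2$ are $(d+1)$-dimensional cones inside $\R^n$, and although they share the $d$-dimensional face $C$, there is no reason whatsoever for $\langle C_1\rangle$ and $\langle C_2\rangle$ to coincide as linear subspaces of $\R^n$. Consequently your key decomposition $w_2 = \alpha w_1 + \sum_k c_k u_k$ need not exist. Concretely, in the paper's own Example~\ref{example: Gr24} one has $n=6$, $d=4$, $w_1=(1,0,0,0,0,1)$, $w_2=(0,0,1,1,0,0)$; attempting to solve $w_2 = \alpha w_1 + \sum_{k=1}^4 c_k u_k$ componentwise forces $c_2=1$ (from columns 2 and 4) and then column 5 reads $0 = c_1 + c_2 + c_4 = 2$, a contradiction. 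So the linear functional $t\mapsto w_2\cdot t$ is \emph{not} an affine function of $t\mapsto w_1\cdot t$ on the fiber polytope $\Delta_\xi$, and the ``$|\alpha|$'' argument collapses.

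This is precisely why the paper does not argue at the polytope level. Its proof of the fiber-length equality (Theorem~\ref{theorem: fiber lengths equal}) is genuinely geometric: both toric degenerations $X_1,X_2$ are realized as Gr\"obner degenerations of the \emph{same} complexity-one $T$-variety $Y=\Proj(\C[x]/\init_M(I))$ via Lemma~\ref{lemma: initial ideals coincide}, and then Hu's variation-of-GIT result shows that the degree of $X_i\mod_{m\xi} T$ equals that of $Y\mod_{m\xi} T$ for each $i$, hence the two agree. The fiber length is recovered from this degree (Lemma~\ref{lemma: fibers are geometric}), up to a global normalization constant coming from the lattice index of $G(S_i)$ in the last coordinate. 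The equality you are trying to prove is thus a statement about flat families, not about a linear relation between $w_1$ and $w_2$; your convex-geometric shortcut does not see the algebra of $\init_M(I)$ at all, which is where the content lies.
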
 

\begin{remark}\label{remark: the global constant} 
The global constant appearing in Theorem~\ref{theorem:main} above depends only on the choices of the matrices $M_1, M_2$ and $M$ which represent the cones $C_1, C_2$ and $C$ respectively, which is why the constant is independent of the choice of basepoint $\xi \in \Delta(A,\nu_M)$. 
\end{remark} 

\begin{example}\label{ex: main example}
We illustrate Theorem~\ref{theorem:main} in an example which is explained in detail in Section~\ref{sec:wall-crossing}. In this example we can see explicitly that the lengths of the fibers under ${\sf p}_1$ and ${\sf p}_2$ are the same length; see Figure~\ref{fig: fibers}. 
	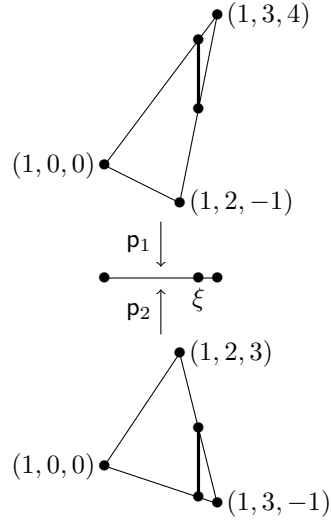
\begin{figure}[h]
	\begin{tikzpicture}\begin{scope}[scale=.5]
	\draw (0,0) node {$\bullet$} node[left] {$(1,0,0)$}--(2,-1) node {$\bullet$} node[right] {$(1,2,-1)$}--(3,4) node {$\bullet$} node[right] {$(1,3,4)$}--(0,0);
	%\draw (1,0) node {$\bullet$};
	\draw[->] (1.5,-1.5) --node[left] {${\sf p}_1$}(1.5,-2.7);
	\draw (0,-3) node {$\bullet$}--(3,-3) node {$\bullet$};
	\draw[<-] (1.5,-3.3) --node[left] {${\sf p}_2$}(1.5,-4.5);
	\draw (0,-8) node {$\bullet$} node[left] {$(1,0,0)$}--(2,3-8) node {$\bullet$} node[right] {$(1,2,3)$}--(3,-1-8) node {$\bullet$} node[right] {$(1,3,-1)$}--(0,-8);
	\draw (2.5,-3) node {$\bullet$} node[below] {$\xi$};
	\draw [very thick] (2.5,3.33333333333) node {$\bullet$}--(2.5,1.5) node{$\bullet$};
	\draw [very thick] (2.5,-8-0.83333333333) node {$\bullet$}--(2.5,-8+1) node{$\bullet$};
	%\draw (1,0) node {$\bullet$};
	\end{scope}
	\end{tikzpicture}
	\caption{Two polytopes projecting onto a common interval, and their fibers under the projection maps.}\label{fig: fibers}
	\end{figure}
\end{example}

The next two sections are devoted to a proof of Theorem~\ref{theorem:main}.

%%%%%%%%%%%%%%%%%%%%%
%  Section - Fiber lengths
%%%%%%%%%%%%%%%%%%%%%

\section{The fiber lengths are equal}\label{sec:fibers}

The purpose of this section is to prove the first half of Theorem~\ref{theorem:main}. Specifically, we show in Lemma~\ref{lemma: NOBY projects to NOBY} that we have a diagram 
\begin{equation}\label{eq: projection diagram}
\xymatrix{
\Delta(A,\nu_{M_1}) \ar[rd]_{\piod}  && \Delta(A,\nu_{M_2}) \ar[ld]^{\piod} \\
 & \Delta(A,\nu_M)
}
\end{equation}
relating the $3$ polytopes;
then in Theorem~\ref{theorem: fiber lengths equal} we show the second assertion of Theorem~\ref{theorem:main}, namely, that the fiber lengths are equal (up to a global constant -- cf. Remark~\ref{remark: the global constant}). Theorem~\ref{theorem: fiber lengths equal} is the substantive assertion of Theorem~\ref{theorem:main}, and our argument uses a variation of GIT quotients. 
In addition to the projection $\piod$, we will also use $\pio:\R^{d+1}\to\R$, the projection which maps onto the first coordinate with respect to the standard basis.

\begin{lemma}\label{lemma: NOBY projects to NOBY}
Following the notation in this section, the images under the projection $\piod: \R^{d+1} \to \R^{d}$ of $\Delta(A,\nu_{M_1})$ and $\Delta(A,\nu_{M_2})$ are the same and are equal to the Newton-Okounkov body associated to $\nu_M$, i.e. 
\[
\piod(\Delta(A,\nu_{M_1})) = \piod(\Delta(A,\nu_{M_2})) = \Delta(A,\nu_M).
\]
\end{lemma}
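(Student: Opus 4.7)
The plan is to reduce this lemma to the factorization $\nu_M = \piod \circ \nu_{M_i}$ for $i=1,2$, which the authors have announced will be proved as Lemma~\ref{lemma: projection of valuation}. Once this factorization is in hand, the statement of Lemma~\ref{lemma: NOBY projects to NOBY} is a formal consequence of the general fact that linear maps respect the formation of value semigroups, their cones, and slices by coordinate hyperplanes.

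More precisely, I would proceed in three short steps. First, since the value semigroup is by definition the image of the valuation, $\nu_M = \piod \circ \nu_{M_i}$ yields immediately
\[
S(A,\nu_M) \;=\; \piod\bigl(S(A,\nu_{M_i})\bigr), \qquad i=1,2.
\]
Second, because $\piod$ is a $\Q$-linear map, it commutes with the formation of non-negative $\R$-linear combinations, so
\[
\cone\bigl(S(A,\nu_M)\bigr) \;=\; \cone\bigl(\piod(S(A,\nu_{M_i}))\bigr) \;=\; \piod\bigl(\cone(S(A,\nu_{M_i}))\bigr).
\]
Third, since $\piod$ deletes only the last coordinate and so preserves the first coordinate, it restricts to a bijection between the affine hyperplanes $\{x_1=1\} \subseteq \R^{d+1}$ and $\{x_1=1\}\subseteq \R^{d}$. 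Using Definition~\ref{definition: NOBY}, we therefore get
\[
\piod\bigl(\Delta(A,\nu_{M_i})\bigr) \;=\; \piod\Bigl(\{x_1=1\} \cap \cone(S(A,\nu_{M_i}))\Bigr) \;=\; \{x_1=1\} \cap \cone(S(A,\nu_M)) \;=\; \Delta(A,\nu_M),
\]
which completes the proof for both $i=1$ and $i=2$.

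All of the substantive content has been deferred to the forthcoming Lemma~\ref{lemma: projection of valuation}, where one must verify that the pushforward of the weight quasivaluation $\tilde\nu_{M_i}$ on $\C[x_1,\ldots,x_n]$ through $\pi$ really does compose correctly with $\piod$ to give the pushforward of $\tilde\nu_M$. This requires a compatibility check between the total order $\prec$ on $\Q^{d+1}$ defined by \eqref{eq:total order with degree} and its analogue on $\Q^d$: namely, for exponent vectors $\alpha,\beta$, the minimum in $\min\{M_i\alpha\}$ projects to the minimum in $\min\{M\alpha\}$ (with ties on the first $d$ coordinates producing equal values after applying $\piod$). That order-theoretic verification is where the only real work lies; given it, the present lemma is simply a matter of pushing a commuting square through the constructions $S(-)$, $\cone(-)$, and $\{x_1=1\}\cap(-)$.
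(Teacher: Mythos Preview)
Your proposal is correct and essentially identical to the paper's own proof: both invoke Lemma~\ref{lemma: projection of valuation} to get $\piod(S(A,\nu_{M_i}))=S(A,\nu_M)$, use linearity of $\piod$ to commute it with $\cone(-)$, and then use that $\piod$ preserves the first coordinate to commute it with the slice $\{x_1=1\}$. One small slip: $\piod$ does not restrict to a \emph{bijection} between the hyperplanes $\{x_1=1\}\subset\R^{d+1}$ and $\{x_1=1\}\subset\R^d$ (it is a surjection with one-dimensional fibers), but only the fact that $\piod$ preserves the first coordinate is actually needed, and you state and use that correctly.
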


For the proof of Lemma~\ref{lemma: NOBY projects to NOBY} the following is useful (see \cite[Lemma 3.2]{KavehManon-published} and remarks following). In analogy to the classical Gr\"obner theory, we say the \textbf{(rank $r$) Gr\"obner region} $GR^r(I) \subseteq \R^{r \times n}$ is the set of $M$ such that there exists a monomial order $<$ with $\init_<(\init_M(I)) = \init_<(I)$.

\begin{lemma}\label{lemma: definition nuMf}
Following the above notation, for any $f \in \C[x_1,\ldots,x_n]/I$ we have  
\begin{equation}\label{eq: definition nuMf}
\nu_M(f) = \max \{ \tilde{\nu}_M(\tilde{f}) \, \mid \, \tilde{f} \in \C[x_1,\ldots,x_n] \textup{ and } \pi(\tilde{f}) = f \}.
\end{equation}
Moreover, in our setting, maximum on the RHS of the above equation is always attained. 
\end{lemma}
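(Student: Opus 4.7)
The first equality unpacks the definition of $\nu_M$. By construction $\nu_M$ arises from the pushforward filtration $F_a = \pi(\tilde{F}_{\tilde{\nu}_M \succeq a})$, so $f \in F_a$ is equivalent to the existence of a lift $\tilde{f}$ with $\pi(\tilde{f}) = f$ and $\tilde{\nu}_M(\tilde{f}) \succeq a$. Consequently
\[
\nu_M(f) \;=\; \max\{ a \in \Q^r : f \in F_a \} \;=\; \sup\{\, \tilde{\nu}_M(\tilde{f}) : \pi(\tilde{f}) = f \,\},
\]
the supremum being with respect to the fixed total order on $\Q^r$. Since any $\tilde{f}$ witnessing $f \in F_a$ has $\tilde{\nu}_M(\tilde{f}) \succeq a$ and is itself a member of the set whose supremum is being taken, a supremum, once attained, coincides with the claimed maximum. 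Thus the substantive point is attainment.

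For attainment, the plan is to exploit the Gr\"obner region hypothesis, which in our setting is automatic because $I$ is homogeneous (cf.\ the Remark following Theorem~\ref{theorem:kavehmanon theorem 4}). Concretely, I would choose a monomial order $<$ on $\C[x_1,\ldots,x_n]$ compatible with $M$ in the sense that $\init_<(\init_M(I)) = \init_<(I)$, and let $\mathcal{S} = \mathcal{S}(<, I)$ denote the associated standard monomials. Since $\pi(\mathcal{S})$ is a $\C$-basis of $A$, each $f \in A$ has a canonical lift $\tilde{f}_0 \in \C[x_1,\ldots,x_n]$ expressed as a $\C$-linear combination of elements of $\mathcal{S}$, and I expect to show that $\tilde{f}_0$ realizes the supremum.

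The key compatibility step is the following: no nonzero $\C$-linear combination of standard monomials can lie in $\init_M(I)$. Indeed, the $<$-leading term of such an element would be a standard monomial that simultaneously belongs to $\init_<(\init_M(I)) = \init_<(I)$, which is impossible by definition of standard monomial. Applied to $\init_M(\tilde{f}_0)$ -- which is itself a $\C$-linear combination of standard monomials, namely those of minimum $M$-weight appearing in $\tilde{f}_0$ -- this yields $\init_M(\tilde{f}_0) \notin \init_M(I)$. Finally, if some other lift $\tilde{f} = \tilde{f}_0 + h$ with $h \in I$ satisfied $\tilde{\nu}_M(\tilde{f}) \succ \tilde{\nu}_M(\tilde{f}_0)$, then the ultrametric property of $\tilde{\nu}_M$ would force $\tilde{\nu}_M(h) = \tilde{\nu}_M(\tilde{f}_0)$ and the cancellation of initial forms, i.e.\ $\init_M(\tilde{f}_0) = -\init_M(h) \in \init_M(I)$, contradicting the previous step. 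Hence $\tilde{\nu}_M(\tilde{f}_0)$ is the desired maximum.

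I expect the main obstacle to be the careful use of the compatibility $\init_<(\init_M(I)) = \init_<(I)$ -- it is exactly this Gr\"obner-region input that promotes what would otherwise be a purely formal calculation into a genuine attainment statement, and it is the reason the hypothesis on $I$ being homogeneous is invoked.
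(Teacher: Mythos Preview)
Your proposal is correct and follows the same logical skeleton as the paper's proof: the first identity is the definition of the pushforward quasivaluation (this is \cite[Lemma~3.2]{KavehManon-published}), and attainment hinges on the Gr\"obner-region input coming from homogeneity of $I$. The only difference is presentational: the paper argues by a chain of citations (homogeneous $\Rightarrow$ $M\in GR^r(I)$ by \cite[Lemma~8.7]{KavehManon-published} $\Rightarrow$ adapted basis exists by \cite[Proposition~3.3]{KavehManon-published} $\Rightarrow$ maximum attained by the remarks after \cite[Definition~2.27]{KavehManon-published}), whereas you unpack that chain into a self-contained argument showing directly that the standard-monomial lift $\tilde f_0$ realizes the maximum via the initial-form cancellation contradiction. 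Your version has the advantage of being self-contained and making transparent exactly where the compatibility $\init_<(\init_M(I))=\init_<(I)$ is used.
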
 

\begin{proof} 
The first claim is \cite[Lemma 3.2]{KavehManon-published}.
The second claim follows from \cite[Proposition 3.3, Lemma 8.7]{KavehManon-published} and the remarks following \cite[Definition 2.27]{KavehManon-published}.
\end{proof}

Using the above, we can explicitly compute $\nu_M$ as follows.

\begin{lemma} \label{lemma: projection of valuation}
Following the notation above,
$
\piod \circ \nu_{M_1} = \piod \circ \nu_{M_2} = \nu_M.
$
\end{lemma}

\begin{proof} 
The valuation $\tilde{\nu}_M$ is by definition a minimum, i.e. 
$\tilde{\nu}_M(\tilde{f}) = \mathrm{min} \{ M \alpha \, \mid \, c_\alpha \neq 0 \}$
for $\tilde{f} = \sum_\alpha c_\alpha x^\alpha$ and similarly for $\tilde{\nu}_{M_1}$ and $\tilde{\nu}_{M_2}$. Therefore, the formula~\eqref{eq: definition nuMf} is 
in fact a max-min formula. 
Moreover, by our assumption on $M_1, M_2$ and $M$ we know that $\piod(M_1 \alpha) = \piod(M_2 \alpha) = M \alpha$. 
To prove the lemma we first prove that 
\begin{equation}\label{eq: piod commutes with min and max} 
\piod(\min T) = \min \piod(T) \quad \textup{ and } \quad  \piod(\max T) = \max \piod(T)
\end{equation}
for any $T\subset \Z^{d+1}_{\geq 0}$ such that both $\min T$ and $\max T$ exist. Indeed, from the definition of the total order~\eqref{eq:total order with degree} we have that 
$a \preceq b$ implies $\piod(a) \preceq \piod(b)$
for any $a,b \in \Z^{d+1}_{\geq 0}$. Then it readily follows that if $T$ achieves its min (respectively max) then the left (respectively right) equation of~\eqref{eq: piod commutes with min and max} holds. 
Now suppose $\tilde{f} = \sum_{\alpha} c_{\alpha} x^{\alpha}\in\C[x_1,\ldots,x_n]$. For any $N \in \{M, M_1, M_2\}$ we define $T_{\tilde{f},N} := \{ N\alpha \, \mid \, c_\alpha \neq 0\}$. Since $T_{\tilde{f},N}$ is finite, it achieves both its minimum and maximum, and by definition $\tilde{\nu}_N(\tilde{f}):= \min T_{\tilde{f}, N}$. For $f \in \C[x_1,\ldots,x_n]/I$ and $i=1,2$, we define 
	\[
	T_{f,M_i}=\{ \min T_{\tilde{f},M_i} \, \mid \, \tilde{f} \in \C[x_1,\ldots,x_n] \textup{ and } \pi(\tilde{f}) = f \}.
	\]
By the last claim of Lemma~\ref{lemma: definition nuMf} we know that the maximum of $T_{f,M_i}$ is achieved for $i=1,2$, and therefore $\piod(\max T_{f,N}) = \max \piod(T_{f,N})$. As observed above, $T_{\tilde{f},N}$ also achieves its minimum, so that $\piod(\min T_{\tilde{f},n}) = \min \piod(T_{\tilde{f},N})$.
From the above we can compute that for $i=1,2$
	\begin{align*}
	\piod ( \nu_{M_i} (f))
	&=
	\piod (\max T_{f,M_i}) \quad \textup{ by~\eqref{eq: definition nuMf}} 
	\\&=
	\max \piod (T_{f,M_i}) \quad \textup{ since the max of $T_{f, M_i}$ is achieved} 
	\\&=
	\max \{ \piod (\min T_{\tilde{f},M_i})\, \mid \, \tilde{f} \in \C[x_1,\ldots,x_n] \textup{ and } \pi(\tilde{f}) = f   \}
	\quad \textup{ by definition of $T_{f,M_i}$} 
	\\&=
	\max \{ \min \piod (T_{\tilde{f},M_i})\, \mid \, \tilde{f} \in \C[x_1,\ldots,x_n] \textup{ and } \pi(\tilde{f}) = f   \}
	\quad \textup{ since $T_{\tilde{f}, M_i}$ is finite } 
	\\&=
	\max \{ \min T_{\tilde{f},M}\, \mid \, \tilde{f} \in \C[x_1,\ldots,x_n] \textup{ and } \pi(\tilde{f}) = f   \}
	\quad \textup{ since $\piod(M_i \alpha) = M\alpha$ for all $\alpha$} 
	\\&= 
	\max \{\tilde{\nu}_M(\tilde{f}) \, \mid \, \tilde{f} \in \C[x_1,\ldots,x_n] \textup{ and } \pi(\tilde{f}) = f \} 
	\quad \textup{ by definition of $\tilde{\nu}_M$} 
	\\ & = 
	\nu_{M} (f)
	\end{align*}
	as desired. 
\end{proof}

We can now prove Lemma~\ref{lemma: NOBY projects to NOBY}. 

\begin{proof}[Proof of Lemma~\ref{lemma: NOBY projects to NOBY}]
By Definition~\ref{definition: NOBY} we know $\Delta(A,\nu_{M_i}) = \cone(S(A,\nu_{M_i})) \cap (\{1\} \times \R^d)$ for $i=1,2$ and similarly for $\Delta(A,\nu_M)$. Since $\piod$ is a linear map, $\piod(\cone(S(A,\nu_{M_i}))) = \cone(\piod(S(A,\nu_{M_i})))$. Now by Lemma~\ref{lemma: projection of valuation} we know that $\piod(S(A,\nu_{M_i})) = S(A,\nu_M)$ for $i=1,2$. Hence, $\cone(\piod(S(A,\nu_{M_i}))) = \cone(S(A,\nu_M))$ for $i=1,2$. The projection $\piod$ preserves the first coordinate, so taking the level-$1$ slice commutes with $\piod$ and the statement follows. 
\end{proof}

We now wish to deduce a relationship between the fibers on the corresponding polytopes 
\[
\piod^{-1}(\xi) \cap \Delta(A,\nu_{M_1}) \quad \textup{ and } \quad \piod^{-1}(\xi) \cap \Delta(A,\nu_{M_2}). 
\]
for $\xi \in \Delta(A, \nu_M)$. 
An example was illustrated in Figure~\ref{fig: fibers}.
To facilitate this, we define functions $\mathcal{L}_1$ and $\mathcal{L}_2$ which record the lengths of these fibers, i.e.,  
\begin{equation}\label{eq: length of fiber function}
\mathcal{L}_i: \Delta(A,\nu_M) \to \R, \quad \xi \mapsto \len(\piod^{-1}(\xi) \cap \Delta(A,\nu_{M_i}))
\end{equation}
for $i=1,2$, where $\len$ denotes the standard Euclidean length in $\R^{d+1}$ with respect to which each standard basis vector $\varepsilon_i$, $1\leq i \leq d+1$, has length $1$. 
Since any polytope is an intersection of finitely many affine half-spaces which are defined by linear inequalities, it is clear that both $\mathcal{L}_1$ and $\mathcal{L}_2$ are piecewise-linear.\footnote{A real-valued function on a polytope $\Delta$ is \emph{piecewise linear} if $\Delta$ can be written as a finite union of polytopes, on each of which $f$ is an affine function, i.e., it is a linear function plus a global translation.} 
With this notation in place, we can state the following.

\begin{theorem} \label{theorem: fiber lengths equal}
Let $\xi \in \Delta(A,\nu_M)$. Then the Euclidean lengths of $\piod^{-1}(\xi) \cap \Delta(A,\nu_M)$ and $\piod^{-1}(\xi) \cap \Delta(A,\nu_{M_2})$ are equal, up to a global constant which is independent of $\xi$. Equivalently, there exists a global constant $\kappa> 0$ such that $\kappa \mathcal{L}_1 = \mathcal{L}_2$ as piecewise linear functions on $\Delta(A,\nu_M)$. 
\end{theorem}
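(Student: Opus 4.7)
The plan is to exploit the convex-hull description of the Newton-Okounkov bodies coming from maximal-dimensional prime cones, reducing the fiber-length equality to a statement about linear functionals on a common ``fiber polytope''.

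By equation~\eqref{eq: NOBY as convex hull}, since $C_1, C_2$ are maximal-dimensional prime cones, we have $\Delta(A,\nu_{M_i}) = \mathrm{conv}(\mathrm{cols}(M_i))$ for $i=1,2$; and by Lemma~\ref{lemma: NOBY projects to NOBY}, $\Delta(A,\nu_M) = \mathrm{conv}(\mathrm{cols}(M))$. Writing
\[
\Lambda(\xi) := \{\lambda \in \R_{\geq 0}^n : M\lambda = \xi\}\cap\Delta^{n-1}
\]
for the corresponding fiber polytope (which is \emph{independent} of $i$), one checks directly that $\piod^{-1}(\xi)\cap\Delta(A,\nu_{M_i}) = \{w_i\cdot\lambda : \lambda\in\Lambda(\xi)\}$ is the interval obtained by evaluating $w_i$ on $\Lambda(\xi)$. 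In particular,
\[
\mathcal{L}_i(\xi) \;=\; \max_{\lambda\in\Lambda(\xi)}(w_i\cdot\lambda) \,-\, \min_{\lambda\in\Lambda(\xi)}(w_i\cdot\lambda)
\]
is the spread of the linear functional $w_i$ on the common polytope $\Lambda(\xi)$.

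With this reformulation, it suffices to find $\kappa\in\R\setminus\{0\}$ and $m\in\mathrm{rowspan}(M)$ such that $w_1 = \kappa w_2 + m$. Indeed, writing $m = \sum_k \alpha_k\cdot(\text{row }k\text{ of }M)$, one has $m\cdot\lambda = \sum_k \alpha_k\,\xi_k$ for every $\lambda\in\Lambda(\xi)$; hence $m\cdot\lambda$ depends only on $\xi$ and cancels from both the maximum and the minimum in the spread. This yields the desired identity $\mathcal{L}_1(\xi) = |\kappa|\,\mathcal{L}_2(\xi)$, with $|\kappa|$ a global constant depending only on the choice of matrices $M_1, M_2, M$.

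The crux of the proof is thus to establish the linear relation $w_1 \equiv \kappa w_2 \pmod{\mathrm{rowspan}(M)}$. Geometrically, since $\mathrm{rowspan}(M) = \mathrm{span}(C)$, the adjacent maximal prime cones $C_1, C_2$ project to rays in the quotient space $\R^n/\mathrm{rowspan}(M)$, and because $w_i\in C_i$, the images $\overline{w_1},\overline{w_2}$ lie on these two rays. The linear-dependence claim amounts to showing that these two rays span a common line in the quotient -- equivalently, that the wall-crossing from $C_1$ to $C_2$ across the codimension-$1$ face $C$ traces out a single $1$-parameter family of weight vectors. This is precisely the content of variation of GIT quotients as developed in~\cite{Hu}: the passage between the toric degenerations associated to $C_1$ and $C_2$ corresponds to crossing a single VGIT wall, which singles out the appropriate $1$-parameter family even in the presence of other maximal cones of $\mathcal{T}(I)$ meeting along $C$. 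The main obstacle is to make this VGIT interpretation of the wall-crossing precise and rigorous in the present algebraic setting; once in place, the linear relation between $w_1$ and $w_2$ modulo $\mathrm{rowspan}(M)$ follows, and the proportionality of $\mathcal{L}_1$ and $\mathcal{L}_2$ is immediate from the LP formula above.
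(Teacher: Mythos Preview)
Your LP reformulation is correct and elegant: writing $\Lambda(\xi)=\{\lambda\in\Delta^{n-1}:M\lambda=\xi\}$, one indeed has $\mathcal{L}_i(\xi)=\max_{\Lambda(\xi)}w_i-\min_{\Lambda(\xi)}w_i$, and the observation that this spread depends only on $w_i$ modulo $\mathrm{rowspan}(M)$ is valid. The problem is the next step: the linear relation $w_1\equiv\kappa w_2\pmod{\mathrm{rowspan}(M)}$ that you reduce to is \emph{false} in general, so the argument cannot be completed along these lines.

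A concrete counterexample is already the $Gr(2,4)$ case in Example~\ref{example: Gr24}. There $M$ is the $4\times 6$ matrix formed by the first four rows, $w_1=(1,0,0,0,0,1)$, $w_2=(0,0,1,1,0,0)$, and one computes $\ker(M)=\mathrm{span}\{v_1,v_2\}$ with $v_1=(-1,0,1,1,0,-1)$, $v_2=(-1,1,0,0,1,-1)$. The restrictions of $w_1,w_2$ to $\ker(M)$ are, in this basis, the functionals $(-2,-2)$ and $(2,0)$; these are not proportional, so no relation $w_1-\kappa w_2\in\mathrm{rowspan}(M)$ with $\kappa\neq 0$ exists. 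More conceptually, there are \emph{three} maximal prime cones of $\mathcal{T}(I_{2,4})$ meeting along $C$ (one for each trivalent tree on $[4]$), and in the $2$-dimensional quotient $\R^6/\mathrm{rowspan}(M)$ they give three rays satisfying the tropical balancing condition, not two collinear ones. Your heuristic that ``crossing a single VGIT wall singles out a $1$-parameter family'' does not translate into the claimed linear dependence of $\overline{w_1}$ and $\overline{w_2}$ in the quotient; the VGIT input enters the paper's proof in a quite different way.

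The paper's argument does not attempt to compare $w_1$ and $w_2$ linearly. Instead it interprets $\mathcal{L}_i(\xi)$ (at rational interior $\xi$) as, up to a global constant $\kappa_i$, the degree of the GIT quotient $X_i\mod_{m\xi}T$ of the toric degeneration $X_i=\mathrm{Proj}(\C[x]/\init_{M_i}(I))$ by the codimension-one torus coming from the $M$-grading (Lemma~\ref{lemma: fibers are geometric}). Since both $X_1$ and $X_2$ are Gr\"obner degenerations of $Y=\mathrm{Proj}(\C[x]/\init_M(I))$, taking GIT quotients in the flat family (\cite{Hu}) yields $\deg(X_1\mod_{m\xi}T)=\deg(Y\mod_{m\xi}T)=\deg(X_2\mod_{m\xi}T)$, and piecewise-linearity finishes the job. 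The equality of fiber lengths is thus a genuinely geometric fact about the common degeneration $Y$, not a linear-algebraic relation between $w_1$ and $w_2$.
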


To prove this, we start with some preliminary observations. First, since the $\mathcal{L}_i, i=1,2$ are piecewise linear, it is straightforward that there exists a regular subdivision of $\Delta(A,\nu_M)$ such that both $\mathcal{L}_1$ and $\mathcal{L}_2$ are affine on each cell. 
With this in mind, the following lemma shows that to prove that $\kappa \mathcal{L}_1= \mathcal{L}_2$ it suffices to check equality on a suitable subset of points in $\Delta(A,\nu_M)$. 

\begin{lemma} \label{lemma: pwl equal enough for simplices}
Let $\Delta$ be an $m$-dimensional polytope, and let $f, g: \Delta \to \R$ be piecewise-linear functions on $\Delta$. 
Suppose there exist $Q_j \subseteq \Delta$ for $1 \leq j \leq N$ for some positive integer $N$ such that $\Delta = \cup_{j=1}^N Q_j$, where each $Q_j$ is a polytope and both $f$ and $g$ are affine on $Q_j$ for each $j=1, \dots, N$. Suppose that, for each $j$, $1 \leq j \leq N$, there exist a set of $m+1$ points $\{x_{j, 1}, x_{j,2}, \cdots, x_{j, m+1}\} \subseteq Q_j$ whose convex hull is an $m$-simplex, and such that $f(x_{j, k}) = g(x_{j,k})$ for all $k, 1 \leq k \leq m+1$. Then $f=g$ on $\Delta$. In particular, to check equality of $f$ and $g$ above, it suffices to check, for each $Q_j$, the equality $f(x)=g(x)$ for $x$ in a dense subset of any open $m$-ball of positive radius contained $Q_j$. 
\end{lemma}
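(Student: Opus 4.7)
The plan is to reduce the lemma to the standard fact that an affine function on $\R^m$ is uniquely determined by its values on any collection of $m+1$ affinely independent points. The hypothesis that the convex hull of $\{x_{j,1}, \ldots, x_{j,m+1}\}$ is an $m$-simplex is precisely the statement that these points are affinely independent, so their affine span is all of $\R^m$ (equivalently, the full ambient space of the $m$-dimensional polytope $\Delta$).

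First I would work cell by cell. Fix $j$ and consider the restrictions $f|_{Q_j}$ and $g|_{Q_j}$, which by hypothesis are affine. The difference $h := f|_{Q_j} - g|_{Q_j}$ is then affine on $Q_j$ and extends (uniquely) to an affine function $\tilde{h}$ on the affine hull of $Q_j$. By hypothesis $\tilde{h}$ vanishes on the $m+1$ affinely independent points $x_{j,1},\ldots, x_{j,m+1}$. Since any affine function on $\R^m$ vanishing on $m+1$ affinely independent points is identically zero, we get $\tilde{h}\equiv 0$, and in particular $f = g$ on $Q_j$. Since this holds for each $j$ and $\Delta = \cup_{j=1}^N Q_j$, we conclude $f = g$ on $\Delta$.

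For the final ``in particular'' assertion, suppose that on each $Q_j$ we only know that $f = g$ on a dense subset $D_j$ of some open $m$-ball $B_j$ contained in $Q_j$. Since $f$ and $g$ are both affine (hence continuous) on $Q_j$, the coincidence set $\{f = g\} \cap Q_j$ is closed in $Q_j$; density of $D_j$ in $B_j$ then yields $f = g$ on all of $B_j$. Any open $m$-ball in $\R^m$ contains $m+1$ affinely independent points (for instance, a regular $m$-simplex of sufficiently small diameter centered at the ball's center), so the hypothesis of the first part of the lemma is satisfied on each $Q_j$, and the conclusion follows.

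There is no substantive obstacle: the argument is purely elementary affine geometry combined with continuity. The only conceptual point worth highlighting is that the phrase ``convex hull is an $m$-simplex'' in the hypothesis should be unpacked as ``the $m+1$ points are affinely independent,'' as this is what enables the determination of affine functions by their values.
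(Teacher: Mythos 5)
Your proof is correct and follows essentially the same route as the paper's: reduce to the fact that an affine function on an $m$-dimensional cell is determined by its values at $m+1$ affinely independent points, and handle the ``in particular'' clause by finding such points inside any open ball (the paper places the simplex vertices directly in the dense subset, while you first use continuity to upgrade density to equality on the whole ball; both work). No gaps.
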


\begin{proof}
For the first statement, it suffices to check equality on each $Q_j$ where $\mathcal{L}_1, \mathcal{L}_2$ are affine. Choose a $j$, $1 \leq j \leq N$. Since $Q_j$ is $m$-dimensional, an affine function on $Q_j$ is determined by its values on $m+1$ affinely independent vectors in $Q_j$. Since a set of $m+1$ points whose convex hull is an $m$-simplex must be affinely independent, the result follows. For the last statement, note that any open ball contains an $m$-simplex, as long as the simplex is small enough, and it is clear that the vertices can be arranged to lie in the dense subset. 
\end{proof}

For the rest of the section we use the notation $S_i:=S(A,\nu_{M_i})$ and $S:=S(A,\nu_M)$. By assumption on the $M_i$ and $M$, the semigroups $S_i$ and $S$ are contained in $\Z^{d+1}$ and $\Z^d$ respectively. 
We will also use $\Delta(S)$ (resp. $\Delta(S_i)$)  to denote $\Delta(A,\nu_M)$ (resp. $\Delta(A,\nu_{M_i})$).
Denote by $G(S)$ (resp. $G(S_i)$) the group generated by $S$ (resp. $S_i$).
The starting point of our argument is to observe that for appropriately chosen $\xi$, the Euclidean lengths of the fibers
$\piod^{-1}(\xi) \cap \Delta(S_i)$ have a geometric interpretation; this is the content of Lemma~\ref{lemma: fibers are geometric} below. We need some preparation. Let $w_1, w_2$ be the integral vectors which were chosen before the statement of Theorem~\ref{theorem:main}.

\begin{lemma}\label{lemma: initial ideals coincide} 
 $\init_{w_i}(\init_M(I)) = \init_{M_i}(I)$ for $i=1,2$.
 \end{lemma}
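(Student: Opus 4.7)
The plan is to reduce Lemma~\ref{lemma: initial ideals coincide} to a pointwise identity at the polynomial level and then promote this to the ideal level via a Gr\"obner-basis refinement argument.

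First, I would verify the pointwise identity: for every polynomial $f = \sum_{\alpha} c_\alpha x^\alpha \in \C[x_1,\ldots,x_n]$,
\[
\init_{M_i}(f) \;=\; \init_{w_i}(\init_M(f)).
\]
By construction, $M_i$ is obtained from $M$ by appending $w_i$ as a bottom row, and the total order on $\Q^{d+1}$ described in~\eqref{eq:total order with degree} is a lexicographic extension of the total order on $\Q^d$ by the new last coordinate (compared in standard real order). Hence $M_i\alpha$ is minimal among $\{M_i\beta : c_\beta \neq 0\}$ if and only if $M\alpha$ is minimal among $\{M\beta : c_\beta\neq 0\}$ in the $\Q^d$-order \emph{and}, among those surviving $\beta$, $w_i\cdot\alpha$ is minimal in the ordinary real sense. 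This is exactly the condition for $c_\alpha x^\alpha$ to survive in $\init_{w_i}(\init_M(f))$.

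To pass from polynomials to ideals, I would choose a monomial order $\prec$ with $\init_\prec(\init_{M_i}(I)) = \init_\prec(I)$. Such a $\prec$ exists because $I$ is homogeneous, hence its rank-$(d+1)$ Gr\"obner region equals $\R^{(d+1)\times n}$ (as used in the proof of Lemma~\ref{lemma: definition nuMf}), so $M_i$ is in the higher-rank Gr\"obner region. This order automatically refines $M$, and its restriction to the $M$-homogeneous ideal $\init_M(I)$ is a refinement of the scalar weight $w_i$. Let $\mathcal G = \{g_1,\ldots,g_s\}$ be a Gr\"obner basis of $I$ with respect to $\prec$. By the standard iterated-refinement property of Gr\"obner bases, $\{\init_M(g_j)\}_{j=1}^s$ is a Gr\"obner basis of $\init_M(I)$ with respect to $\prec$, and applying the same lemma once more to $\init_M(I)$ with the scalar weight $w_i$ shows that $\{\init_{w_i}(\init_M(g_j))\}_{j=1}^s$ generates $\init_{w_i}(\init_M(I))$. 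Analogously, $\{\init_{M_i}(g_j)\}_{j=1}^s$ generates $\init_{M_i}(I)$. Invoking the pointwise identity above, the two generating sets agree term-by-term, so $\init_{w_i}(\init_M(I)) = \init_{M_i}(I)$.

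The main obstacle is the bookkeeping surrounding the monomial order $\prec$: one must verify that the $\prec$ chosen to refine the rank-$(d+1)$ matrix $M_i$ really does restrict to a refinement of the scalar $w_i$ on the $M$-homogeneous ideal $\init_M(I)$, so that the classical refinement lemma (often stated only for scalar weights) can be chained twice, once with $M$ and then with $w_i$. Once this compatibility is set up, the identity follows formally from the pointwise computation of the first step.
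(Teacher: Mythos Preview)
Your argument is correct. The paper does not give a direct proof at all: it simply invokes \cite[Lemma~8.8]{KavehManon-published}, which is precisely the general statement that appending a row $w$ to a matrix $M$ satisfies $\init_{w}(\init_M(I))=\init_{M'}(I)$ for $M'=\begin{pmatrix}M\\w\end{pmatrix}$. What you have written is essentially a self-contained proof of that lemma in the present setting: the pointwise identity $\init_{M_i}(f)=\init_{w_i}(\init_M(f))$ is exactly the observation that the order on $\Q^{d+1}$ is the lexicographic extension of the order on $\Q^d$ by the last coordinate, and the passage to ideals via a compatible monomial order $\prec$ and the iterated refinement of Gr\"obner bases is the standard mechanism (cf.\ the higher-rank version of \cite[Corollary~1.9]{Sturmfels}). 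The only point to tidy up is the one you flag yourself: rather than choosing $\prec$ abstractly with $\init_\prec(\init_{M_i}(I))=\init_\prec(I)$ and then arguing that it refines $M$ and $w_i$ separately, it is cleaner to construct $\prec$ directly as a lexicographic refinement of $M_i$ (first by $M$, then by $w_i$, then by any monomial order), which makes all three compatibilities hold by construction.
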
 
 
 \begin{proof} 
 This is immediate from \cite[Lemma 8.8]{KavehManon-published}. 
 \end{proof} 
 
  Since the cones $C_i$ are prime and maximal-dimensional by assumption, the corresponding initial ideals $\init_{M_i}(I)$ are toric ideals. Let $X_i$ for $i=1,2$ denote the corresponding Gr\"obner toric degenerations. Note that Lemma~\ref{lemma: initial ideals coincide} says that we may also realize $X_i$ as a Gr\"obner toric degeneration of $Y := Proj(\C[x_1,\ldots,x_n]/\init_M(I))$. By construction, and also by the assumptions in the special case under consideration, we know that $\init_M(I)$ is homogeneous with respect to a $\Z^d$-grading; thus, $Y$ is equipped with the action of a codimension-$1$ torus $T$, and this torus still acts on the toric degeneration $X_i$. More specifically, the full-dimensional torus acting on $X_i$ (with respect to which $X_i$ is a toric variety) contains $T$ as a subtorus. We have the following. 

\begin{lemma}\label{lemma: fibers are geometric} 
Let $\xi \in \Delta(S) \cap \Q^d$ be a rational point in the relative interior of $\Delta(S)$. Let $m \in \Z$, $m >0$ such that $m \xi \in \Z^d$. Let $i=1$ or $i=2$. Then there exists a real positive constant $\kappa_i$, independent of $\xi$, such that the length 
$\len(\piod^{-1}(\xi) \cap \Delta(S_i))$ is equal to $\kappa_i/m$ times the degree of the GIT quotient $X_i //_{m\xi} T$.  
\end{lemma}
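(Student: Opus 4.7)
The plan is to identify $X_i //_{m\xi} T$ explicitly as the $\Proj$ of a graded ring of $T$-semi-invariants, and then extract its degree from the Hilbert function via a lattice-point count along the fiber of $\piod$ over $\xi$.

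First I would recall that $X_i = \Proj(\C[S_i])$ is a (possibly non-normal) projective toric variety of dimension $d$, whose ample line bundle $L_i$ has polytope $\Delta(S_i)$. The subtorus $T$ acts on $X_i$ via the gradings by $u_2, \ldots, u_d$, and on character lattices the induced surjection $X^*(T_{X_i}) \twoheadrightarrow X^*(T)$ is identified with $\piod$ (after discarding the degree direction, which is killed upon passing to $\Proj$). Standard variation of GIT for polarized toric varieties then realizes $X_i //_{m\xi} T$ as the $\Proj$ of the $\Z$-graded ring
\[
R \;=\; \bigoplus_{k \geq 0} \bigl(\C[S_i]\bigr)_{(mk,\, mk\xi)},
\]
where $\bigl(\C[S_i]\bigr)_{(mk,\,mk\xi)}$ denotes the subspace of degree $mk$ on which $T$ acts with weight $mk\xi$.

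Because $\C[S_i]$ has one-dimensional $M_i$-graded pieces indexed by $S_i \subseteq \Z^{d+1}$, the $k$-th graded piece of $R$ has dimension equal to the number of integers $t$ with $(mk,\, mk\xi_2,\ldots,mk\xi_d,\, t)\in S_i$. Equivalently, this counts lattice points of $S_i$ along the line $\piod^{-1}(mk\xi)$ intersected with $mk\, \Delta(S_i)$, which is a segment parallel to $e_{d+1}$ of Euclidean length $mk\, \mathcal{L}_i(\xi)$. Setting $K_i := G(S_i) \cap \R\, e_{d+1}$ and writing $K_i = \Z\cdot \kappa_i e_{d+1}$ with $\kappa_i \in \Z_{>0}$, the coset $G(S_i) \cap \piod^{-1}(mk\xi)$ has successive points separated by Euclidean distance $\kappa_i$. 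Since $\xi$ lies in the relative interior of $\Delta(S)$, for $k$ sufficiently large all such lattice points in the fiber segment lie in $S_i$. Combining these observations yields $\dim R_k = mk\,\mathcal{L}_i(\xi)/\kappa_i + O(1)$ as $k \to \infty$.

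Because $\dim(X_i //_{m\xi} T) = 1$ (as $\dim X_i = d$ and $\dim T = d-1$), the Hilbert function of $R$ is eventually linear, and its leading coefficient equals $\deg(X_i //_{m\xi} T)$. Thus $\deg(X_i //_{m\xi} T) = m\,\mathcal{L}_i(\xi)/\kappa_i$, which rearranges to $\mathcal{L}_i(\xi) = (\kappa_i/m)\, \deg(X_i //_{m\xi} T)$. The constant $\kappa_i$ depends only on the lattice $G(S_i)$, hence only on the matrix $M_i$, and is independent of $\xi$. The main technical obstacle will be justifying the toric-GIT description of $X_i //_{m\xi} T$ in this generality (allowing $S_i$ to be non-saturated and hence $X_i$ possibly non-normal), and identifying the lemma's notion of ``degree'' with the leading coefficient of this Hilbert function; once these identifications are in place, the remainder is a direct one-dimensional lattice-point asymptotic.
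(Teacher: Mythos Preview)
Your argument is correct and arrives at the same conclusion with the same constant $\kappa_i$ (the index of $G(S_i)\cap \R e_{d+1}$ in $\Z e_{d+1}$), but it takes a genuinely different route from the paper. The paper's proof is symplectic: it views $X_i$ as a toric variety with moment polytope $\Delta(S_i)$, observes that $\piod$ is the moment map for the codimension-one subtorus $T$, and then identifies the degree of $X_i //_{m\xi} T$ with the symplectic volume of the reduced space; since the quotient carries a residual $S^1$-action whose moment image is the fiber segment (scaled by $m$), this volume equals $m$ times the normalized length of the fiber. Your approach instead works directly with the semigroup algebra: you realize the GIT quotient as $\Proj$ of the $T$-semi-invariant ring, read off the Hilbert function as a one-dimensional lattice-point count along the fiber, and extract the degree from its leading coefficient. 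The symplectic argument is shorter once one accepts the moment-map dictionary, but implicitly leans on normality (moment polytopes are usually set up for normal toric varieties); your approach is more elementary and makes the non-normal case more transparent, at the cost of having to justify the saturation step you flag (that for $k\gg 0$ and $\xi$ interior, every $G(S_i)$-lattice point on the fiber segment lies in $S_i$, or at least that the discrepancy is $O(1)$). Either way, the identification of $\kappa_i$ is the same.
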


\begin{proof} 
Let $i=1$ or $i=2$. 
We know $X_i$ is a toric variety and the moment map of the codimension-$1$ subtorus is obtained by projection of $\Delta(S_i)$ to $\Delta(S)$ via $\piod$ \cite[Section 28.3]{CdS}. 
For $m$ chosen as in the statement of the lemma, we may consider $m \xi$ as a point in $m \Delta(S)$, i.e. the $m$-scalar multiple of $\Delta(S)$. Note that since $T$ is codimension $1$, the GIT (equivalently, symplectic) quotient by $T$ will be complex $1$-dimensional and real $2$-dimensional (cf. \cite{MarWei}, \cite[Theorem 23.1]{CdS}). The degree of the GIT quotient $X_i //_{m \xi} T$ is also the symplectic volume of the symplectic quotient of $X_i$ at $m \xi$ with respect to the $m$-scalar multiple of the original $T$-moment map (\cite[Theorem 13.4.1]{CLS}, \cite[Section 30.1]{CdS}). The symplectic (GIT) quotient $X_i //_{m \xi} T$ is equipped with a residual $S^1$-action ($\C^*$-action) whose moment map image is precisely  the fiber $\piod^{-1}(\xi) \cap \Delta(S_i)$ (multiplied by $m$) \cite[Section 24.3]{CdS}. It follows that the symplectic volume of the symplectic quotient is $m$ times a normalized Euclidean length of $\piod^{-1}(\xi) \cap \Delta(S_i)$ \cite[Section 30.1]{CdS}. Here the normalization factor $\kappa_i$ depends on the index of $G(S_i) \cap \{x_1=\cdots=x_d=0\}$ in $\Z$ and is hence independent of $\xi$, as claimed. 
\end{proof}

The above lemma indicates that in order to prove Theorem~\ref{theorem: fiber lengths equal}, it suffices to show that the degrees of the two GIT quotients $X_1 //_{m \xi} T$ and $X_2 //_{m \xi} T$ are equal.  This is where we use a variation of GIT.  We have the following.

\begin{lemma}\label{lemma: degrees equal} 
$\deg(X_1 //_{m \xi} T) = \deg(X_2 //_{m \xi} T)$. 
\end{lemma}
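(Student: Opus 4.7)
My plan is to reduce the problem to showing that both $\deg(X_1 //_{m\xi} T)$ and $\deg(X_2 //_{m\xi} T)$ equal a common third quantity $\deg(Y //_{m\xi} T)$. By Lemma~\ref{lemma: initial ideals coincide} we have $\init_{M_i}(I) = \init_{w_i}(\init_M(I))$, so each $R_i := \C[x_1,\ldots,x_n]/\init_{M_i}(I)$ is a Gr\"obner degeneration of $R := \C[x_1,\ldots,x_n]/\init_M(I)$ with respect to the weight $w_i$. The crucial structural fact I would first establish is that all of these ideals are homogeneous with respect to the $\Z^d$-grading on $\C[x_1,\ldots,x_n]$ in which $x_j$ is assigned the $j$-th column of $M$. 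Indeed, $\init_M(I)$ is $M$-homogeneous by construction, and taking an initial ideal with respect to an additional weight preserves homogeneity for any multigrading the ideal already carries (the initial form of a $\Z^d$-homogeneous polynomial is again $\Z^d$-homogeneous of the same weight).

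Next I would appeal to the preservation of multigraded Hilbert functions under Gr\"obner deformation: since $\init_M(I)$ deforms flatly to $\init_{M_i}(I)$ and both ideals are $\Z^d$-homogeneous, I obtain the bigraded equality
$$\dim_\C R_\chi = \dim_\C (R_i)_\chi \qquad \text{for every } \chi \in \Z^d \text{ and } i = 1,2.$$
Concretely, this follows by observing that the Rees algebra associated to the $w_i$-filtration on $R$ is $\Z^d$-graded, and each bigraded piece is a free $\C[t]$-module whose specializations at $t=1$ and $t=0$ recover $R_\chi$ and $(R_i)_\chi$ respectively.

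I would then translate this into equality of degrees of the GIT quotients. The codimension-$1$ subtorus $T$ corresponds to the quotient of the $\Z^d$-grading by the projective grading (the top row $u_1 = (1,\ldots,1)$ of $M$), and the homogeneous coordinate ring of $X_i //_{m\xi} T$ with its natural polarization is the subalgebra $\bigoplus_{k \geq 0} (R_i)_{km\xi}$ of $R_i$, and analogously for $Y //_{m\xi} T$. The $\Z^d$-graded dimension equality above implies that these two graded rings have identical Hilbert functions, hence identical Hilbert polynomials. Since both GIT quotients are one-dimensional projective varieties (as $T$ has codimension one), their degrees coincide with the leading coefficients of their Hilbert polynomials, so $\deg(X_i //_{m\xi} T) = \deg(Y //_{m\xi} T)$ for both $i$, which yields the claim.

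The main obstacle I anticipate is cleanly establishing the \emph{multigraded} invariance of Hilbert functions under Gr\"obner deformation, since this is most often stated only for the standard projective grading. The key point supporting this extension is that the weight $w_i$ and the rows of $M$ together form the full-rank matrix $M_i$, so the $w_i$-direction is genuinely transverse to the $\Z^d$-grading; this is precisely what allows the Rees construction to produce a flat family that respects both gradings simultaneously, and hence to transport dimensions on each $\Z^d$-graded piece from the generic fiber to the special fiber.
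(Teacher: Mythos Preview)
Your argument is correct and reaches the same conclusion $\deg(X_i //_{m\xi} T) = \deg(Y //_{m\xi} T)$ for $i=1,2$ via the intermediate variety $Y$, but the mechanism differs from the paper's. The paper invokes Hu's result \cite[Theorem 2.1.1]{Hu} that taking the GIT quotient by $T$ at $m\xi$ of the entire Gr\"obner family $\mathcal{X}_i \to \A^1$ yields a flat family of GIT quotients with generic fiber $Y //_{m\xi} T$ and special fiber $X_i //_{m\xi} T$; flatness then forces equality of degrees. You instead bypass Hu's theorem entirely by working at the level of coordinate rings: you use that the Gr\"obner degeneration from $\init_M(I)$ to $\init_{M_i}(I)$ respects the $\Z^d$-grading (since $\init_M(I)$ is $M$-homogeneous and taking $\init_{w_i}$ preserves this), so the multigraded Hilbert functions of $R$ and $R_i$ agree, and then identify the coordinate ring of each GIT quotient as the diagonal subalgebra $\bigoplus_{k\geq 0}(R_i)_{km\xi}$. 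Your route is more elementary and self-contained---it avoids the general machinery of variation of GIT in families---while the paper's is shorter because it packages the Hilbert-function argument inside a black-box citation. The concern you flag about multigraded Hilbert-function invariance is not a real obstacle: since the first row of $M$ is $(1,\ldots,1)$, each $\Z^d$-graded piece is finite-dimensional, so the Rees-algebra argument you sketch (flat over $\C[t]$, hence free on each finite piece) goes through cleanly.
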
 

\begin{proof} 
We observed above that both $X_1$ and $X_2$ are Gr\"obner toric degenerations of $Y$, since $\init_{w_i}(\init_{M}(I)) = \init_{M_i}(I)$. This means that there exist flat families $\mathcal{X}_1$ and $\mathcal{X}_2$ over $\A^1$ such that the generic fibers are isomorphic to $Y$ for both $\mathcal{X}_1$ and $\mathcal{X}_2$, and the special fiber is isomorphic to $X_1$ and $X_2$ respectively. We also saw that there is an action of a codimension-$1$ torus on $Y, X_1$ and $X_2$, and it is straightforward to see that this action extends to the families $\mathcal{X}_1$ and $\mathcal{X}_2$. By \cite[Theorem 2.1.1]{Hu} we know that, for $i=1$ or $i=2$, the global GIT quotient of the entire family by $T$ at $m \xi$ is a flat family $\mathcal{X}_i //_{m \xi} T$ over $\A^1$ whose generic fiber is $Y //_{m \xi} T$ and whose special fiber is $X_i //_{m \xi} T$. 
Since the family is flat, we know $\deg(Y //_{m \xi} T) = \deg(X_i //_{m \xi} T)$. Since this equality holds for both $i=1$ and $i=2$, we conclude that $\deg(X_1 //_{m \xi} T) = \deg(X_2 //_{m \xi} T)$, as desired. 
\end{proof}

\begin{proof}[Proof of Theorem~\ref{theorem: fiber lengths equal}]
From Lemma~\ref{lemma: pwl equal enough for simplices} it suffices to check the equality of lengths at all rational points in the interior of $\Delta(S)$. Let $\xi \in \Delta(S) \cap \Q^d$ be an interior point and choose $m>0, m \in \Z$ such that $m \xi \in \Z^d$. By Lemma~\ref{lemma: fibers are geometric} we know that 
$\mathcal{L}_1(\xi) = \frac{\kappa_1}{m} \deg(X_1 //_{m \xi} T)$ and $\mathcal{L}_2(\xi) = \frac{\kappa_2}{m} \deg(X_2 //_{m \xi} T)$ where both $\kappa_1, \kappa_2$ are real and positive global constants that are independent of $\xi$. From Lemma~\ref{lemma: degrees equal} we know that the degrees of the two GIT quotients $X_1 //_{m \xi} T$ and $X_2 //_{m \xi} T$ are equal, so we conclude $\frac{1}{\kappa_1} \mathcal{L}_1(\xi) = \frac{1}{\kappa_2} \mathcal{L}_2(\xi)$. Setting $\kappa=\kappa_2/\kappa_1$ completes the proof. 
\end{proof}

\section{Wall-crossing formulas for Newton-Okounkov bodies and value semigroups}\label{sec:wall-crossing}

The main result of this section is the construction of explicit wall-crossing maps $\mathsf{S}$ (the ``shift map'') and $\mathsf{F}$ (the ``flip map'') mentioned in Theorem~\ref{theorem:main}, thus completing the proof of Theorem~\ref{theorem:main}. 
This will complete the proof of our main result, Theorem~\ref{theorem:main}.
 Since these maps are defined between the polytopes, we refer to these as the ``geometric wall-crossing'' formulas. Then, in Section~\ref{subsec: algebraic crossing}, we construct a bijective map $\Theta: S_1 \to S_2$ on the semigroups that covers the identity on $S:=S(A,\nu_{M})$ and behaves well with respect to the generators of the semigroups, in a sense to be described below (see Lemma~\ref{lemma: formula for Theta}). To distinguish the map $\Theta$ from the geometric wall-crossing maps, we refer to $\Theta$ as the ``algebraic wall-crossing map''.  It should be emphasized that the algebraic wall-crossing map $\Theta$ is not necessarily a semigroup homomorphism, and it does not necessarily arise as a restriction of a geometric wall-crossing map to the semigroup. Example~\ref{example: algebraic is not geometric} illustrates these points.

\subsection{Geometric wall-crossing for Newton-Okounkov bodies}\label{subsec: geometric wall-crossing} \label{subsec: geometric crossing}

The goal of this section is to construct the two piecewise-linear maps $\mathsf{F}$ and $\mathsf{S}$ between the Newton-Okounkov bodies $\Delta(S_1)$ and $\Delta(S_2)$ in the same setting as Section~\ref{sec:fibers}.  For the purpose of this discussion we view the polytopes $\Delta(S_1)$ and $\Delta(S_2)$ in the ``level-$1$'' affine subspace $\{1\} \times \R^d \subseteq \R^{d+1}$ as in Section~\ref{sec:fibers}.

Let $i=1$ or $2$. Since $\Delta(S_i)$ is a polytope and projects to $\Delta(S)$, there exist piecewise-linear functions $\varphi_i: \Delta(S) \to \R$ and $\psi_i: \Delta(S) \to \R$ such that 
	\begin{equation}\label{eq:NOBY-ineqs}
	\Delta(S_i) = \{ ((1,v), z) \in \{1\} \times \R^{d-1} \times \R \, \vert \, (1,v) \in \Delta(S),  \varphi_i(1,v) \leq z \leq \psi_i(1,v) \} \subseteq  \{1\} \times \R^d. 
	\end{equation}
From Theorem~\ref{theorem: fiber lengths equal} we know that for any $(1,v) \in \Delta(S)$ we have 
	\begin{equation}\label{eq:fiberlengths-equal}
	\psi_1(1,v)- \varphi_1(1,v)=
	\len \left(\mathsf{p}^{-1}(1,v) \cap \Delta(S_1)\right)=
	\frac{1}{\kappa} \, \len \left(\mathsf{p}^{-1}(1,v) \cap \Delta(S_2)\right)=
	\frac{1}{\kappa} \left( \psi_2(1,v)- \varphi_2(1,v) \right)
	\end{equation}
	where $\kappa := |\kappa_1/\kappa_2|$ is the global constant, appearing in Theorem~\ref{theorem: fiber lengths equal}, which depends on the choices of $C_i, M_i$. 
Using this, we define the \textbf{shift map} $\mathsf{S}_{12}$ by the formula
	\begin{align}\mathsf{S}_{12}:\R^{r+1}&\rightarrow\R^{r+1} \nonumber\\
	(1,v,z)&\mapsto(1,v, \kappa(z - \varphi_1(1,v)) + \varphi_2(1,v))  \label{eq: shift map}
	\end{align}
and we define the \textbf{flip map} $\mathsf{F}_{12}$ as 
	\begin{align}\mathsf{F}_{12}:\R^{r+1}&\rightarrow\R^{r+1} \nonumber \\
	(1,v,z)&\mapsto(1,v, \kappa( - z+   \varphi_1(1,v)) + \psi_2(1,v)).   \label{eq: flip map}
	\end{align}

We can now complete the proof of Theorem~\ref{theorem:main}. 

\begin{proof}[Remainder of proof of Theorem~\ref{theorem:main}] 
Since we already saw in Lemma~\ref{lemma: NOBY projects to NOBY} and Theorem~\ref{theorem: fiber lengths equal} that the first claims of Theorem~\ref{theorem:main} hold, it remains to show that the maps ${\sf S}_{12}$ and ${\sf F}_{12}$ from 
$\Delta(S_1)$ to $\Delta(S_2)$ are piecewise-linear, bijective, and that the following diagrams commute: 
\[
\xymatrix{
\Delta(S_1) \ar[rr]^{\mathsf{S}_{12}} \ar[rd]_{\piod} & & \Delta(S_2) \ar[ld]^{\piod} \\ 
 & \Delta(S) &
 }
 \quad 
 \xymatrix{ 
 \Delta(S_1) \ar[rr]^{\mathsf{F}_{12}} \ar[rd]_{\piod} & & \Delta(S_2) \ar[ld]^{\piod} \\ 
 & \Delta(S) &} 
\]

To do this, we first check that the maps are well-defined, i.e., they take values in $\Delta(S_2)$ as claimed. 
It is straightforward to check that both maps are injective. Let $(1,v,z)\in \Delta(S_1)$. 
 We have 
 \begin{equation}
 \begin{split} 
 \varphi_1(1,v) \leq z \leq \psi_1(1,v) & \iff 0 \leq z - \varphi_1(1,v) \leq \psi_1(1,v) - \varphi_1(1,v) \\
  & \iff \varphi_2(1,v) \leq \kappa(z - \varphi_1(1,v)) + \varphi_2(1,v) \leq \kappa (\psi_1(1,v) - \varphi_1(1,v)) + \varphi_2(1,v) \\
  & \iff \varphi_2(1,v) \leq \kappa(z - \varphi_1(1,v)) + \varphi_2(1,v) \leq \psi_2(1,v) - \varphi_2(1,v) + \varphi_2(1,v) \\
  & \iff \varphi_2(1,v) \leq \kappa(z-\varphi_1(1,v)) + \varphi_2(1,v) \leq \psi_2(1,v)
 \end{split} 
 \end{equation}
 where we have used the fact that $\kappa(\psi_1(1,v)-\varphi_1(1,v)) = \psi_2(1,v) - \varphi_2(1,v)$. It follows that $\mathsf{S}_{12}$ is well-defined, and the argument for $\mathsf{F}_{12}$ is similar. Since $\psi_i, \varphi_i, i=1,2$ are piecewise-linear, it follows that both $\mathsf{S}_{12}$ and $\mathsf{F}_{12}$ are piecewise linear. Similar arguments show that both are bijective, and the diagrams commute by construction. 
 This completes the proof of Theorem~\ref{theorem:main}. 
\end{proof}

We can extend the definitions of the shift and flip maps to the cones $\cone(S_1), \cone(S_2)$. This is useful when we consider the relationship between the geometric wall-crossing maps $\mathsf{S}_{12}$ and $\mathsf{F}_{12}$ with the algebraic wall-crossing map to be defined in the next section.

\begin{remark}\label{rem: maps extend to cones} 
Let $(s,v)\in \cone(S_1)$ for $s \neq 0$. By rescaling, we obtain that that $(1,\frac{1}{s}v)\in\Delta(S_1)$, since  $\cone(S_1)$ is the cone over $\Delta(S_1)$. Then ${\sf F}_{12}(1,\frac{1}{s}v)\in\Delta(S_2)$ and therefore $s\cdot{\sf F}_{12}(1,\frac{1}{s}v)\in \cone(S_2)$. A similar formula holds for $\mathsf{S}_{12}$. 
Thus we can extend the shift map \eqref{eq: shift map} and the flip map \eqref{eq: flip map} to $\cone(S_1)$ as follows: 
	\begin{align*}
	{\sf F}_{12}:	\cone(S_1)	&\to \cone(S_2)  \\
	(s,v)	&	\mapsto	s\cdot{\sf F}_{12}(1, v/s).
\end{align*}
The same holds for ${\sf S}_{12}$.
\end{remark}

\subsection{Wall-crossing for value semigroups} \label{subsec: algebraic crossing}

In the previous section, we constructed maps between the Newton-Okounkov polytopes $\Delta(S_1)$ and $\Delta(S_2)$ associated to the maximal-dimensional prime cones $C_1$ and $C_2$. In this section, 
we turn our attention to the underlying semigroups $S_1$ and $S_2$ and ask whether there exists a natural bijection $\Theta: S_1 \to S_2$ between them which would cover the identity on $S$, i.e., so that the diagram 
\begin{equation}\label{eq: diagram on semigroups}
\xymatrix{
S_1 \ar[rr]^{\Theta} \ar[rd]_{\piod} & & S_2 \ar[ld]^{\piod} \\ 
 & S &
 }
 \end{equation} 
 commutes. The answer, which is the content of this section, is that there does exist such a natural map, at least under the hypothesis that the two cones 
$C_1$ and $C_2$ are both faces of a single maximal cone $C_<$ of the Gr\"obner fan of $I$. 
Let $\mathcal{S}(<,I) \subset \C[x_1, \ldots, x_n]$ denote the set of standard monomials with respect to $I$ and the monomial order $<$ and let $b_\alpha := \pi(x^{\alpha})$ denote the projection to $A$ of $x^{\alpha} \in \mathcal{S}(<,I)$. The following is known. 

\begin{proposition}\label{prop: common adapted basis} (\cite[Proposition 3.3]{KavehManon-published}) 
Given $C$ as above, let $M$ be an $r \times n$ matrix with $j$-th row equal to $u_j$ for linearly independent vectors $\{u_1,\ldots,u_r\} \subset C$.  Then the set $\mathbb{B} := \{b_{\alpha}\}$ is an adapted basis of $A$ with respect to $\mathfrak{\nu}_M$. Moreover, we have $\init_{<}(\init_M(I)) = \init_{<}(I)$. 
\end{proposition}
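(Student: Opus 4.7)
The plan is to prove the two assertions in the reverse order from how they are stated: first establish $\init_<(\init_M(I)) = \init_<(I)$, and then use this equality to identify $\mathbb{B}$ as an adapted basis.

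For the first assertion: since $I$ is homogeneous, the remark following Theorem~\ref{theorem:kavehmanon theorem 4} shows that the rank-$r$ Gr\"obner region of $I$ equals $\Q^{r\times n}$, so $\init_M$ is well-defined. The crucial input is that $C$ is a face of the maximal Gr\"obner cone $C_<$, so every row $u_j$ of $M$ lies in $C_<$. The rank-$1$ statement $\init_<(\init_w(I)) = \init_<(I)$ for $w \in C_<$ encodes the defining property of the Gr\"obner cone. To upgrade this to the matrix weight $M$, one can realize $\init_M(I)$ iteratively as $\init_{u_r}(\init_{u_{r-1}}(\cdots \init_{u_1}(I)\cdots))$, or equivalently as $\init_{u_1 + \varepsilon u_2 + \cdots + \varepsilon^{r-1} u_r}(I)$ for sufficiently small rational $\varepsilon > 0$; this perturbed vector still lies in $C_<$, so a final application of $\init_<$ returns $\init_<(I)$.

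For the second assertion: the set of standard monomials $\mathcal{S}(<, J)$ of any ideal $J$ depends only on $\init_<(J)$, so the preceding equality gives $\mathcal{S}(<, I) = \mathcal{S}(<, \init_M(I))$. By Theorem~\ref{theorem:kavehmanon theorem 4}(1) there is a $\Q^r$-graded isomorphism $gr_{\nu_M}(A) \cong \C[x_1,\ldots,x_n]/\init_M(I)$, and the projections of $\mathcal{S}(<, \init_M(I))$ to this quotient form a vector-space basis. To conclude that $\{b_\alpha\}$ is an adapted basis of $(A, \nu_M)$, it suffices to show that for each standard monomial $x^\alpha$, the image of $b_\alpha$ in the graded piece of $gr_{\nu_M}(A)$ at level $M\alpha$ corresponds, under the above isomorphism, to the class of $x^\alpha$ in $\C[x]/\init_M(I)$. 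This in turn reduces to verifying the equality $\nu_M(b_\alpha) = M\alpha$ for every standard monomial $x^\alpha$.

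The main obstacle is this last verification. By Lemma~\ref{lemma: definition nuMf}, we have $\nu_M(b_\alpha) = \max\{\tilde{\nu}_M(\tilde{f}) : \pi(\tilde{f}) = b_\alpha\}$, with the maximum attained; taking $\tilde{f} = x^\alpha$ immediately yields $\nu_M(b_\alpha) \succeq M\alpha$. For the reverse inequality, suppose for contradiction that some lift $\tilde{f}$ of $b_\alpha$ satisfies $\tilde{\nu}_M(\tilde{f}) \succ M\alpha$. Then $x^\alpha - \tilde{f} \in I$, and since every nonzero term of $\tilde{f}$ has $M$-weight strictly greater than $M\alpha$, the unique minimum-$M$-weight term of $x^\alpha - \tilde{f}$ is $x^\alpha$ itself, so $\init_M(x^\alpha - \tilde{f}) = x^\alpha \in \init_M(I)$. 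Applying $\init_<$ then yields $x^\alpha \in \init_<(\init_M(I)) = \init_<(I)$, contradicting that $x^\alpha$ is a standard monomial.
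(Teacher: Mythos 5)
The paper does not actually prove this proposition; it is quoted verbatim from [Kaveh--Manon, Prop.\ 3.3], so you are supplying an argument where the paper supplies only a citation. Your reconstruction is essentially the standard one. The first half (proving $\init_<(\init_M(I))=\init_<(I)$ by realizing $\init_M(I)$ as $\init_{u_1+\varepsilon u_2+\cdots+\varepsilon^{r-1}u_r}(I)$, observing that this perturbed vector stays in the convex cone $C_<$, and invoking the rank-one compatibility $\init_<(\init_w(I))=\init_<(I)$ for $w\in C_<$) is correct, as is your final computation: the contradiction argument showing that no lift of $b_\alpha$ can have $\tilde{\nu}_M$-value exceeding $M\alpha$ (since that would force $x^\alpha\in\init_M(I)$ and hence $x^\alpha\in\init_<(I)$) is exactly the heart of the Kaveh--Manon proof.

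The soft spot is the reduction of ``$\mathbb{B}$ is adapted'' to the single equality $\nu_M(b_\alpha)=M\alpha$. You route this through the isomorphism $gr_{\nu_M}(A)\cong\C[x_1,\ldots,x_n]/\init_M(I)$ of Theorem~\ref{theorem:kavehmanon theorem 4}(1), but that statement is only available for maximal-dimensional \emph{prime} cones, whereas the proposition is stated for an arbitrary face $C$ of $C_<$ (e.g.\ the codimension-one wall $C_1\cap C_2$, for which $\nu_M$ is a priori only a quasivaluation and its leaves need not be one-dimensional); moreover, in the source the isomorphism is itself deduced \emph{from} the adapted-basis property, so as written the argument is circular relative to the result being proved. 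The fix costs nothing beyond what you already have: run your contradiction argument on a linear combination rather than a single monomial. If $\sum_\alpha c_\alpha x^\alpha$ is a nonzero combination of standard monomials with $M\alpha=a$ for all $\alpha$ in the support and $\pi(\sum c_\alpha x^\alpha)\in F_{\nu_M\succ a}$, then some lift with all weights $\succ a$ exists, the difference lies in $I$, its $M$-initial form is $\sum c_\alpha x^\alpha\in\init_M(I)$, and applying $\init_<$ puts a standard monomial in $\init_<(I)$ --- a contradiction. This shows the classes $\overline{b_\alpha}$ with $M\alpha=a$ are linearly independent in $F_{\nu_M\succeq a}/F_{\nu_M\succ a}$ and that $\nu_M(\pi(\sum c_\alpha x^\alpha))=\min\{M\alpha: c_\alpha\neq 0\}$ in general; combined with the fact that $\{b_\alpha\}$ is already a vector-space basis of $A$, this yields adaptedness directly, with no appeal to the graded isomorphism.
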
  

The point of the above proposition is that, if $C_1$ and $C_2$ are both faces of the same maximal cone $C_{<}$ in the Gr\"obner fan, then the same set $\mathcal{S}(<,I)$ of standard monomials with respect to $<$ projects to give an adapted basis of $A$ for both $\nu_{M_1}$ and $\nu_{M_2}$. 
This fact allows us to produce a function $S_1 \to S_2$ as follows. Applying Proposition~\ref{prop: common adapted basis} to $M_i$ for $i=1$ and $2$, we conclude that $\mathbb{B}$ is adapted to both $\nu_{M_1}$ and $\nu_{M_2}$. Since both $\nu_{M_1}$ and $\nu_{M_2}$ have one-dimensional leaves, we can conclude that the valuations $\nu_{M_i}$ for $i=1$ and $2$ induce bijections 
\[
\theta_1: \mathbb{B} \to S_1 \textup{ defined by } b_{\alpha} \mapsto \nu_{M_1}(b_\alpha)
\]
for each $b_{\alpha} \in \mathbb{B}$, and similarly 
\[
\theta_2: \mathbb{B} \to S_2 \textup{ defined by } b_{\alpha} \mapsto \nu_{M_2}(b_\alpha).
\]
Then the function on semigroups may be defined by 
\begin{equation}\label{eq: algebraic crossing} 
\Theta := \theta_2 \circ \theta_1^{-1}:   S_1 \to S_2.
\end{equation}
We refer to $\Theta$ as the \textbf{algebraic wall-crossing} map.  
Moreover, the above argument shows that this is well-defined and a bijection.

The following, which is a straightforward consequence of \cite[Lemma 2.32]{KavehManon-published}, will be computationally useful. 

\begin{lemma}\label{lemma: formula for Theta}
Let $\Theta: S_1 \to S_2$ be the map defined above and let $x^{\alpha} \in \mathcal{S}(<,I)$. 
Then $\Theta(M_1\alpha) = M_2 \alpha$.
\end{lemma}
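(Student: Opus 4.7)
The plan is to reduce the lemma to the single identity $\nu_{M_i}(b_\alpha) = M_i\alpha$ for each $i = 1,2$ and each standard monomial $x^\alpha \in \mathcal{S}(<, I)$. Indeed, granting this, the definitions $\theta_i(b_\alpha) := \nu_{M_i}(b_\alpha)$ give $\theta_i(b_\alpha) = M_i\alpha$, so that $\theta_1^{-1}(M_1\alpha) = b_\alpha$ and hence $\Theta(M_1\alpha) = \theta_2(b_\alpha) = M_2\alpha$.

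To establish $\nu_{M_i}(b_\alpha) = M_i\alpha$, I would argue the two inequalities separately. The direction $\nu_{M_i}(b_\alpha) \succeq M_i\alpha$ is immediate from Lemma~\ref{lemma: definition nuMf}: the monomial $x^\alpha$ is a lift of $b_\alpha = \pi(x^\alpha)$ with $\tilde{\nu}_{M_i}(x^\alpha) = M_i\alpha$, and the maximum in \eqref{eq: definition nuMf} is at least this value. For the reverse direction, the plan is to exploit the $\Q^{d+1}$-graded isomorphism $gr_{\nu_{M_i}}(A) \cong \C[x_1,\ldots,x_n]/\init_{M_i}(I)$ supplied by Theorem~\ref{theorem:kavehmanon theorem 4}(1), together with the equality $\init_{<}(\init_{M_i}(I)) = \init_{<}(I)$ from Proposition~\ref{prop: common adapted basis}. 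The latter tells us that the projections of $\mathcal{S}(<, I)$ form a vector space basis of $\C[x_1,\ldots,x_n]/\init_{M_i}(I)$, and because $\init_{M_i}(I)$ is $M_i$-homogeneous the class of $x^\alpha$ lives precisely in the graded piece of degree $M_i\alpha$. Transporting this through the iso (which is built by sending $x_j$ to the class of $b_j$), one concludes that the image of $b_\alpha$ in $gr_{\nu_{M_i}}(A)$ is nonzero and lies in the graded piece indexed by $M_i\alpha$. Since the image of $b_\alpha$ sits by construction in the graded piece indexed by $\nu_{M_i}(b_\alpha)$, we obtain $\nu_{M_i}(b_\alpha) = M_i\alpha$.

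The main obstacle I anticipate is bookkeeping around the identification of the class of $b_\alpha$ in $gr_{\nu_{M_i}}(A)$ with the class of $x^\alpha$ in the Gr\"obner quotient, including checking that the Kaveh--Manon isomorphism really sends $[b_j]$ to $[x_j]$ in the correct degree $M_i e_j$. This is essentially implicit in the proof of Theorem~\ref{theorem:kavehmanon theorem 4}, but it is the step at which one uses in a non-trivial way both that $\mathbb{B}$ is adapted (so that $b_\alpha$ has nonzero image in $gr_{\nu_{M_i}}(A)$) and that $\nu_{M_i}$ has one-dimensional leaves (so that distinct standard monomials land in distinct graded pieces, making $\theta_i$ a bijection). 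Once these are in hand, the formula $\Theta(M_1\alpha) = M_2\alpha$ drops out of the reduction in the first paragraph.
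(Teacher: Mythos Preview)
Your reduction in the first paragraph is exactly what the paper does: once $\nu_{M_i}(b_\alpha)=M_i\alpha$ is known for $i=1,2$, the formula $\Theta(M_1\alpha)=M_2\alpha$ is immediate from the definitions of $\theta_1,\theta_2,\Theta$.

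The only difference is in how the key identity $\nu_{M_i}(b_\alpha)=M_i\alpha$ is justified. The paper simply cites \cite[Lemma~2.32]{KavehManon-published}, which asserts precisely this for elements of an adapted basis coming from standard monomials. You instead re-derive it: the inequality $\nu_{M_i}(b_\alpha)\succeq M_i\alpha$ via Lemma~\ref{lemma: definition nuMf}, and equality by tracking $b_\alpha$ through the $\Q^{d+1}$-graded isomorphism $gr_{\nu_{M_i}}(A)\cong \C[x_1,\ldots,x_n]/\init_{M_i}(I)$ of Theorem~\ref{theorem:kavehmanon theorem 4}(1), using $\init_<(\init_{M_i}(I))=\init_<(I)$ from Proposition~\ref{prop: common adapted basis} to see that the class of $x^\alpha$ is nonzero in the graded piece of degree $M_i\alpha$. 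This argument is correct and self-contained, and the bookkeeping concern you flag (that the isomorphism carries $[b_\alpha]$ to $[x^\alpha]$ in the correct degree) is indeed the substance of \cite[Lemma~2.32]{KavehManon-published}. So your route is a genuine unpacking of the cited lemma rather than a different proof strategy; it buys independence from the external reference at the cost of some extra work.
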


We now show that the diagram~\eqref{eq: diagram on semigroups} commutes. Recall that the projection map $\piod: S_i \to S$ forgets the last coordinate. 

\begin{lemma}\label{lemma: algebraic covers identity} 
The map $\Theta$ covers the identity on $S$, i.e., for all $u \in S_1$, we have $\piod(u) = \piod(\Theta(u))$. 
\end{lemma}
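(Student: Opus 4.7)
The plan is to reduce the claim to Lemma~\ref{lemma: formula for Theta} and then observe that the projection $\piod$ kills exactly the coordinate in which $M_1$ and $M_2$ differ. More precisely, I would proceed as follows.

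First, let $u \in S_1$ be arbitrary. By the construction of $\Theta$ via the bijection $\theta_1 : \mathbb{B} \to S_1$ induced by the adapted basis $\mathbb{B} = \{b_\alpha\}$, there exists a unique standard monomial $x^\alpha \in \mathcal{S}(<,I)$ with $u = \nu_{M_1}(b_\alpha)$. By \cite[Lemma 2.32]{KavehManon-published} (invoked already in the proof of Lemma~\ref{lemma: formula for Theta}), this yields $u = M_1 \alpha$, and by Lemma~\ref{lemma: formula for Theta} we have $\Theta(u) = M_2 \alpha$.

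Second, recall from the setup preceding Theorem~\ref{theorem:main} that the matrices $M_1$ and $M_2$ are constructed so that their top $d$ rows both coincide with the rows $u_1,\ldots,u_d$ of $M$, while they differ only in the bottom $(d{+}1)$-st row ($w_1$ for $M_1$ and $w_2$ for $M_2$). Consequently $\piod(M_1 \alpha) = M \alpha = \piod(M_2 \alpha)$, since $\piod$ is precisely the linear projection forgetting the last coordinate. Combining this with the previous step gives
\[
\piod(u) = \piod(M_1 \alpha) = \piod(M_2 \alpha) = \piod(\Theta(u)),
\]
which is the desired identity.

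There is essentially no obstacle here: the content of the lemma is packaged into Lemma~\ref{lemma: formula for Theta}, and the rest is a one-line observation about the structure of $M_1$ and $M_2$. The only point worth double-checking is that every element of $S_1$ is represented by some standard monomial $x^\alpha \in \mathcal{S}(<,I)$, which follows from $\mathbb{B}$ being an adapted basis for $\nu_{M_1}$ with one-dimensional leaves, so that $\theta_1$ is a bijection onto $S_1$.
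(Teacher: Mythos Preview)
Your proof is correct and follows essentially the same approach as the paper: both arguments use the bijection $\theta_1$ to write $u=M_1\alpha$ for some $x^\alpha\in\mathcal{S}(<,I)$, invoke Lemma~\ref{lemma: formula for Theta} to identify $\Theta(u)=M_2\alpha$, and then observe that $M_1$ and $M_2$ agree on all rows except the last, so that $\piod(M_1\alpha)=\piod(M_2\alpha)$.
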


\begin{proof} 
Since $\theta_1$ and $\theta_2$ are bijections, we know that any element in $S_1$ (respectively $S_2$) can be written as $M_1 \alpha$ (respectively $M_2\alpha$) for some $x^{\alpha} \in \mathcal{S}(<,I)$. 
Lemma~\ref{lemma: formula for Theta} implies that it suffices to show that, for all $x^{\alpha} \in \mathcal{S}(<,I)$, we have $\piod(M_1\alpha) = \piod(M_2\alpha)$. This follows immediately from the fact that $M_1$ and $M_2$ are equal except on the bottom row. 
\end{proof}

Since the map $\Theta$ defined above is a map between semigroups, it is natural to ask whether $\Theta$ is in fact a semigroup homomorphism.  Moreover, since the shift and flip maps of Section~\ref{subsec: geometric crossing} can be defined on all of $\cone(S_1)$ and the semigroup $S_1$ lies in $\cone(S_1)$, we can ask whether the restriction of either of the ``geometric'' wall-crossing maps -- i.e. the shift or the flip map -- to the subset $S_1$ is equal to $\Theta$. 
It turns out that, in general, $\Theta$ need not be a semigroup homomorphism, and 
$\Theta$ is not necessarily obtained by restriction of $\mathsf{S}_{12}$ or $\mathsf{F}_{12}$. We give an example to illustrate this.

\begin{example}\label{example: algebraic is not geometric} 
 First we illustrate that the algebraic wall-crossing map need not be the restriction of either of the geometric wall-crossing maps.

Let $f = x_2^{11} - x_1^6 x_3^4 x_4 - x_1^7 x_3 x_4^3 \in \C[x_1,x_2, x_3,x_4]$ and let $I = \langle f \rangle$ be the principal ideal generated by $f$. 
 The tropical hypersurface $\mathcal{T}(\langle f\rangle)$ is defined to be the set of $(u_1,u_2,u_3,u_4)\in\R^4$ such that $\init_u(f)$ is not monomial. It is not hard to see that 
  two of the maximal ($3$-dimensional) cones of $\mathcal{T}(\langle f\rangle)$ are given by 
	\begin{align*}C_1&=\cone\{(0,0,-1,4),\pm(1,1,1,1),\pm(0,1,2,3)\} \textup{ and }  \\
	C_2&=\cone\{(0,0,3,-1),\pm(1,1,1,1),\pm(0,1,2,3)\}.
	\end{align*}
	(There is another maximal cone $C_3$ which we do not need to consider, since it is not prime.) 
The initial terms of $f$ corresponding to the cones $C_1$ and $C_2$ above are
	\begin{align*}
	\init_{C_1}(f)&=  x_2^{11} - x_1^6 x_3^4 x_4 \textup{ and } \\
	\init_{C_2}(f)&= x_2^{11} - x_1^7 x_3 x_4^3. 
	\end{align*}
	We claim that both $\init_{C_1}(f)$ and $\init_{C_2}(f)$ are irreducible, and thus that $C_1$ and $C_2$ are maximal-dimensional prime cones in $\mathcal{T}(\langle f \rangle)$. It is clear from the above that $C_1$ and $C_2$ share a codimension-$1$ face, so this means we are in the situation being discussed in this manuscript. Since the arguments for irreducibility of $\init_{C_1}(f)$ and $\init_{C_2}(f)$ are similar, we sketch the argument only for $\init_{C_1}(f)$. Consider the matrix 
	\[
	A = \begin{pmatrix} 1 & 1 & 1 & 1 \\ 0 & 1 & 2 & 3 \\ 0 & 0 & -1 & 4 \end{pmatrix}.
	\]
	Then the kernel of $A$, considered as a linear transformation $\C^4 \to \C^4$, is spanned by the vector $(-6, 11, -4, -1)$. Define a map $\varphi: \C[x_1, x_2, x_3, x_4] \to \C[t_1^{\pm 1}, t_2^{\pm 1}, t_3^{\pm 1}]$ by $x_1 \mapsto t_1, x_2 \mapsto t_1 t_2, x_3 \mapsto t_1 t_2^2 t_3^{-1}$ and $x_4 \mapsto t_1 t_2^3 t_3^4$. Let $I_A := \mathrm{ker} \varphi$. This is a prime ideal since the image of $\varphi$ is a domain (being the subring of a domain); it is also called the toric ideal of $A$. By \cite[Exercise 3.2, Section 3.1]{HerzogHibiOhsugi}, $I_A$ is principal  since $A$ has rank $3$. It is straightforward to check that $\init_{C_1}(f)$ is contained in $I_A = \mathrm{ker} \varphi$. We now claim that $\init_{C_1}(f)$ is a minimal generator of $I_A$. 
	We need some notation. For a vector $\alpha \in \Z^4$ we define $\alpha_+$ and $\alpha_-$ by the formulas
	\[
	(\alpha_+)_i := \begin{cases}   \alpha_i \textup{ if } \alpha_i \geq 0 \\ 0 \textup{ if } \alpha_i < 0 \end{cases}
	\quad \quad 
	(\alpha_-)_i := \begin{cases} 0 \textup{ if } \alpha_i > 0 \\ -\alpha_i \textup{ if } \alpha_i \leq 0. \end{cases} 
	\]
	By \cite[Theorem 3.2]{HerzogHibiOhsugi}, there exists a binomial $f_\alpha = \underline{x}^{\alpha_+} - \underline{x}^{\alpha_{-}}$ with $\alpha \in \mathrm{ker}(A) \cap \Z^4$ such that $f_{\alpha}$ generates $I_A$ and divides $\init_{C_1}(f)$. Since the kernel of $A$ is spanned by $(-6,11,-4,-1)$, there must exist a constant $c \in \C$ such that $c(-6,11,-4,-1) = (\alpha_1, \alpha_2, \alpha_3, \alpha_4)$. 
Since $\alpha\in\Z^4$, we conclude $c$ must be an integer. If $c \neq \pm 1$, then the total degree of $f_\alpha$ is greater than $11$, so we conclude $c = \pm 1$. This then implies $f_\alpha = \pm \init_{C_1}(f)$. Thus, $\init_{C_1}(f)$ is a minimal generator of $I_A$, and since $I_A$ is prime, $\init_{C_1}(f)$ is irreducible.  A similar argument shows $\init_{C_2}(f)$ is irreducible.   We conclude that $C_1$ and $C_2$ are both maximal-dimensional prime cones.

	For the cones $C_1$ and $C_2$ we may choose the corresponding matrices $M_1$ and $M_2$ as follows 
	\[
	M_1 = \begin{pmatrix} 1 & 1 & 1 &1 \\ 0 & 1 & 2 & 3 \\ 0 & 0 & -1 & 4 \end{pmatrix} \quad \textup{ and } 
	\quad M_2 = \begin{pmatrix} 1 & 1 & 1 & 1 \\ 0 & 1 & 2 &3 \\ 0 & 0 & 3 & -1 \end{pmatrix}.
	\]
We illustrated the pair of polytopes associated to these matrices in Figure~\ref{fig: fibers}.

Notice that both $C_1$ and $C_2$ lie in the maximal cone of the Gr\"obner fan corresponding to $\init_{<}(I) = \langle x_2^{11} \rangle$. 
The standard monomials $\mathcal{S}(<,I)$ for $I$ with respect to (a choice of such) a monomial order $<$ for this Gr\"obner cone is the set of all monomials not divisible by $x_2^{11}$. 
	Since $x_1, x_2, x_3, x_4$ are all standard monomials, the
	algebraic wall-crossing map $\Theta$ sends the $j$-th column of $M_1$ to the $j$-th column of $M_2$ for $1 \leq j \leq 4$.  In particular, $\Theta(1,1,0)=(1,1,0)$, since the second column goes to the second column. The Newton-Okounkov bodies in question are the convex hulls of the columns of $M_i$, which we now view as polygons in $\R^2 \cong \{1\} \times \R^2$. Thus the point $(1,1,0)$ considered above is now identified with the point $(1,0)$ in $\R^2$, and this point is contained in the interior of both $\Delta(A,\nu_{M_1})$ and $\Delta(A,\nu_{M_2})$, see Figure~\ref{fig: alg not geom}. Moreover, we have just seen that this interior point $(1,1,0)$ in $\Delta(A,\nu_{M_1})$ must be sent by $\Theta$ to the point $(1,1,0)$ in $\Delta(A,\nu_{M_2})$. It is an easy exercise to check that neither the geometric ``flip'' map  $\mathsf{F}_{12}$ nor the geometric ``shift'' map $\mathsf{S}_{12}$ can accomplish this.   Therefore, the algebraic wall-crossing map $\Theta$ does not arise as the restriction of a geometric wall-crossing in this case. 
	
	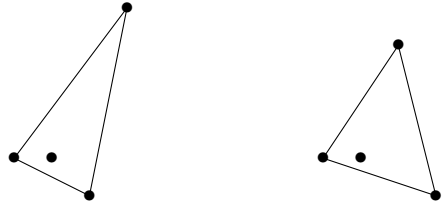
\begin{figure}[h]
	\begin{tikzpicture}\begin{scope}[scale=.5]
	\draw (0,0) node {$\bullet$}--(2,-1) node {$\bullet$}--(3,4) node {$\bullet$}--(0,0);
	\draw (1,0) node {$\bullet$};
	\end{scope}
	\end{tikzpicture}
	\qquad 
	\qquad 
	\qquad 
	\begin{tikzpicture}\begin{scope}[scale=.5]
	\draw (0,0) node {$\bullet$}--(2,3) node {$\bullet$}--(3,-1) node {$\bullet$}--(0,0);
	\draw (1,0) node {$\bullet$};
	\end{scope}
	\end{tikzpicture}
	\caption{The Newton-Okounkov bodies for the matrices $M_1$ and $M_2$ in Example~\ref{example: algebraic is not geometric}.}\label{fig: alg not geom}
	\end{figure}

Secondly, we show that for this example the algebraic wall-crossing $\Theta$ is  also not a semigroup homomorphism. We follow the notation above.
As already noted, the standard monomials of $\init_{C_1}(I)$ with respect to $<$ are all monomials not divisible by $x_2^{11}$. We have also already seen that $\Theta(1,1,0)=(1,1,0)$. 
If $\Theta$ were a semigroup map, then we must have $\Theta(11,11,0)=11 \cdot \Theta(1,1,0)$. 
However, since $x_2^{11}$ is not a standard monomial, in order to compute $\Theta$ of $(11,11,0) = 11(1,1,0) = M_1 \cdot (0,11,0,0)$ we must first find a standard monomial $x^\alpha\in\mathcal{S}(<,I)$ such that $M_1 \cdot (0,11,0,0) = M_1 \alpha$. 
Notice that $x_1^6x_3^4x_4$ accomplishes this. Therefore,
\[
\Theta(11,11,0) = \Theta(M_1 \cdot (6,0,4,1)^T) = M_2 (6,0,4,1)^T = (11,11,11) \neq 11 \cdot \Theta(1,1,0)
\]
where by slight abuse of notation we have denoted vectors occasionally as rows and at other times as columns. 
Hence we conclude that $\Theta$ is not a semigroup map.
\end{example}

\section{Example: the Grassmannian of 2-planes in $m$-space}\label{sec:Gr2m}

In this section, we illustrate the wall-crossing phenomena developed above for the tropical Grassmannian $\trop(Gr(2,m))$. 
In addition, although we saw in Example~\ref{example: algebraic is not geometric} that the algebraic wall-crossing map is not necessarily the restriction of a geometric wall-crossing, we show in Theorem~\ref{theorem: main Gr2m} that 
in the case of $\trop(Gr(2,m))$, 
the algebraic crossing $\Theta$ is the restriction of the geometric ``flip'' map.

\subsection{Background on the tropical Grassmannians}

To begin, we briefly establish some notation. Let $Gr(2,m)$ denote the Grassmannian of $2$-planes in $\C^m$ embedded in $\P(\Lambda^2(\C^m))$ via the \textbf{Pl\"ucker embedding}. 
For each subset $J \subseteq [m] := \{1,2,\ldots,m\}$ of cardinality $2$ we associate a variable $p_J$. 
It is well-known that the homogeneous coordinate ring $A$ of $Gr(2,m)$ with respect to the Pl\"ucker embedding satisfies $A \cong \C[p_J : J \subset [m], \lvert J \rvert =2]/I_{2,m}$
where $I_{2,m}$ is the \textbf{Pl\"ucker ideal} 
	\begin{equation}\label{eq: plucker ideal}
	I_{2,m}=\langle	p_{ij}p_{kl}-p_{ik}p_{jl}+p_{il}p_{jk}\mid	1\le i<j<k<l\le m	\rangle
	\end{equation}
(see e.g. \cite[Proposition 2.2.10]{MaclaganSturmfels}). 
Let us now briefly summarize some facts about the tropical Grassmannian $\mathcal{T}(I_{2,m}) = \mathrm{trop}(\tilde{Gr}^0(2,m))$; see \cite{SpeyerSturmfels,MaclaganSturmfels}. We need some terminology. 
A \textbf{phylogenetic tree} on $[m]$ is a tree with $m$ labelled leaves and no vertices of degree $2$. The $m$ edges which are adjacent to the leaves of the tree are called \textbf{pendant edges} and the others are called \textbf{interior edges}. 
Given a phylogenetic tree $\tree$ on $[m]$, a \textbf{tree distance} is a vector $\underline{d} = (d_{ij}) \in \R^{\binom{m}{2}}$ constructed as follows. Assign a length $\ell_\varepsilon \in \R$ to each edge $\varepsilon$ in $\tree$ (note we do not assume the lengths are positive). Since $\tree$ is a tree, there is a unique path connecting any two leaves $i$ and $j$; let $d_{ij}$ be the sum of the lengths $\ell_\varepsilon$ of all the edges in this path.
The set of all tree distances in $\R^{\binom{m}{2}}$ is called the \textbf{space of phylogenetic trees}.  

\begin{theorem}\label{thm:phyloTrees} (\cite[Theorem 4.3.5]{MaclaganSturmfels}) 
The negative $- \mathcal{T}(I_{2,m}) \subseteq \R^{\binom{m}{2}}$ of the tropical Grassmannian is equal to the space of phylogenetic trees with $m$ labelled leaves.
\end{theorem}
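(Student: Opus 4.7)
The plan is to follow the standard approach due to Speyer--Sturmfels: first translate the defining condition of $\mathcal{T}(I_{2,m})$ into a system of tropicalized three-term Pl\"ucker relations, then recognize this system (after the sign reversal) as the classical four-point condition of Buneman, whose solutions are exactly the tree distances on $[m]$.

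First I would consider the quadratic generators of $I_{2,m}$ from~\eqref{eq: plucker ideal}, namely $f_{ijkl} = p_{ij}p_{kl}-p_{ik}p_{jl}+p_{il}p_{jk}$ for each $1 \le i<j<k<l \le m$. If $w \in \mathcal{T}(I_{2,m})$, then $\init_w(I_{2,m})$ contains no monomial, so in particular $\init_w(f_{ijkl})$ is not a monomial for any $i<j<k<l$. This forces the minimum of the three numbers $w_{ij}+w_{kl}$, $w_{ik}+w_{jl}$, $w_{il}+w_{jk}$ to be attained at least twice. Setting $d_{ij} := -w_{ij}$, this is equivalent to the condition that the \emph{maximum} of $d_{ij}+d_{kl}$, $d_{ik}+d_{jl}$, $d_{il}+d_{jk}$ be attained at least twice, which is precisely Buneman's four-point condition for the vector $\underline{d}$.

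Next I would invoke Buneman's classical characterization from phylogenetic combinatorics: a vector $\underline{d}\in\R^{\binom{m}{2}}$ satisfies the four-point condition on every quadruple of leaves if and only if there is a phylogenetic tree $\tree$ on $[m]$ with a real-valued length function on its edges whose induced tree distance coincides with $\underline{d}$. I would prove this standard result by induction on $m$: for $m=4$ the tree is read off from which of the three sums is strictly maximal; for the inductive step one identifies, using the four-point condition, a pair of leaves that are ``cherries'' sharing a common neighbor, contracts them, and applies the inductive hypothesis to the reduced set of $m-1$ leaves, then reinserts the pendant edges using the values $d_{\bullet,\cdot}$.

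The remaining step, and the main obstacle, is the converse implication: every tree distance $\underline{d}$ actually arises from some $w=-\underline{d}\in\mathcal{T}(I_{2,m})$, which requires showing that the three-term Pl\"ucker quadrics form a \emph{tropical basis} for $I_{2,m}$, so that absence of monomials in the initial forms of the $f_{ijkl}$ alone forces absence of monomials in $\init_w(I_{2,m})$. I would handle this via the explicit parametrization of $Gr(2,m)$: given a phylogenetic tree and edge length assignment, one constructs a point of the torus $\tilde{Gr}(2,m)\cap(K^\ast)^{\binom{m}{2}}$ over a Puiseux-series field $K$ whose coordinatewise valuation is precisely $w$, so that the Fundamental Theorem of Tropical Geometry \cite[Theorem 3.2.3]{MaclaganSturmfels} together with Lemma~\ref{lemma: KM and MS def equiv} places $w$ in $\mathcal{T}(I_{2,m})$. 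Concretely, one parametrizes each Pl\"ucker coordinate $p_{ij}$ by a monomial in the edge-length parameters determined by the path from $i$ to $j$, and verifies directly that this monomial parametrization satisfies all Pl\"ucker relations and realizes the prescribed valuation. The delicate point is carrying out this construction uniformly across all trees and checking the valuation computation, but once done it completes both inclusions.
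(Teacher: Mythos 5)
The paper offers no proof of this statement at all --- it is quoted directly from \cite[Theorem 4.3.5]{MaclaganSturmfels} --- so the comparison is really with the proof in the cited sources, and your outline does follow that standard Speyer--Sturmfels route: tropicalize the three-term relations of \eqref{eq: plucker ideal} to obtain the four-point condition, invoke (and reprove) Buneman's tree-metric theorem, and then treat the completeness direction separately. The first two steps are fine as sketched, including the sign flip $d_{ij}=-w_{ij}$ converting ``min attained twice'' into ``max attained twice.''

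The genuine gap is in your ``concrete'' argument for the hard inclusion. You propose to set $p_{ij}=\prod_{\varepsilon\in P_{ij}}t_\varepsilon$, the monomial over the edges of the path from $i$ to $j$, and to ``verify directly that this monomial parametrization satisfies all Pl\"ucker relations.'' It does not. For a quartet with tree topology $ij|kl$ the two crossing products $p_{ik}p_{jl}$ and $p_{il}p_{jk}$ become the \emph{same} monomial (both paths together cover the same multiset of edges), but $p_{ij}p_{kl}$ is a different monomial, so the three-term relation evaluates to $p_{ij}p_{kl}\neq 0$. What your map parametrizes is the toric variety $V(\init_{C_\tree}(I_{2,m}))$, i.e.\ the degenerate fiber, not the Grassmannian; exhibiting a point of the \emph{initial} degeneration with valuation $w$ certifies nothing about $w\in\mathcal{T}(I_{2,m})$, which is precisely the prevariety-versus-variety issue you set out to resolve. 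Two standard repairs: (a) build an honest point of $Gr(2,m)$ over a Puiseux-series field, e.g.\ a $2\times m$ matrix whose columns are $(1,a_i)^T$ with the $a_i$ chosen so that $\val(a_i-a_j)$ equals the prescribed weight of $p_{ij}$ (up to the lineality space); the minors $a_j-a_i$ are then not monomials but have the right valuations, and the Fundamental Theorem together with Lemma~\ref{lemma: KM and MS def equiv} applies --- this is what \cite{SpeyerSturmfels} do; or (b) avoid the Fundamental Theorem and argue Gr\"obner-theoretically that for each trivalent tree $\tree$ the initial ideal $\init_{C_\tree}(I_{2,m})$ is the prime binomial ideal generated by the $p_{ik}p_{jl}-p_{il}p_{jk}$ (cf.\ Lemma~\ref{lem:allprime}); a prime ideal containing no variable contains no monomial, so all of $C_\tree$ lies in $\mathcal{T}(I_{2,m})$. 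One smaller point: the version of Buneman's theorem you prove must match the definition of the space of phylogenetic trees actually intended here, with non-negative lengths on \emph{interior} edges and arbitrary real lengths on pendant edges; if an interior edge is allowed negative length, the four-point condition fails for the corresponding quartet, so the two sets would not coincide.
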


We now briefly describe the fan structure of $\mathcal{T}(I_{2,m}) \subseteq \R^{\binom{m}{2}}$. For details and proofs see \cite{MaclaganSturmfels}. 
The maximal cones of $\mathcal{T}(I_{2,m})$ are in bijective correspondence with the set of trivalent trees on $[m]$, where a tree is \textbf{trivalent} if all the interior vertices are incident to exactly three edges.
We label the coordinates in $\R^{\binom{m}{2}}$ by the subsets $J$ of $[m]$ of cardinality $2$, corresponding naturally to the Pl\"ucker coordinates $p_J$. For such a subset $J$, let $e_J$ denote the standard basis (``indicator'') vector with a $1$ in the coordinate labelled by $J$ and $0$'s elsewhere.
The lineality space \footnote{The lineality space of $\mathcal{T}(I)$ for an ideal $I$ is the subspace of $w \in \R^n$ such that $\init_w(I)=I$.}
 $L$ is given by 
 \[
 L = \mathrm{span}\left( \sum_{J :i \in J} e_J \, \mid \, 1 \leq i \leq m \right)
 \]
 and this $m$-dimensional subspace is contained in all cones of $\mathcal{T}(I_{2,m})$. 
 The ideal $I_{2,m}$ is a homogeneous ideal with respect to the usual $\Z$-grading where $\deg(p_J)=1$ for each Pl\"ucker coordinate $p_J$, so we also note that $L$ contains the vector $\mathbb{1} := (1,1,1,\ldots,1) \in \R^{\binom{m}{2}}$.
Next, let $\tree$ be a trivalent tree and $\varepsilon$ be an edge of $\tree$. 
The choice of $\varepsilon$ naturally yields a partition of the $m$ leaves into two subsets $J_\varepsilon$ and $J_\varepsilon^c$, given by the decomposition of the vertices obtained by removing $\varepsilon$. We can define a corresponding tree distance
	\begin{equation}\label{eq: interior edge tree distance}
	\underline{d}_\varepsilon:= \sum_{i \in J_\varepsilon, j \in J_\varepsilon^c} e_{ij},
	\end{equation} 
obtained by assigning length $1$ to the edge $\varepsilon$.
By Theorem~\ref{thm:phyloTrees}, $-\underline{d}_\varepsilon\in\mathcal{T}(I_{2,m})$.
However, we would like to keep the entries positive.
In the case in which $\epsilon$ is a pendant edge, since $-\underline{d}_\varepsilon\in L$ we will use $\underline{d}_\varepsilon$ instead.
In the case in which $\epsilon$ is an interior edge, since $\mathbb{1}\in L$ we will use the vector $\mathbb{1}-\underline{d}_\varepsilon\in\mathcal{T}(I_{2,m})$ instead.
The maximal cone $C_\tree$ corresponding to such a tree $\tree$ is isomorphic to $\R_{\geq 0}^{m-3} \times \R^m$ and can be described explicitly as 
\[
C_\tree = \cone\left\{ \mathbb{1}-\sum_{i \in J_\varepsilon, j \in J_\varepsilon^c} e_{ij} \, \bigg\vert \, \textup{ $\varepsilon$ an interior edge } \right\} \times \mathrm{span}\left\{ \sum_{J : i \in J} e_J \, \bigg\vert \, 1 \leq i \leq m \right\} \cong \R_{\geq 0}^{m-3} \times \R^m
\]
where $\cone$ denotes the non-negative span of the given set of vectors, and $\mathrm{span}$ denotes the usual $\R$-span \cite[Proposition 4.3.10]{MaclaganSturmfels}.

\subsection{Newton-Okounkov bodies of adjacent maximal-dimensional prime cones in $\mathcal{T}(I_{2,m})$}

We now describe the Newton-Okounkov bodies and value semigroups associated to adjacent maximal-dimensional prime cones in $\mathcal{T}(I_{2,m})$. 
To begin, we need to know the set of maximal-dimensional prime cones in $\mathcal{T}(I_{2,m})$. The following is known. 

\begin{lemma} 	(\cite[Remark 4.3.11]{MaclaganSturmfels})\label{lem:allprime}
 Let $\tree$ be a trivalent tree on $[m]$ and let $C_\tree$ be the associated cone in $\mathcal{T}(I_{2,m})$. Then the initial ideal $\init_{C_\tree}(I_{2,m})$ corresponding to $C_\tree$ is a prime ideal. Equivalently, all the maximal cones of $\mathcal{T}(I_{2,m})$ are prime in the sense of Definition~\ref{def: prime cone}. 
 \end{lemma}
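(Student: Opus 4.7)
The plan is to prove primality by exhibiting, for each trivalent tree $\tree$, an explicit toric parametrization of the corresponding initial ideal, using the four-point (Buneman) condition for tree metrics as the combinatorial bridge. Throughout, take a weight vector $w$ in the relative interior of $C_\tree$; after subtracting an element of the lineality space, $w$ can be identified (up to sign) with a tree distance $\underline{d}$ coming from assigning strictly positive lengths to the interior edges of $\tree$.

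The first step is to compute $\init_w$ on the quadratic Pl\"ucker generators of~\eqref{eq: plucker ideal}. For any four leaves $i<j<k<l$, the four-point condition says that among the three sums $d_{ij}+d_{kl}$, $d_{ik}+d_{jl}$, $d_{il}+d_{jk}$ there is a unique maximum attained by exactly two of them, while the third is strictly smaller. Translating back to the original (positive-orientation) weight vector, this means exactly two of the three monomials in the Pl\"ucker relation attain the minimum $w$-weight, so $\init_w(p_{ij}p_{kl}-p_{ik}p_{jl}+p_{il}p_{jk})$ is an honest binomial for each $\{i,j,k,l\}$. I would then argue that these binomials together generate $\init_{C_\tree}(I_{2,m})$, either by refining $w$ to a monomial order and checking Buchberger's criterion on the three-term Pl\"ucker relations (they form a Gr\"obner basis with respect to a suitable diagonal term order, a standard fact for the Pl\"ucker ideal of $Gr(2,m)$), or by a Hilbert-function comparison between $I_{2,m}$ and the binomial ideal produced.

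For primality, I would construct the tree parametrization. Let $E(\tree)$ denote the edge set of $\tree$, and define
\[
\phi:\C[p_J:J\subset[m],\lvert J\rvert=2]\to \C\left[t_\varepsilon:\varepsilon\in E(\tree)\right], \qquad \phi(p_{ij}) \;=\; \prod_{\varepsilon\in\mathrm{path}_\tree(i,j)} t_\varepsilon.
\]
The kernel $\ker\phi$ is a toric ideal, hence prime. A direct check using the four-point condition shows that each binomial $\init_w(p_{ij}p_{kl}-p_{ik}p_{jl}+p_{il}p_{jk})$ lies in $\ker\phi$, because the two surviving monomials correspond to pairings whose path-products over $\tree$ coincide. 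Thus $\init_{C_\tree}(I_{2,m})\subseteq\ker\phi$. The reverse inclusion (and thus equality with the prime ideal $\ker\phi$) would follow from a Krull-dimension count: both $\C[p_J]/I_{2,m}$ and $\C[p_J]/\ker\phi$ have Krull dimension $2m-3$, as does $\C[p_J]/\init_{C_\tree}(I_{2,m})$ (since Gr\"obner degeneration preserves Hilbert function), so the inclusion of prime ideals, together with matching dimensions on the quotients, forces equality.

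The main obstacle is the equality $\init_{C_\tree}(I_{2,m})=\ker\phi$ rather than mere inclusion: I expect the cleanest route is the Hilbert-function/dimension argument sketched above, but a more hands-on verification would require showing that the quadratic binomials extracted from the Pl\"ucker relations already generate the full toric ideal $\ker\phi$. This reduces to an elementary but slightly involved combinatorial claim about binomial moves on paths in a tree, essentially the content of the Speyer--Sturmfels description of $\trop(Gr(2,m))$ which is precisely what is invoked in \cite[Remark 4.3.11]{MaclaganSturmfels}.
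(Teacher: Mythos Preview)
The paper does not supply its own proof of this lemma; it is stated as a citation to \cite[Remark 4.3.11]{MaclaganSturmfels} and used as a known input. Your proposal is essentially the argument behind that reference: identify $\init_{C_\tree}(I_{2,m})$ with the toric ideal of the edge-path parametrization of the tree, using the four-point condition to compute initial forms of the Pl\"ucker generators.

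One point deserves tightening. Your dimension step reads ``the inclusion of prime ideals, together with matching dimensions on the quotients, forces equality,'' but primality of $\init_{C_\tree}(I_{2,m})$ is precisely what is being proved, so this is circular as written; and for a general inclusion $I\subseteq P$ with $P$ prime, equality of Krull dimensions of the quotients does not force $I=P$ (the ideal $I$ could have other irreducible components). The argument that actually works is the one you mention parenthetically: Gr\"obner degeneration preserves the full Hilbert function, so once you know the quadratic Pl\"ucker relations are a Gr\"obner basis for $I_{2,m}$ with respect to a monomial refinement of $w$, their initial binomials generate $\init_{C_\tree}(I_{2,m})$, and a direct check shows these same binomials generate $\ker\phi$. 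You correctly flag this equality as the main obstacle in your final paragraph, so the issue is acknowledged rather than missed; just be sure the write-up does not rest on the circular dimension claim.
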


 From Lemma~\ref{lem:allprime} it follows that we can apply Theorem~\ref{theorem:kavehmanon theorem 4} to any maximal cone $C_\tree$ in $\mathcal{T}(I_{2,m})$. Doing so involves an explicit choice of linearly independent vectors in the relevant cones. We wish to describe the Newton-Okounkov bodies concretely and also to compare the Newton-Okounkov bodies of adjacent maximal-dimensional prime cones, so to facilitate our computations, we will make a systematic choice of these vectors.

We begin by characterizing adjacency of the maximal-dimensional prime cones. 
It is known that two maximal-dimensional prime cones $C_{\tree_1}$ and $C_{\tree_2}$ are adjacent exactly if there exists an interior edge in $\tree_1$ and an interior edge in $\tree_2$, such that we obtain the same tree after contracting these edges in their corresponding tree. Figure~\ref{fig: tree flip} shows what this looks like locally.

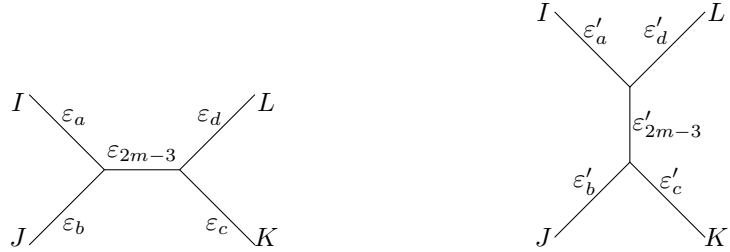
\begin{figure}[h] 
	\begin{tikzpicture}
	\draw (0,1)--(1,0)--(0,-1);
	\draw (1,0)--(2,0);
	\draw (3,1)--(2,0)--(3,-1);
	\node at (1.5,.2) {$\varepsilon_{2m-3}$};
	\node at (.6,.7) {$\varepsilon_a$};
	\node at (.6,-.75) {$\varepsilon_b$};
	\node at (2.4,.7) {$\varepsilon_d$};
	\node at (2.5,-.75) {$\varepsilon_c$};
	\node at (-.15,.9) {$I$};
	\node at (-.15,-.9) {$J$};
	\node at (3.15,-.9) {$K$};
	\node at (3.15,.9) {$L$};
	\end{tikzpicture}
	\hspace{3cm}
	\begin{tikzpicture}
	\draw (0,1)--(1,0)--(2,1);
	\draw (1,0)--(1,-1);
	\draw (0,-2)--(1,-1)--(2,-2);
	\node at (1.5,-.5) {$\varepsilon'_{2m-3}$};
	\node at (.55,.75) {$\varepsilon'_a$};
	\node at (.4,-1.25) {$\varepsilon'_b$};
	\node at (1.35,.75) {$\varepsilon'_d$};
	\node at (1.55,-1.25) {$\varepsilon'_c$};
	\node at (-.15,1) {$I$};
	\node at (-.15,-2) {$J$};
	\node at (2.15,-2) {$K$};
	\node at (2.15,1) {$L$};
	\end{tikzpicture}
	\caption{The figure on the left schematically represents $\tree_1$ and the right figure represents $\tree_2$. If $\tree_1$ and $\tree_2$ are adjacent, then they are identical except on one interior edge; in the figure these are labelled $\varepsilon_{2m-3}$ and $\varepsilon'_{2m-3}$. There also exists a decomposition $I \sqcup J \sqcup K \sqcup L = [m]$ of the leaves such that the trees schematically look as above, where the edge $\varepsilon_i$ leading to $I$ indicates that the vertices to which $\varepsilon_i$ leads lie precisely in $I \subseteq [m]$, and similarly for the others. It is understood that $\tree_1$ and $\tree_2$ are identical except near the edges $\varepsilon_{2m-3}$ and $\varepsilon'_{2m-3}$.} 
	
	\label{fig: tree flip} 
	\end{figure}

Now suppose $C_{\tree_1}$ and $C_{\tree_2}$ are adjacent maximal-dimensional prime cones. Fix $\tree \in \{\tree_1, \tree_2\}$. We choose linearly independent vectors $u_1,u_2,\ldots,u_{2m-3} \in C_{\tree}$ as follows. 
 For the purposes of this discussion, we assume that the edges of $\tree$ are labelled $\{\varepsilon_1, \varepsilon_2,\ldots,\varepsilon_{2m-3}\}$ where the first $m$ edges $\varepsilon_1,\ldots,\varepsilon_m$ are the pendant edges incident to the leaves labelled $1,2,\ldots, m$ respectively, the last $m-3$ edges $\varepsilon_{m+1},\ldots,\varepsilon_{2m-3}$ are the interior edges, and moreover, the very last interior edge $\varepsilon_{2m-3}$ (for both $C_{\tree_1}$ and for $C_{\tree_2}$) corresponds to ``the'' edge by which the two trees differ, as in Figure~\ref{fig: tree flip} above. 
 As discussed in Section~\ref{sec:fibers}  we always choose $u_1 = \mathbb{1} := (1,1,1,\ldots,1)$ so that the corresponding weight valuation is homogeneous with respect to the (usual) degree. 
 Next, for $2 \leq i \leq m$ we choose $u_i$ to be the tree distance $\underline{d}_{\varepsilon_i}=\sum_{i \in J} e_J$.
 Finally, for $m+1 \leq a \leq 2m-3$, we let $u_a =\mathbb{1}-\underline{d}_{\varepsilon_a}$. We then obtain a $(2m-3) \times \binom{m}{2}$ matrix $M_{\tree}$ whose $i$-th row is the vector $u_i$. 
By construction, $M_{\tree_1}$ and $M_{\tree_2}$ are identical except on the last (bottom) row.

\begin{example}\label{example: Gr24}
Let $m=4$. In this case the Pl\"ucker coordinates for $Gr(2,4)$ are given by the $6 = \binom{4}{2}$ coordinates $p_{12}$, $p_{13}$, $p_{14}$, $p_{23}$, $p_{24}$, $p_{34}$; throughout this discussion we assume that these $6$ Pl\"ucker coordinates are ordered as in the list just given. Let $\tree_1$ (resp. $\tree_2$) be the trivalent tree on the LHS (resp. RHS) in Figure~\ref{fig: trivalent for Gr24}. 

\begin{figure}[h] 
\begin{tikzpicture}
\begin{scope}[scale=0.5]
	\draw (0,1)--(1,0)--(0,-1);
	\draw (1,0)--(2,0);
	\draw (3,1)--(2,0)--(3,-1);
	\node at (-.2,1) {$1$};
	\node at (-.2,-1) {$2$};
	\node at (3.2,1) {$4$};
	\node at (3.2,-1) {$3$};
	\node at (1.5,.3) {$\varepsilon_5$};
\end{scope}
\end{tikzpicture}
\hspace{2cm} 
\begin{tikzpicture}
\begin{scope}[scale=0.5]
	\draw (0,1)--(1,0)--(2,1);
	\draw (1,0)--(1,-1);
	\draw (0,-2)--(1,-1)--(2,-2);
	\node at (-.2,1) {$1$};
	\node at (-.2,-2) {$2$};
	\node at (2.2,-2) {$3$};
	\node at (2.2,1) {$4$};
	\node at (1.4,-.5) {$\varepsilon'_5$};
\end{scope}
	\end{tikzpicture}
\caption{Two trivalent trees for $\mathcal{T}(I_{2,4})$.}
\label{fig: trivalent for Gr24} 
\end{figure}
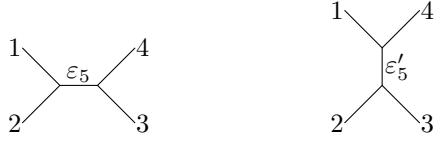 
The interior edge of $\tree_1$ partitions the set $[4]$ into the subsets $J=\{1,2\}$ and $J^c=\{3,4\}$, so $u_5 = \mathbb{1}- \sum_{i \in J, j \in J^c} e_{ij} = \mathbb{1}- (e_{13}+e_{14}+e_{23}+e_{24}) = (1,0,0,0,0,1)$. 
For $\tree_2$ the partition is $J=\{1,4\}$ and $J^c = \{2,3\}$. The matrices can be computed to be

	\[
M_{\tree_1} = \begin{pmatrix} 
1 & 1 & 1 & 1 & 1 & 1 \\
1 & 0 & 0 & 1 & 1 & 0 \\
0 & 1 & 0 & 1 & 0 & 1 \\
0 & 0 & 1 & 0 & 1 & 1 \\
1 & 0 & 0 & 0 & 0 & 1\\ 
\end{pmatrix} 
\quad \quad \textup{ and } \quad \quad 
M_{\tree_2}=\begin{pmatrix} 1 & 1 & 1 & 1 & 1 & 1 \\
1 & 0 & 0 & 1 & 1 & 0 \\
0 & 1 & 0 & 1 & 0 & 1 \\
0 & 0 & 1 & 0 & 1 & 1 \\
	0 & 0 & 1 & 1 & 0 & 0 \\ 
	\end{pmatrix}
	\]
so they are identical except on the last row. 	
\end{example}

By Theorem~\ref{theorem:kavehmanon theorem 4} and~\eqref{eq: NOBY as convex hull} we know that for any $\tree$ we have 
	\begin{align*}
	P_\tree&:=P(A, \nu_{M_\tree})=\R_{\geq 0}\textup{-span of the columns of } M_\tree, \\
	\Delta_\tree&:=\Delta(A, \nu_{M_\tree}) = \textup{ convex hull of the columns of } M_\tree.
	\end{align*}
Note that $\Delta_\tree=P_\tree\cap\{x_1=1\}$.
Furthermore, denoting by $M_{\tree_1\tree_2}$ the matrix resulting from deleting the bottom row of $M_{\tree_1}$ we also have 
	\begin{align*}
	P_{\tree_1\tree_2}&:=P(A, \nu_{M_{\tree_1 \tree_2}}) = \R_{\geq 0}\textup{-span of the columns of } M_{\tree_1\tree_2}, \\
	\Delta_{\tree_1\tree_2}&:= \Delta(A, \nu_{M_{\tree_1 \tree_2}}) = \textup{ convex hull of the columns of } M_{\tree_1\tree_2}.
	\end{align*}

\subsection{The geometric wall-crossing maps for $Gr(2,m)$}\label{subsec: geometric Gr2m}\label{subsec: geometric crossing Gr2m}

In this section, we describe the geometric wall-crossing maps for $Gr(2,m)$ for two adjacent maximal-dimensional prime cones $C_1$ and $C_2$ corresponding to trivalent trees $\tau_1$ and $\tau_2$. Let $M_{\tau_1}$ and $M_{\tau_2}$ denote the corresponding choices of matrices described in the previous section.

To proceed, it will be convenient to first give the inequalities which cut out the cone $P_\tau$ for a given trivalent tree $\tau$. 
In order to do so, we make a change of coordinates $\gamma: \R^{2m-3} \to \R^{2m-3}$ which transforms $P_\tree$ to a cone $\widetilde{P}_{\tree}$. It will turn out that $\widetilde{P}_{\tree}$ is more compatible with the combinatorics of phylogenetic trees, and moreover, the inequalities defining $\widetilde{P}_\tree$ are known from the work of Nohara and Ueda \cite{NoharaUeda}. 

We begin by explicitly defining the polytope $\widetilde{P}_\tree$; from this we can deduce the transformation $\gamma$. For a trivalent tree $\tree$ we define a $(2m-3) \times \binom{m}{2}$ matrix $\widetilde{M}_\tree$ 
by taking its $a$-th row to be the tree distance $\underline{d}_{\varepsilon_a}$ obtained by assigning $1$ to edge $\varepsilon_a$ and $0$ elsewhere. Labelling columns of $\widetilde{M}_\tree$ by pairs of leaves $\{i,j\}$ and rows by $a$, the matrix entries $c^{ij}_a$ of $\widetilde{M}_\tree$ can then be seen to satisfy
\begin{equation}\label{eq: cijk def} 
	c^{ij}_a=\begin{cases} 
	1,\quad \text{ if the (unique) path from $i$ to $j$ contains edge } \varepsilon_a\\
	0,\quad \text{ otherwise. }
	\end{cases}
	\end{equation}

We now define
\begin{equation}\label{eq: def tildePtree}
\widetilde{P}_\tree := \cone \{c^{ij} \mid 1 \leq i < j \leq m\} \subseteq \R^{2m-3},
\end{equation} 
i.e. $\widetilde{P}_\tree$ is the cone spanned in $\R^{2m-3}$ by the columns of $\widetilde{M}_\tree$. Similarly we define 
\begin{equation}\label{eq: def tildeDelta} 
\widetilde{\Delta}_\tree :=   \textup{ convex hull of the columns of } \widetilde{M}_\tree
\end{equation}
and
\begin{equation}\label{eq: def tildeS} 
\widetilde{S}_\tree :=   \textup{semigroup generated by the columns of } \widetilde{M}_\tree.
\end{equation}

We have the following.

\begin{lemma}\label{lemma: gamma} 
The linear map $\gamma: \R^{2m-3} \to \R^{2m-3}$ defined by 
\begin{align}
	\gamma:\R^{2m-3}&\to\R^{2m-3} \label{eq: gamma}\\
	(z_1,\ldots,z_{2m-3})&\mapsto\left(\frac{1}{2}(z_1+\cdots+z_m),z_2,\ldots,z_m,\frac{1}{2}(z_1+\cdots+z_m)-z_{m+1},\ldots,\frac{1}{2}(z_1+\cdots+z_m)-z_{2m-3}\right) \nonumber
	\end{align}
	is a linear isomorphism and 
maps the $ij$-th column of $\widetilde{M}_{\tree}$ to the $ij$-th column of $M_{\tree}$. In particular, $\gamma$
restricts to bijections $\widetilde{S}_\tree \to S_\tree$ and $\widetilde{P}_\tree\rightarrow P_\tree$.
\end{lemma}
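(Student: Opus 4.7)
The plan is to verify both assertions in three steps: first, exhibit an explicit linear inverse to $\gamma$; second, compute $\gamma(c^{ij})$ coordinate by coordinate and match it with the $\{i,j\}$-th column of $M_\tree$; third, deduce the bijections on $\widetilde{S}_\tree$ and $\widetilde{P}_\tree$ formally from linearity together with the fact that these objects are the semigroup and the cone generated by the columns $c^{ij}$.

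For step 1, I would simply solve $\gamma(z) = y$ directly: the defining formula for $\gamma$ gives $z_i = y_i$ for $2 \leq i \leq m$, $z_a = y_1 - y_a$ for $m+1 \leq a \leq 2m-3$, and then $z_1 = 2y_1 - (y_2 + \cdots + y_m)$. These formulas describe a linear map which is a two-sided inverse of $\gamma$, so $\gamma$ is an isomorphism of $\R^{2m-3}$.

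For step 2, the key combinatorial observation is that for any two distinct leaves $i,j$ of $\tree$, the unique path in $\tree$ joining them contains exactly two pendant edges, namely the pendant edges incident to $i$ and $j$. Consequently, among the entries $c^{ij}_1, \ldots, c^{ij}_m$ of the pendant rows of $\widetilde{M}_\tree$, exactly two equal $1$ and the rest equal $0$, so $\frac{1}{2}(c^{ij}_1 + \cdots + c^{ij}_m) = 1$; this matches the first entry of the $\{i,j\}$-th column of $M_\tree$, since the first row of $M_\tree$ is $\mathbb{1}$. For $2 \leq k \leq m$, both $c^{ij}_k$ and the corresponding entry of $M_\tree$ simply record whether $k \in \{i,j\}$. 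For an interior edge $\varepsilon_a$ with $m+1 \leq a \leq 2m-3$, the path from $i$ to $j$ traverses $\varepsilon_a$ precisely when $\{i,j\}$ straddles the two-block partition induced by removing $\varepsilon_a$ from $\tree$; this shows $1 - c^{ij}_a = (\mathbb{1} - \underline{d}_{\varepsilon_a})_{\{i,j\}}$, which is exactly the value $\frac{1}{2}(c^{ij}_1 + \cdots + c^{ij}_m) - c^{ij}_a$ prescribed by $\gamma$ in coordinate $a$. Thus $\gamma(c^{ij})$ equals the $\{i,j\}$-th column of $M_\tree$ on the nose.

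Step 3 is then immediate: since $\widetilde{P}_\tree = \cone\{c^{ij}\}$ and $\gamma$ is linear, we get $\gamma(\widetilde{P}_\tree) = \cone\{\gamma(c^{ij})\} = P_\tree$, and similarly $\gamma(\widetilde{S}_\tree) = S_\tree$. Both restrictions are bijective because $\gamma$ itself is a bijection of the ambient space. The only potential obstacle is bookkeeping: keeping the indexing conventions straight (which pendant edge corresponds to which leaf, and how each interior edge induces a partition of $[m]$) throughout the coordinate-by-coordinate comparison. No genuinely difficult argument is required; the lemma is essentially a change-of-basis computation packaged in the combinatorics of $\tree$.
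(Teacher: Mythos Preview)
Your proposal is correct and follows essentially the same approach as the paper: a coordinate-by-coordinate verification that $\gamma(c^{ij})$ agrees with the $\{i,j\}$-th column of $M_\tree$, using that exactly two pendant entries of $c^{ij}$ equal $1$, followed by the formal deduction for the cone and semigroup. You are in fact slightly more thorough than the paper, which asserts that $\gamma$ is a linear isomorphism without exhibiting the inverse, whereas your Step~1 writes it down explicitly.
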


\begin{proof} 
Recall that our convention is to order the edges so that $\varepsilon_1, \varepsilon_2, \cdots, \varepsilon_m$ are the pendant edges, with edge $\varepsilon_i$ adjacent to leaf $i$, and the edges $\varepsilon_{m+1},\cdots, \varepsilon_{2m-3}$ are the interior edges. In order to show that $\gamma$ takes $c^{ij}$ to the corresponding $ij$-th column of $M_\tree$, we check each coordinate of $\gamma(c^{ij})$. 

Fix a column $c^{ij}$. First, we consider the first coordinate. By definition, each column in $M_\tree$ has first entry equal to $1$. Therefore, to show that $\gamma(c^{ij})$ agrees with the corresponding column in $M_\tree$, we must show that the function $\frac{1}{2}(z_1+\cdots+z_m)$ (here the $z_k$ denote the standard coordinate functions in $\R^{2m-3}$) evaluates to $1$ on $c^{ij}$. By~\eqref{eq: cijk def} we see that the $i$-th and $j$-th coordinates of $c^{ij}$, corresponding to the pendant edges $\varepsilon_i$ and $\varepsilon_j$ respectively, are equal to $1$, since these edges are contained in the path connecting $i$ and $j$. Moreover, no other pendant edge is contained in this path, so all the other coordinates corresponding to pendant edges are equal to $0$. Therefore, $z_1+z_2+\cdots + z_m=2$ on $c^{ij}$ and hence $\frac{1}{2}(z_1+\cdots+z_m) = 1$, as desired. 

Second, we consider the coordinates corresponding to the pendant edges $\varepsilon_a$ for $2 \leq a \leq m$. By definition, the $a$-th row of $M_\tree$ is the tree distance $\underline{d}_{\varepsilon_a}$, which is equal to the $a$-th row of $\widetilde{M}_\tree$. Hence the entries are in fact equal, so the identity map on those coordinates, namely $z_2, \ldots, z_m$, takes the corresponding entries of $c^{ij}$ to those of the columns of $M_\tree$ as desired. 

Finally, consider the coordinates corresponding to interior edges, i.e. the $a$-th coordinates for $m+1 \leq a \leq 2m-3$. From the construction of $M_\tree$ we know that the $a$-th entry of the $ij$-th column of $M_\tree$ is $1-c^{ij}_a$. Therefore we need to show that $\frac{1}{2}(z_1+z_2+\cdots+z_m) - z_a$ evaluates on $c^{ij}$ to $1-c^{ij}_a$. But we already saw above that $\frac{1}{2}(z_1+z_2+\cdots+z_m)=1$ on $c^{ij}$, so the claim follows. 

This shows that $\gamma$ takes the columns of $\widetilde{M}_\tree$ to the corresponding columns of $M_\tree$, as desired. The second claim of the lemma follows immediately from the definitions of $\widetilde{P}_\tree$ and $P_\tree$. 
\end{proof}

\begin{remark} The proof of the lemma above shows also that $\widetilde{\Delta}_\tree$ is the intersection of $\widetilde{P}_\tree$ with the hyperplane 
\[
\left\{ \frac{1}{2} (z_1 + z_2 + \cdots + z_m) = 1\right\} = \gamma^{-1}(\{z_1=1\}).
\]
It follows that $\gamma$ also restricts to a bijection $\gamma: \widetilde{\Delta}_\tree \to \Delta_\tree$ which can be written explicitly as $(z_1,\ldots, z_{2m-3}) \mapsto (1, z_2, \ldots, z_m, 1-z_{m+1}, \ldots, 1-z_{2m-3})$. 
\end{remark}

In order to give the inequality description of $P_\tree$ it now suffices to give an inequality description of $\widetilde{P}_\tree$ and then to translate this back to $P_\tree$ using the coordinate change $\gamma$. In fact, the half-spaces defining $\widetilde{P}_\tree$ were given by Nohara and Ueda. We have the following, which follows from \cite[Theorem 4.9]{NoharaUeda}.

\begin{theorem}\label{thm:inequalities} 
The polytope $\widetilde{\Delta}_\tree$ is the intersection of the half-spaces defined by the inequalities 
	\begin{align}\label{eq:eqhyperplane}
	&z_1+\cdots+z_m =2\qquad \text{ and } \\\label{eq:ineqs}
	&\lvert z_b- z_c \rvert \le z_a\le z_b+z_c,
	\end{align}
where ${\varepsilon_a}, {\varepsilon_b}, {\varepsilon_c}$ are incident to a single interior vertex of $\tree$, and these inequalities run over all interior vertices of $\tree$. 
The cone $\widetilde{P}_\tree$ is defined as the intersection of the inequalities of \eqref{eq:ineqs}.
\end{theorem}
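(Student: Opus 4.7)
\emph{My plan is as follows.} I would first verify that every column $c^{ij}$ of $\widetilde{M}_\tree$ satisfies both the hyperplane equation \eqref{eq:eqhyperplane} and the triangle-type inequalities \eqref{eq:ineqs}; this gives the easy inclusion of $\widetilde{\Delta}_\tree$ (respectively $\widetilde{P}_\tree$) into the polyhedron (respectively cone) cut out by these constraints. The equation is immediate, since the unique path in $\tree$ between leaves $i$ and $j$ meets exactly two pendant edges, namely $\varepsilon_i$ and $\varepsilon_j$, so $\sum_{a=1}^{m} c^{ij}_a = 2$. For a fixed interior vertex $V$ with incident edges $\varepsilon_a, \varepsilon_b, \varepsilon_c$, removing $V$ from $\tree$ decomposes $\tree \setminus \{V\}$ into three subtrees $T_a, T_b, T_c$, each attached to $V$ through one of $\varepsilon_a, \varepsilon_b, \varepsilon_c$ respectively. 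A direct case analysis on which subtrees contain $i$ and $j$ shows that the triple $(c^{ij}_a, c^{ij}_b, c^{ij}_c)$ is one of $(0,0,0)$, $(1,1,0)$, $(1,0,1)$, or $(0,1,1)$, each of which visibly satisfies $\lvert z_b - z_c \rvert \leq z_a \leq z_b + z_c$ and its cyclic permutations.

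For the reverse inclusion in the description of $\widetilde{\Delta}_\tree$, I would invoke \cite[Theorem 4.9]{NoharaUeda} directly: the polytope appearing in their description of a Gelfand-Cetlin-type integrable system for $Gr(2,m)$ parametrized by the trivalent tree $\tree$ admits precisely this inequality description and is the convex hull of the same path-indicator vectors, so the translation between their setup and ours is purely notational. The cone description of $\widetilde{P}_\tree$ would then follow by homogenization. The inequalities \eqref{eq:ineqs} are linear and homogeneous and define a cone $K$ satisfying $K \cap \{z_1 + \cdots + z_m = 2\} = \widetilde{\Delta}_\tree$. A short propagation argument using $z_a \geq \lvert z_b - z_c \rvert \geq 0$ (applied iteratively by peeling cherries, since every trivalent tree on $\geq 2$ leaves contains a cherry) shows that any nonzero $z \in K$ has $z_1 + \cdots + z_m > 0$, so that every point of $K$ is a non-negative scalar multiple of a point in $\widetilde{\Delta}_\tree$; hence $K = \R_{\geq 0} \cdot \widetilde{\Delta}_\tree = \widetilde{P}_\tree$.

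\emph{The hard part} is the nontrivial inclusion in the polytope description, which is the substantive content of the theorem. Should one prefer to avoid quoting Nohara-Ueda, the natural alternative is induction on $m$: every trivalent tree with $m \geq 4$ leaves contains a cherry, and contracting a cherry produces a trivalent tree on $m-1$ leaves. The triangle inequalities at the cherry's interior vertex should allow one to reconstruct the two cherry-pendant coordinates from the rest of the data and reduce to the smaller case; but making this reconstruction compatible with the convex-combination structure of $\widetilde{\Delta}_\tree$ requires careful bookkeeping in how the inequalities at the cherry transform under contraction. This is precisely the obstacle that makes quoting \cite[Theorem 4.9]{NoharaUeda} the cleaner route.
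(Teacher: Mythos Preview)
Your approach is essentially the same as the paper's: both reduce the substantive content to \cite[Theorem 4.9]{NoharaUeda}. There are two minor differences worth noting.

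First, you say the translation from Nohara--Ueda's setup to ours is ``purely notational.'' The paper is more careful here: it points out that the lattice points in \cite[Section 6]{NoharaUeda} must be passed through the change of coordinates \cite[Equation (4.2)]{NoharaUeda}, after which they become $m/2$ times the columns of $\widetilde{M}_\tree$; so Nohara--Ueda's polytope $\Delta_\Gamma$ is $\frac{m}{2}\widetilde{\Delta}_\tree$, described in \cite{NoharaUeda} as a subset of the hyperplane $z_1+\cdots+z_{2m-3}=m/2$ rather than $z_1+\cdots+z_m=2$. This is not hard, but it is a genuine coordinate change and rescaling, not merely a renaming, and you should spell it out rather than wave it away.

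Second, you do more explicit work than the paper in two places: you verify the easy inclusion by a direct case analysis on the path-indicator triples at each interior vertex, and you give a cherry-peeling argument to pass from the polytope description to the cone description. The paper omits the first and calls the second ``straightforward'' (deriving the cone first from the Nohara--Ueda inequalities and then recovering the polytope as its level-$1$ slice). Your additions are correct and arguably make the proof more self-contained; the cherry-peeling argument in particular is a clean way to see that the homogeneous inequalities \eqref{eq:ineqs} force $z_1+\cdots+z_m>0$ on nonzero points, which the paper does not make explicit.
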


\begin{proof} 
As mentioned above, the statement of the theorem is essentially that of \cite[Theorem 4.9]{NoharaUeda}. However, a change of coordinates is required to deduce the above statement from \cite{NoharaUeda} so we explain this briefly here. For details we refer the reader to \cite{NoharaUeda}. In \cite[Section 6]{NoharaUeda}, the authors give a set of lattice points in $\R^{2m-3}$ whose convex hull is a polytope which they denote as $\Delta_\Gamma$. In \cite[Section 4]{NoharaUeda}, the authors perform a change of coordinates \cite[Equation (4.2)]{NoharaUeda}, and it is not hard to see that, under this change of coordinates, the lattice points whose convex hull is $\Delta_\Gamma$ get mapped to $m/2$ times the columns of our matrix $\widetilde{M}_\tree$. Therefore, under the change of coordinates \cite[Equation (4.2)]{NoharaUeda}, the polytope $\Delta_\Gamma$ of Nohara and Ueda is mapped to $\frac{m}{2}\widetilde{\Delta}_\tree$.
The equations of \cite[Theorem 4.9]{NoharaUeda} describe the inequalities of $\frac{m}{2}\widetilde{\Delta}_\tree$ as a subset of the hyperplane $z_1+\cdots+z_{2m-3}=m/2$ and therefore the cone $\widetilde{P}_\tree$ is the intersection of the inequalities of \eqref{eq:ineqs}.
The claim about $\widetilde{\Delta}_\tree$ now follows straightforwardly.
\end{proof}

We can now give explicit formulas for the geometric wall-crossing maps as in Section~\ref{sec:wall-crossing} for $Gr(2,m)$. Let $\tree_1$ and $\tree_2$ be two trivalent trees corresponding to two adjacent maximal-dimensional prime cones. The trees $\tree_1$ and $\tree_2$ agree everywhere except near one edge which we may take to be labelled as $\varepsilon_{2m-3}$, and that locally near $\varepsilon_{2m-3}$ the trees $\tree_1$ and $\tree_2$ are of the form given in Figure~\ref{fig: tree flip}. More specifically, we assume that $\tree_1$ looks locally near $\varepsilon_{2m-3}$ like the figure on the left in Figure~\ref{fig: tree flip} and $\tree_2$ is the figure on the right. Let $\widetilde{P}_{\tree_1 \tree_2}$ denote the projection of $\widetilde{P}_{\tree_1}$ (equivalently $\widetilde{P}_{\tree_2}$) to $\R^{2m-4}$, obtained by forgetting the last coordinate. Then, as in~\eqref{eq:NOBY-ineqs}, we may express $\widetilde{P}_{\tree_i}$ for $i=1,2$ as follows: 
	\begin{align*}
	&\widetilde{P}_{\tree_i} = \{ (v,z_{2m-3}) \, \vert \, v \in \widetilde{P}_{\tree_1\tree_2}, \,  \widetilde{\varphi}_i(v) \leq z_{2m-3} \leq \widetilde{\psi}_i(v) \} \\
	\end{align*} 
	for certain affine functions $\widetilde{\varphi}_i$ and $\widetilde{\psi}_i$. We have the following. 
	
\begin{lemma} 
In the setting above, we have 
	\begin{align*}
	 &\widetilde{\varphi}_1(v)=\max\{|z_a-z_b|,|z_c-z_d|\}, \qquad \widetilde{\psi}_1(v)=\min\{z_a+z_b,z_c+z_d\},\\
	 &\widetilde{\varphi}_2(v)=\max\{|z_a-z_d|,|z_b-z_c|\},\qquad \widetilde{\psi}_2(v)=\min\{z_a+z_d,z_b+z_c\}.
	\end{align*}
\end{lemma}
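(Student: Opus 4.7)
The plan is to apply Theorem~\ref{thm:inequalities} directly to each of $\tree_1$ and $\tree_2$, and read off which defining inequalities of $\widetilde{P}_{\tree_i}$ actually involve the variable $z_{2m-3}$. Since $\tree_i$ is trivalent, the edge $\varepsilon_{2m-3}$ (respectively $\varepsilon'_{2m-3}$) is interior and therefore incident to exactly two interior vertices of $\tree_i$. By Theorem~\ref{thm:inequalities}, the only defining inequalities in which $z_{2m-3}$ appears are the ones coming from these two interior vertices; every other defining inequality is a constraint purely on the remaining coordinates, and so contributes to the description of $\widetilde{P}_{\tree_1\tree_2}$ but not to the bounds on $z_{2m-3}$.

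Now I would read the local incidences off Figure~\ref{fig: tree flip}. For $\tree_1$, the two endpoints of $\varepsilon_{2m-3}$ are the interior vertices where $\{\varepsilon_a,\varepsilon_b,\varepsilon_{2m-3}\}$ and $\{\varepsilon_c,\varepsilon_d,\varepsilon_{2m-3}\}$ meet respectively; the inequalities from Theorem~\ref{thm:inequalities} at these two vertices are
\[
|z_a-z_b|\le z_{2m-3}\le z_a+z_b \qquad\text{and}\qquad |z_c-z_d|\le z_{2m-3}\le z_c+z_d,
\]
which combine to give precisely
\[
\max\{|z_a-z_b|,|z_c-z_d|\}\le z_{2m-3}\le \min\{z_a+z_b,z_c+z_d\}.
\]
This identifies $\widetilde{\varphi}_1$ and $\widetilde{\psi}_1$. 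For $\tree_2$, the local picture (right side of Figure~\ref{fig: tree flip}) pairs $\varepsilon'_{2m-3}$ with $\{\varepsilon'_a,\varepsilon'_d\}$ at one endpoint and with $\{\varepsilon'_b,\varepsilon'_c\}$ at the other; running the same argument gives the stated formulas for $\widetilde{\varphi}_2$ and $\widetilde{\psi}_2$.

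There is no real obstacle here; the argument is essentially a bookkeeping exercise once Theorem~\ref{thm:inequalities} is in hand. The one point that deserves to be checked carefully is the claim that the inequalities not involving $z_{2m-3}$ really do cut out $\widetilde{P}_{\tree_1\tree_2}$, i.e.\ that we are not missing any inequalities that would arise from Fourier--Motzkin elimination. But this is automatic: fixing $v\in\widetilde{P}_{\tree_1\tree_2}$ means that \emph{some} value of $z_{2m-3}$ exists with $(v,z_{2m-3})\in\widetilde{P}_{\tree_i}$, so the interval $[\widetilde{\varphi}_i(v),\widetilde{\psi}_i(v)]$ is nonempty, and as $z_{2m-3}$ ranges over this interval every other defining inequality remains satisfied (since it does not involve $z_{2m-3}$). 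Thus the displayed formulas describe exactly the fiber of the projection over $v$, completing the proof.
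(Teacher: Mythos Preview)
Your proof is correct and follows essentially the same approach as the paper: both apply Theorem~\ref{thm:inequalities} to read off the inequalities at the two interior vertices incident to $\varepsilon_{2m-3}$ (respectively $\varepsilon'_{2m-3}$) and combine them into the stated max/min bounds. Your version is in fact more thorough, since you also verify that the remaining inequalities (those not involving $z_{2m-3}$) cut out $\widetilde{P}_{\tree_1\tree_2}$, a point the paper leaves implicit.
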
 

\begin{proof} 
We prove the formulas for $\widetilde{\varphi}_1$ and $\widetilde{\psi}_1$. The proof for $i=2$ is similar. 
It may be helpful to refer to Figure~\ref{fig: tree flip}. 
From Theorem~\ref{thm:inequalities} we know 
$\lvert z_a - z_b \rvert \leq z_{2m+3}$, and similarly $\lvert z_d - z_c \rvert \leq z_{2m+3}$. 
We conclude $z_{2m+3} \geq \mathrm{max} \{\lvert z_a - z_b \rvert, \lvert z_d - z_c \rvert \}$.  
The other inequality in Theorem~\ref{thm:inequalities} 
 immediately imply $z_{2m+3} \leq \mathrm{min} \{ z_a+z_b, z_d+z_c \}$. This yields the desired formulas. 
\end{proof}

We can now deduce that the lengths of the fibers are equal.

\begin{lemma} 
For all $v \in \widetilde{\Delta}_{\tree_1\tree_2}$
	$$
	\widetilde{\psi}_1(v)-\widetilde{\varphi}_1(v)=\widetilde{\psi}_2(v)-\widetilde{\varphi}_2(v)
	$$
and therefore
	$$
	\text{length of } \left(\mathsf{p}^{-1}(v) \cap \widetilde{\Delta}_{\tree_1}\right)=
	\text{length of } \left(\mathsf{p}^{-1}(v) \cap \widetilde{\Delta}_{\tree_2}\right).
	$$
	\end{lemma}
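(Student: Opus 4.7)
The proof reduces to a pleasant algebraic identity in four variables. By the formulas furnished by the previous lemma, the claim $\widetilde{\psi}_1(v)-\widetilde{\varphi}_1(v) = \widetilde{\psi}_2(v)-\widetilde{\varphi}_2(v)$ amounts to showing that
\[
\min(z_a+z_b, z_c+z_d) - \max(|z_a-z_b|, |z_c-z_d|) = \min(z_a+z_d, z_b+z_c) - \max(|z_a-z_d|, |z_b-z_c|).
\]
My plan is to prove this by showing that \emph{both} sides equal the manifestly $S_4$-symmetric expression $\min(2m,\ \Sigma - 2M)$, where $\Sigma := z_a+z_b+z_c+z_d$, $M := \max\{z_a,z_b,z_c,z_d\}$, and $m := \min\{z_a,z_b,z_c,z_d\}$. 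Since this expression does not depend on any pairing of the four variables, the two sides coincide; and since the LHS and RHS correspond precisely to the two pairings $\{a,b\}\cup\{c,d\}$ and $\{a,d\}\cup\{b,c\}$ of the four edges around $\varepsilon_{2m-3}$ respectively $\varepsilon'_{2m-3}$, this gives exactly what is wanted.

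To establish the general identity $\min(p+q, r+s) - \max(|p-q|, |r-s|) = \min(2m, \Sigma-2M)$ for nonnegative reals $p,q,r,s$, I would first use the symmetries $p \leftrightarrow q$ and $r \leftrightarrow s$ of the LHS to reduce to $p \geq q$ and $r \geq s$, dropping the absolute values. I would then split into the four cases determined by the signs of $(p+q)-(r+s)$ and $(p-q)-(r-s)$. In each case the LHS becomes an explicit linear expression in $p,q,r,s$; the inequalities defining the case (combined with $p \geq q$, $r \geq s$) force particular elements of $\{p,q,r,s\}$ to be the maximum and minimum; and a short computation matches the value with $\min(2m, \Sigma-2M)$. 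For instance, the case $p+q \geq r+s$ and $p-q \geq r-s$ forces, by adding and subtracting the two inequalities, $p \geq r$ and $q \geq s$, so $M = p$; the LHS simplifies to $(r+s) - (p-q) = \Sigma - 2M$, which is at most $2m$ precisely by the defining inequalities of the case. The remaining three cases are handled in exactly the same manner.

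Once this identity is in hand, the fiber-length claim is immediate. By the description of $\widetilde{\Delta}_{\tree_i}$, the fiber $\mathsf{p}^{-1}(v) \cap \widetilde{\Delta}_{\tree_i}$ is the closed interval $\{v\} \times [\widetilde{\varphi}_i(v),\, \widetilde{\psi}_i(v)]$ along the $(2m-3)$-th coordinate axis; since the corresponding standard basis vector has unit Euclidean norm, the Euclidean length of this fiber is $\widetilde{\psi}_i(v) - \widetilde{\varphi}_i(v)$, and the two lengths agree by the previous step. The only real obstacle in this approach is the bookkeeping of the case analysis in the algebraic identity; notably, no appeal to Theorem~\ref{theorem: fiber lengths equal} or to variation of GIT is required, so this gives a self-contained and elementary verification (in particular, showing that the global constant $\kappa$ of Theorem~\ref{theorem:main} equals $1$) for the tropical Grassmannian $\trop(Gr(2,m))$.
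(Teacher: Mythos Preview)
Your proposal is correct and follows essentially the same approach as the paper: both reduce the claim to showing that $\min(p+q,r+s)-\max(|p-q|,|r-s|)$ equals an expression that is symmetric under all permutations of the four variables. The paper records this symmetric expression as the eight-term minimum $\min(2\alpha,2\beta,2\gamma,2\delta,\Sigma-2\alpha,\Sigma-2\beta,\Sigma-2\gamma,\Sigma-2\delta)$, which is exactly your $\min(2m,\Sigma-2M)$ written out; your version is more compact and your case analysis supplies the details the paper omits with ``a computation verifies.''
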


	\begin{proof} 
	A computation verifies that for $\alpha,\beta,\gamma,\delta$ real numbers, we have 
	\begin{align*}
	\mathrm{min}(\alpha+\beta, \gamma+\delta) - \mathrm{max}&( \lvert  \alpha-\beta \rvert, \lvert \gamma-\delta \rvert) \\
	&= 
	\mathrm{min}(2\alpha, 2\beta, 2\gamma, 2\delta, \alpha+\beta+\gamma-\delta, \alpha+\beta-\gamma+\delta, \alpha-\beta+\gamma+\delta, -\alpha+\beta+\gamma+\delta).
	\end{align*}
	Applying the above formula to both $\mathrm{min}(z_a+z_b, z_c +z_d) - \mathrm{max}(\lvert z_a-z_b \rvert, \lvert z_c - z_d \rvert)$ and to $\mathrm{min}(z_a+z_d,  z_b +z_c) - \mathrm{max}(\lvert z_a-z_d \rvert, \lvert z_b- z_c \rvert)$ yields the result. 
	\end{proof}

	In particular, the above shows that, in this case of $Gr(2,m)$, the constant $\kappa$ appearing in Theorem~\ref{theorem: fiber lengths equal} is equal to $1$. 
Following \eqref{eq: shift map}, we can now compute that the shift map in this case is 
	\begin{align*}\widetilde{\mathsf{S}}_{12}:\R^{2m-3}&\rightarrow\R^{2m-3}\\
	(z_1,\ldots,z_{2m-3})&\mapsto(z_1,\ldots,z_{2m-2},z_{2m-3}+\max(|z_a-z_d|,|z_b-z_c|)-\max(|z_a-z_b|,|z_c-z_d|)),
	\end{align*}
and by \eqref{eq: flip map} the flip map is
	\begin{align}\widetilde{\mathsf{F}}_{12}:\R^{2m-3}&\rightarrow\R^{2m-3} \label{eq: tilde flip}\\
	(z_1,\ldots,z_{2m-3})&\mapsto(z_1,\ldots,z_{2m-2},-z_{2m-3}+\min(z_a+z_d,z_b+z_c)+\max(|z_a-z_b|,|z_c-z_d|).\nonumber
	\end{align}
In fact, it is not hard to see that the same formulas extend to give maps on the cones $\widetilde{P}_{\tree_1} \to \widetilde{P}_{\tree_2}$.

We can now describe the flip and shift maps on the original polytopes (respectively cones) $\Delta_{\tree_1}, \Delta_{\tree_2}$ (respectively $P_{\tree_1}, P_{\tree_2}$) by translating via the change of coordinates $\gamma$. Specifically, the flip map $\mathsf{F}_{12}: P_{\tree_1} \to P_{\tree_2}$ and the shift map $\mathsf{S}_{12}: P_{\tree_1} \to P_{\tree_2}$ are given by the formulas
\begin{equation}\label{eq: flip to flip}
\mathsf{F}_{12} := \gamma \circ \widetilde{\mathsf{F}}_{12} \circ \gamma^{-1} \qquad 
\textup{and} \qquad \mathsf{S}_{12} := \gamma \circ \widetilde{\mathsf{S}}_{12} \circ \gamma^{-1}
\end{equation} 
such that the following diagram commutes: 
\begin{center}
\begin{tikzpicture}
\node (p1) at (0,0) {$P_{\tree_1}$};
\node (p2) at (2,0) {$P_{\tree_1}$};
\node (cal1) at (0,-1.5) {$\widetilde{P}_{\tree_1}$}; 
\node (cal2) at (2,-1.5) {$\widetilde{P}_{\tree_2}$}; 
  \draw [dashed,->] (p1) -- (p2);
    \draw[->] (p1)--(cal1);
        \draw[<-] (p2)--(cal2);
        \draw[->] (cal1)--(cal2);
\node at (-.28,-.7) {$\gamma^{-1}$};
\node at (2.15,-.75) {$\gamma$};
\node at (1,.25) {${\sf F}_{12}$};
\node at (1,-1.25) {$\widetilde{{\sf F}}_{12}$};
\end{tikzpicture}
\end{center}
and an analogous diagram commutes for $\mathsf{S}_{12}$.

\begin{remark}
 In \cite[Proposition~3.5]{NoharaUeda} the authors describe a wall-crossing formula for $ \widetilde{\Delta}_{\tree}$.
This map agrees with the shift map $\widetilde{\mathsf{S}}_{12}$.
\end{remark}

\begin{example}\label{ex: ineqs and gemo maps}  Let $m=4$ and $\tree_1$ be the tree of Figure~\ref{fig: trivalent for Gr24}. The corresponding matrix is
	$$
	\widetilde{M}_{\tree_1}
	=
	\begin{pmatrix} 
	1 & 1 & 1 & 0 & 0 & 0 \\
	1 & 0 & 0 & 1 & 1 & 0 \\
	0 & 1 & 0 & 1 & 0 & 1 \\
	0 & 0 & 1 & 0 & 1 & 1 \\
	0 & 1 & 1 & 1 & 1 & 0
	\end{pmatrix}. 
	$$
The inequality description of $\widetilde{\Delta}_{\tree_1}$ is 
	$$
	2=z_1+z_2+z_3+z_4,
	$$
	$$
	z_1\le z_2+z_5,\quad
	z_2\le z_1+z_5,\quad
	z_5\le z_1+z_2,
	$$
	$$
	z_3\le z_4+z_5,\quad
	z_4\le z_3+z_5,\quad
	z_5\le z_3+z_4.
	$$
Using the map $\gamma$ we can now obtain the inequality description of $\Delta_{\tree_1}$. Specifically, 
since $2=z_1+z_2+z_3+z_4$ we obtain $y_1 = \frac{1}{2}(z_1+z_2+z_3+z_4)=1$; we also have $y_i=z_i$ for $i=2,3,4$ and $y_5 = y_1 - z_5 = 1-z_5$. Making appropriate substitutions in the above inequalities we obtain the inequalities for $\Delta_{\tree_1}$: 
	$$
	1=y_1,
	$$
	$$
	y_5+1\le 2y_2+y_3+y_4,\quad 2y_2+y_3+y_4+y_5\le 3,\quad y_3+y_4\le 1+y_5
	$$
	$$
	y_3+y_5\le1+y_4,\quad y_4+y_5\le 1+y_3,\quad 1\le y_3+y_4+y_5.
	$$

 Now let $\tree_2$ be the other tree of Figure~\ref{fig: trivalent for Gr24}. 
The shift map is
	\begin{align*}\widetilde{\mathsf{S}}_{12}:\widetilde{\Delta}_{\tree_1}&\to\widetilde{\Delta}_{\tree_2}\\
	(z_1,\ldots,z_{5})&\mapsto(z_1,\ldots,z_{4},z_{5}+\max(|z_1-z_4|,|z_2-z_3|)-\max(|z_1-z_2|,|z_3-z_4|)),
	\end{align*}
and the flip map is
	\begin{align*}\widetilde{\mathsf{F}}_{12}:\widetilde{\Delta}_{\tree_1}&\to\widetilde{\Delta}_{\tree_2}\\
	(z_1,\ldots,z_{5})&\mapsto(z_1,\ldots,z_{4},-z_{5}+\min(z_1+z_4,z_2+z_3)+\max(|z_1-z_2|,|z_3-z_4|).
	\end{align*}
Similarly, one can give an explicit description for the shift and flip maps for $\Delta_{\tree_1}\to\Delta_{\tree_2}$. 
\end{example}

\begin{remark} 
Our flip maps are related to cluster mutations in the case of $Gr(2,m)$. Recall that the homogeneous coordinate ring of the Grassmannian is a cluster algebra with the Pl\"ucker coordinates as its cluster variables \cite{FominZelevinsky} and this cluster structure gives rise to an atlas of complex tori on (an open dense subset inside) $Gr(2,m)$. The transition maps between adjacent tori are called \textit{(cluster) mutations}. In this case 
the tropicalized mutation coincides with our ``flip'' wall-crossing. 
Let us exhibit this in the example above. 
Starting with the seed $\{p_{12},p_{23},p_{34},p_{14},p_{13}\}$ and mutating at $p_{13}$ replaces this coordinate with
	$$
	\frac{p_{14}p_{23}+p_{12}p_{34}}{p_{13}}
	=
	p_{24}
	$$
yielding the seed $\{p_{12},p_{23},p_{34},p_{14},p_{24}\}$.
The tropicalization of this Laurent polynomial with respect to the maximum convention is $-p_{13}+\max(p_{14}+p_{23},p_{12}+p_{34})$.
By identifying the variables as follows
	$$
	z_1=p_{12},
	\qquad
	z_2=p_{23},
	\qquad
	z_3=p_{34},
	\qquad
	z_4=p_{14},
	\qquad
	z_5=p_{13}
	$$
	and using the identity 
	$$
	\min(z_a+z_d,z_b+z_c)
	+
	\max(|z_a-z_b|,|z_c-z_d|)
	=
	\max(z_a+z_b,z_c+z_d)
	$$
	we obtain $\widetilde{\mathsf{F}}_{12}$.
In \cite{Rietsch-Williams} Rietsch and Williams obtain piecewise linear maps for Newton-Okounkov bodies for $Gr(2,m)$, and more generally $Gr(k,m)$, by tropicalizing cluster mutation. See also \cite{BossHarMoha2019,BossMohaNaje} for related discussion. 
\end{remark}

\subsection{The algebraic wall-crossing map for $Gr(2,m)$}

In this section, we give a description of the algebraic wall-crossing map described in Section~\ref{subsec: algebraic crossing} for the case of $Gr(2,m)$, or more precisely  $\mathcal{T}(I_{2,m})$.  In the next section we will prove that the algebraic wall-crossing obtained below is also the restriction (to the semigroup) of the geometric ``flip'' map. 
For the definition of the algebraic wall-crossing, as explained in Section~\ref{subsec: algebraic crossing} we restrict to the case when the two adjacent maximal-dimensional prime cones of $\mathcal{T}(I_{2,m})$ lie in a certain maximal cone of the Gr\"obner fan. 
This may appear to be a restrictive condition. While not strictly logically necessary, we illustrate in the first few lemmas below that, up to the symmetry of $\mathfrak{S}_m$, this is true for any pair of adjacent maximal-dimensional prime cones of $\mathcal{T}(I_{2,m})$.

Recall that the semigroups
	$$S_{\tree_1}:=S(A, \nu_{M_{\tree_1}})
	\qquad \text{ and }\qquad 
	S_{\tree_2}:=S(A, \nu_{M_{\tree_2}})
	$$
are generated by the columns of $M_{\tree_1}$ and $M_{\tree_1}$, respectively.
As explained in Section~\ref{subsec: algebraic crossing}, we will use an adapted basis to construct the algebraic wall-crossing map $\Theta: S_{\tree_1}\to S_{\tree_2}$. To describe $\Theta$ more concretely we need some preliminaries. 
By \cite[\S 3.7]{SturmfelsInvariant} we know 
there exists a total order $\prec$ on $\C[p_{I}\mid I\subset [m], |I|=k]$ such that for any quadruple $\{i,j, k, \ell\}$ of indices in $[m]$ with $i<j<k<l$ we have that 
	\begin{equation}\label{eq:trop2nmonorder}
	\init_\prec(p_{ij}p_{kl}-p_{ik}p_{jl}+p_{il}p_{jk})=-p_{ik}p_{jl}.
	\end{equation}

The next lemma is essentially \cite[Second proof of $\supset$ in Theorem~4.3.5]{MaclaganSturmfels} and will be useful in what follows, so we briefly recall the idea of the argument. 
Note that the symmetric group $\mathfrak{S}_m$ naturally acts on the variables $p_{ij}$ by permuting the indices. 

\begin{lemma}
Up to this $\mathfrak{S}_m$ symmetry, all the maximal cones in $\mathcal{T}(I_{2,m})$ are contained in the maximal cone of the Gr\"obner fan of $I_{2,m}$ corresponding to the monomial order $\prec$ above. 
\end{lemma}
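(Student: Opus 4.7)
The plan is to characterize precisely which maximal cones $C_\tree$ are contained in the Gr\"obner cone $C_\prec$, and then exhibit, for any trivalent tree, a relabeling of its leaves producing such a cone. Recall that $C_\tree \subseteq C_\prec$ if and only if $\init_\prec(\init_{C_\tree}(I_{2,m})) = \init_\prec(I_{2,m})$. By \eqref{eq:trop2nmonorder} the monomial $p_{ik}p_{jl}$ is the $\prec$-largest of the three monomials of the $\{i,j,k,l\}$-Pl\"ucker relation for every $i<j<k<l$, so $\init_\prec(I_{2,m})$ is (at least) the monomial ideal generated by all such $p_{ik}p_{jl}$, and in fact equals it because the three-term Pl\"ucker relations form a Gr\"obner basis for $I_{2,m}$ under $\prec$.

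The next step is to compute $\init_{C_\tree}(I_{2,m})$ quartet by quartet. Since the three-term Pl\"ucker relations form a tropical basis of $I_{2,m}$, for any $w$ in the relative interior of $C_\tree$ and any quadruple $1\le i<j<k<l\le m$, the initial form $\init_w(p_{ij}p_{kl}-p_{ik}p_{jl}+p_{il}p_{jk})$ is the binomial obtained by the four-point condition applied to the tree distance $-w$ corresponding to $\tree$. Explicitly, if the restriction of $\tree$ to $\{i,j,k,l\}$ has quartet topology $ij|kl$ (respectively $ik|jl$, $il|jk$), the initial form equals $-p_{ik}p_{jl}+p_{il}p_{jk}$ (respectively $p_{ij}p_{kl}+p_{il}p_{jk}$, $p_{ij}p_{kl}-p_{ik}p_{jl}$). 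In the first and third cases, the monomial $p_{ik}p_{jl}$ is present, and since it is $\prec$-largest it becomes the $\init_\prec$ of the binomial, matching the corresponding generator of $\init_\prec(I_{2,m})$. In the middle case $ik|jl$, the monomial $p_{ik}p_{jl}$ is absent, and $\init_\prec$ produces a different monomial instead.

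Putting this together, $C_\tree \subseteq C_\prec$ if and only if the quartet topology of $\tree$ on every $\{i,j,k,l\}$ with $i<j<k<l$ is never the ``crossing'' topology $ik|jl$. The final step is to observe that this non-crossing condition is equivalent to requiring that the labels $1, 2, \ldots, m$ appear in this cyclic order around some planar embedding of $\tree$: in a planar embedding with cyclic boundary order $1, 2, \ldots, m$, the path from $i$ to $k$ and the path from $j$ to $l$ would necessarily share at least one edge, ruling out the topology $ik|jl$. Conversely, a tree violating the non-crossing condition cannot be embedded with leaves in cyclic order $1, \ldots, m$.

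Since every trivalent tree admits a planar embedding, pick one and read off the resulting cyclic order $\pi(1), \pi(2), \ldots, \pi(m)$ of the leaves. Applying the relabeling $\pi^{-1} \in \mathfrak{S}_m$ produces a new trivalent tree $\tree'$ on $[m]$ whose leaves appear in cyclic order $1, 2, \ldots, m$; by the discussion above $C_{\tree'} \subseteq C_\prec$. The mildly delicate part of the argument is the quartet-by-quartet computation of $\init_{C_\tree}(I_{2,m})$ together with the translation between quartet topologies and planar non-crossing conditions; the remaining planarity input is a standard fact of phylogenetics.
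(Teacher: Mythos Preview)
Your proof is correct and follows essentially the same route as the paper's: fix a planar embedding of the trivalent tree, relabel so the leaves appear in cyclic order $1,\ldots,m$, and then verify that for each quadruple $i<j<k<l$ the initial form of the corresponding Pl\"ucker relation retains the monomial $p_{ik}p_{jl}$, whence $\prec$ refines the weight order and $C_\tree\subseteq C_\prec$. The only stylistic difference is that you phrase the verification in terms of quartet topologies and the four-point condition, whereas the paper carries it out directly via the edge partitions $J_\varepsilon$; these are two ways of saying the same thing. Your additional biconditional characterization (non-crossing quartets $\Longleftrightarrow$ $C_\tree\subseteq C_\prec$) is stronger than what the lemma requires, and the ``only if'' direction would need a bit more justification, but the direction actually used is the same as the paper's.
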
 

\begin{proof} 
Let $\tree$ be a trivalent tree with $m$ leaves.
Fix a planar embedding of the graph where the $m$ leaves are arranged in a circle. 
We can act by $\mathfrak{S}_m$ to relabel the leaves so that they appear $1,2,\ldots, m$, in order, counterclockwise.  
We claim that, in this situation, the cone $C_\tree$ lies in the maximal cone of the Gr\"obner fan corresponding to the monomial order $\prec$ above. To see this, it suffices to check that for any choice of four leaves $1 \leq i < j < k < \ell \leq m$ of $\tree$, the initial term of the corresponding Pl\"ucker relation $\init_{C_{\tree}}(p_{ij}p_{k\ell}-p_{ik}p_{j\ell}+p_{i\ell}p_{jk})$ contains the monomial $-p_{ik}p_{j \ell}$, since this implies that $\prec$ refines the weight order corresponding to $C_\tree$. Recall that the cone $C_\tree$ is spanned by the tree distances of the form $\mathbb{1} - d_{\varepsilon}$ for interior edges $\varepsilon$, and also the lineality space. By definition, the lineality space does not affect the Pl\"ucker relations so it suffices to consider the interior edges. Let $\varepsilon$ be an interior edge. Due to the counterclockwise ordering of the vertices, it is not hard to see that if $\varepsilon$ has the property that $\lvert \{ i, j, k, \ell\} \rvert \cap J_\varepsilon \rvert = 2$, then we have
\begin{equation}\label{eq: Jepsilon} 
\{i, j, k, \ell \} \cap J_\varepsilon = \{a,b\} \quad \textup{ and } \quad \{i,j,k,\ell\} \cap J_\varepsilon^c = \{c,d\}
\end{equation}
where either $\{a,b\} = \{i,j\}$ or $\{a,b\} = \{i,\ell\}$. It can then be checked that
the initial term $\init_{\mathbb{1}-d_\varepsilon}(p_{ij}p_{k\ell}-p_{ik}p_{j\ell}+p_{i\ell}p_{jk})$ 
contains the monomial $-p_{ik}p_{j \ell}$ in either case.  For any other internal edge $\varepsilon'$, since $\tree$ is a tree we can see that either $\lvert \{ i, j, k, \ell\} \rvert \cap J_{\varepsilon'} \rvert \neq 2$ or, 
the decomposition $\{i,j,k,\ell\} = (\{i,j,k,\ell\} \cap J_\varepsilon) \sqcup (\{i,j,k,\ell\} \cap J_\varepsilon^c)$ is the same as that for $\varepsilon$ in~\eqref{eq: Jepsilon}. In the former case, 
\[
\init_{\mathbb{1} - d_{\varepsilon'}}(p_{ij}p_{k\ell}-p_{ik}p_{j\ell}+p_{i\ell}p_{jk}) = p_{ij}p_{k\ell}-p_{ik}p_{j\ell}+p_{i\ell}p_{jk}
\]
and in the latter case, the initial term is the same as that for $\varepsilon$.
It follows that $\init_{C_\tree}(p_{ij}p_{k\ell}-p_{ik}p_{j\ell}+p_{i\ell}p_{jk}) =  - p_{ac}p_{bd} + p_{ad}p_{bc}$, as desired. 
\end{proof}

	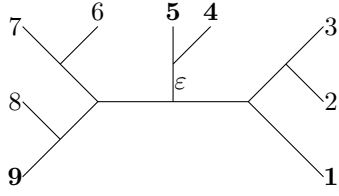
\begin{figure}[h] 
	\begin{tikzpicture}
	\draw (1,1)--(1,0);
	\draw (1,.5)--(1.5,1);
	\draw (0,0)--(2,0);
	\draw (3,1)--(2,0)--(3,-1);
	\draw (2.5,.5)--(3,0);
	\draw (-1,1)--(0,0)--(-1,-1);
	\draw (-.5,.5)--(0,1);
	\draw (-1,0)--(-.5,-.5);
	\node at (-1.1,-1) {$\boldsymbol{9}$};
	\node at (-1.1,0) {$8$};	
	\node at (-1.1,1) {$7$};
	\node at (0,1.2) {$6$};
	\node at (1,1.2) {$\boldsymbol{5}$};	
	\node at (1.5,1.2) {$\boldsymbol{4}$};	
	\node at (3.1,1) {$3$};
	\node at (3.1,0) {$2$};
	\node at (1.1,.25) {$\varepsilon$};	
	\node at (3.1,-1) {$\boldsymbol{1}$};
	\end{tikzpicture}
	\caption{The initial form of the Pl\"ucker relation for $1,4,5,9$ with respect to the tree above is $p_{14}p_{59}-p_{15}p_{49}$.}
	\label{fig: initial form of tree} 
	\end{figure}

To describe the algebraic wall-crossing map, we first need to act by $\mathfrak{S}_n$ to simultaneously take two adjacent maximal-dimensional prime cones in $\mathcal{T}(I_{2,n})$ to the same maximal cone in the Gr\"obner fan. This is the content of the next lemma. 

\begin{lemma} 
Let $\tree_1$ and $\tree_2$ correspond to two adjacent maximal cones in $\mathcal{T}(I_{2,n})$. Then there exists an element of $\mathfrak{S}_n$ which takes both $C_{\tree_1}$ and $C_{\tree_2}$ to the maximal cone in the Gr\"obner fan corresponding to $\prec$. 
\end{lemma}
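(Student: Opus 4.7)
The plan is to reduce this to the previous lemma by showing that any pair of adjacent maximal-dimensional prime cones in $\mathcal{T}(I_{2,m})$ can be simultaneously realized in a single planar embedding whose counterclockwise leaf order can be normalized by an element of $\mathfrak{S}_m$. The starting point is the combinatorial description of adjacency: the two trees $\tree_1$ and $\tree_2$ differ only by a local modification at one interior edge, depicted in Figure~\ref{fig: tree flip}. In particular, they share the four subtrees with leaf sets $I, J, K, L \subseteq [m]$ satisfying $I \sqcup J \sqcup K \sqcup L = [m]$, and outside a small neighborhood of the distinguished edges $\varepsilon_{2m-3}$ and $\varepsilon'_{2m-3}$ the two trees are identical.

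The key observation is that the flip operation preserves the cyclic order of the four subtrees $I, J, K, L$ in a suitable planar embedding. Concretely, I would first fix a planar embedding of $\tree_1$ as on the left of Figure~\ref{fig: tree flip}, so that traversing the outer boundary counterclockwise visits the subtrees in the cyclic order $I, J, K, L$ (and within each subtree, the leaves are visited in some inherited cyclic order). Now the right side of Figure~\ref{fig: tree flip} shows that there is a planar embedding of $\tree_2$ in which the subtrees appear in the same cyclic order, with the same internal embeddings. Hence we may assume, up to choice of embedding, that $\tree_1$ and $\tree_2$ induce the same cyclic order on $[m]$.

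Next, I would let $\sigma \in \mathfrak{S}_m$ be the permutation that relabels this common cyclic order so that the leaves appear counterclockwise as $1, 2, \ldots, m$. Applying $\sigma$ simultaneously to the leaf labels of both $\tree_1$ and $\tree_2$, the previous lemma applies to each of the resulting trees individually, showing that both $C_{\sigma \cdot \tree_1}$ and $C_{\sigma \cdot \tree_2}$ are contained in the maximal Gr\"obner cone corresponding to $\prec$.

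The main obstacle, and the one nontrivial point, is the justification that the flip preserves the cyclic order of the four incident subtrees: this amounts to observing that the two configurations in Figure~\ref{fig: tree flip} are precisely the two trivalent resolutions compatible with a fixed cyclic ordering of the four edges entering a tetravalent vertex; the third resolution would give the cone $C_3$ mentioned (but not used) in Example~\ref{example: algebraic is not geometric} and is not adjacent to $C_{\tree_1}$ through a codimension-one face. Once this is granted, the argument is routine, since the previous lemma then does all the remaining work.
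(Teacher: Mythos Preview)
Your proposal is correct and follows essentially the same approach as the paper's proof: both arguments use the $I \sqcup J \sqcup K \sqcup L$ decomposition from Figure~\ref{fig: tree flip} to observe that $\tree_1$ and $\tree_2$ admit planar embeddings with the leaves in the same cyclic order, so a single permutation places both in the Gr\"obner cone for $\prec$ via the previous lemma. Your write-up is more detailed than the paper's, but the content is the same; one small correction: the aside about a ``cone $C_3$'' from Example~\ref{example: algebraic is not geometric} is misplaced, since that example concerns a hypersurface unrelated to $\mathcal{T}(I_{2,m})$.
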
 

\begin{proof} 
Let $\varepsilon_{2m-3} \in\tree_1$ and $\varepsilon'_{2m-3} \in\tree_2$ be the edges by which the two trees differ, as in Figure~\ref{fig: tree flip}. 
There exist $I,J,K,L\subset [m]$ such that $\varepsilon_1$ corresponds to the partition $I\cup J, K\cup L$ and $\varepsilon_2$ corresponds to the partition $I\cup L, J\cup K$.
There are planar realizations of $\tree_1$ and $\tree_2$ with the leaves arranged in a circle so that $I,J,K,L$ are arranged in counterclockwise order.
\end{proof} 

The above discussion shows that the assumption that two adjacent maximal-dimensional prime cones are both contained in the same maximal cone of a Gr\"obner fan is not very restrictive. 
With this in mind, we now describe the wall-crossing $\Theta$ for value semigroups, under the assumption that both $C_{\tau_1}$ and $C_{\tau_2}$ lie in the maximal Gr\"obner cone corresponding to the above monomial order $\prec$. 
By Theorem~\ref{theorem:kavehmanon theorem 4} (2), we may take 
the projection onto $\C[p_J : J \subset [m], \lvert J \rvert =2]/I_{2,m}$ of the standard monomial basis for $I_{2,m}$ with respect to $\prec$ as an adapted basis $\mathbb{B}$ for $(A,\nu_{M_{\tau_k}})$. 
Since the Pl\"ucker relations are a Gr\"obner basis for $\prec$ \cite[Second proof of $\supseteq$ of Theorem 4.3.5]{MaclaganSturmfels} we have 
	$$\init_\prec(I_{2,m})=\langle p_{ik}p_{jl} \mid 1\le i<j<k<l\le m \rangle$$
then $\mathbb{B}$ is the image of $\mathcal{S}(\prec,I_{2,m})$ under the projection $\pi:\C[p_{I}]\to\C[p_{I}]/I_{2,m}$, where
\begin{equation}\label{eq: standard condition} 
	\mathcal{S}(\prec,I_{2,m})=\left\{\prod_{i<j}p_{ij}^{\alpha_{ij}} \mid \alpha_{ik}\alpha_{jl}=0\quad \text{for}\quad 1\le i<j<k<l\le m\right\}.
	\end{equation} 
As before, the maps 
	\begin{align*}
	\theta_k:\mathbb{B} & \to S(A, \nu_{M_{\tree_k}})\\
	b & \mapsto  \nu_{M_{\tree_k}}(b)
	\end{align*}
are bijections for $k=1,2$. 
We then obtain the bijection $\Theta:S_{\tree_1}\to S_{\tree_2}$ defined by $\Theta=\theta_2\circ\theta_1^{-1}$ as in Section~\ref{subsec: algebraic crossing}. We work out a concrete example below.

\begin{example}\label{ex: 2,4 alg map} 
Let $m=4$ and consider $\mathcal{T}(I_{2,4})$. Let $\tau_1$ and $\tau_2$ be as in Example~\ref{ex: ineqs and gemo maps} and let 
$e_{12},\ldots,e_{34}$ (respectively $f_{12}, \ldots, f_{34}$) denote the columns of $M_{\tree_1}$ (respectively $M_{\tree_2}$), labelled by the same Pl\"ucker indices as the columns themselves. We claim that the algebraic map is given by the concrete formula 
	\begin{equation}\label{eq: 2,4 alg map}
	\Theta(M_{\tree_1}\alpha)
	=
	M_{\tau_2}\begin{pmatrix}\alpha_{12}\\ \max(0,\alpha_{13}-\alpha_{24}) \\ \alpha_{14}+\min(\alpha_{13},\alpha_{24})\\ \alpha_{23}+\min(\alpha_{13},\alpha_{24}) \\ \max(0,\alpha_{24}-\alpha_{13})\\ \alpha_{34}\end{pmatrix}.
	\end{equation}
To see this, first let $M_{\tree_1}\alpha\in S(A, \nu_{M_{\tree_1}})$ for an arbitrary $\alpha \in \Z^{\binom{4}{2}}_{\geq 0}$. We check~\eqref{eq: 2,4 alg map} by cases.  Note that the only set of indices satisfying the condition $1 \leq i < j < k < \ell \leq 4$ is $i=1, j=2, k=3, \ell=4$. Hence if $\alpha_{13}\alpha_{24}=0$, then by~\eqref{eq: standard condition} it follows that $p^\alpha\in \mathcal{S}(\prec,I_{2,m})$ and therefore $\Theta(M_{\tree_1}\alpha)=M_{\tree_2}\alpha$, which agrees with \eqref{eq: 2,4 alg map}. On the other hand, 
if $\alpha_{13}\alpha_{24}\neq 0$, then from the definition of $\Theta$ we must first find $\beta \in \Z^{\binom{4}{2}}_{\geq 0}$ such that $p^{\beta} \in \mathcal{S}(\prec,I_{2,m})$ and such that $\nu_{M_{\tree_1}}(\pi(p^\alpha))=\nu_{M_{\tree_1}}(\pi(p^\beta))$. 
This would imply that $\Theta(M_{\tree_1}\alpha)=M_{\tree_2}\beta$. 
To achieve this, we can use the relation $p_{13}p_{24}=p_{14}p_{23}$ in $\C[p_I]/\init_{C_{\tree_1}}(I_{2,4})$ to see that
	$$
	p_{13}^{\alpha_{13}}p_{24}^{\alpha_{24}}
	=
	\begin{cases}
	p_{14}^{\alpha_{13}}p_{23}^{\alpha_{13}}p_{24}^{\alpha_{24}-\alpha_{13}}\quad \text{ if }\quad \alpha_{24}\ge\alpha_{13}\\
	p_{13}^{\alpha_{13}-\alpha_{24}}p_{14}^{\alpha_{13}}p_{23}^{\alpha_{13}}\quad \text{ if }\quad \alpha_{13}\ge\alpha_{24}
	\end{cases}
	$$
in $\C[p_I]/\init_{C_{\tree_1}}(I_{2,4})$. The vector $\beta$ can be found by using the above substitution in $p^\alpha$. Then~\eqref{eq: 2,4 alg map} follows from combining the cases.
\end{example}

In general, the algebraic wall-crossing map is difficult to describe explicitly.

\subsection{The geometric wall-crossing map $\mathsf{F}_{12}$ induces the algebraic wall-crossing map for $Gr(2,m)$}\label{subsec: algebraic is flip for Gr2m}

The main result of this section is the following.

\begin{theorem}\label{theorem: main Gr2m}
\textit{ Let $I_{2,m}$ be the Pl\"ucker ideal for $Gr(2,m)$ and let $C_{\tree_1}$ and $C_{\tree_2}$ be two maximal-dimensional prime cones in $\mathcal{T}(I_{2,m})$ which share a codimension-$1$ face. Assume that $C_{\tree_1}$ and $C_{\tree_2}$ are contained in the maximal cone of the Gr\"obner fan corresponding to $\prec$. 
Extend the flip geometric wall-crossing map $\mathsf{F}_{12}$ of~\eqref{eq: flip map} to a map $\mathsf{F}_{12}:P_{\tree_1}\rightarrow P_{\tree_2}$ where $P_{\tree_i}$ is the cone spanned by the columns of $M_{\tree_i}$.
Then the algebraic wall-crossing map $\Theta:S_{\tree_1}\rightarrow S_{\tree_2}$ is the restriction of the flip geometric wall-crossing map $\mathsf{F}_{12}$.}
\end{theorem}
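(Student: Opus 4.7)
The plan is to reduce Theorem~\ref{theorem: main Gr2m} to checking a single coordinate equality on standard monomials, and then verify it by an explicit combinatorial count. By Lemma~\ref{lemma: formula for Theta}, every element of $S_{\tree_1}$ has the form $M_{\tree_1}\alpha$ for some standard monomial $p^\alpha \in \mathcal{S}(\prec, I_{2,m})$ and $\Theta$ sends it to $M_{\tree_2}\alpha$. Conjugating by the linear isomorphism $\gamma$ of Lemma~\ref{lemma: gamma}, it therefore suffices to prove
\[
\widetilde{\mathsf{F}}_{12}(\widetilde{M}_{\tree_1}\alpha) = \widetilde{M}_{\tree_2}\alpha
\]
for every standard $\alpha$. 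Both sides agree in their first $2m-4$ coordinates---for the left side by the explicit formula~\eqref{eq: tilde flip}, and for the right side because $\widetilde{M}_{\tree_1}$ and $\widetilde{M}_{\tree_2}$ agree in their first $2m-4$ rows (equivalently, by Lemma~\ref{lemma: algebraic covers identity})---so only the $(2m-3)$rd coordinate remains to be checked.

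Next I would introduce block-sum coordinates adapted to the flip. Let $[m] = I \sqcup J \sqcup K \sqcup L$ be the partition of leaves corresponding to the four subtrees incident to the two vertices of the flipping edge, as in Figure~\ref{fig: tree flip}. By the Gr\"obner hypothesis and the $\mathfrak{S}_m$-reduction in Section~\ref{subsec: algebraic crossing}, we may assume $I, J, K, L$ appear in counterclockwise cyclic order and are each contiguous. For distinct $X, Y \in \{I, J, K, L\}$, write $\alpha_{XY} := \sum_{x \in X, y \in Y} \alpha_{xy}$. Counting via~\eqref{eq: cijk def} which pairs $\{i, j\}$ have their unique leaf-path passing through each specified edge, one obtains explicit formulas for the coordinates $z_a, z_b, z_c, z_d$ (attached to the four edges into $I, J, K, L$) and for $(\widetilde{M}_{\tree_1}\alpha)_{2m-3}$ and $(\widetilde{M}_{\tree_2}\alpha)_{2m-3}$ as nonnegative integer combinations of the $\alpha_{XY}$.

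The key algebraic input is that the standard-monomial condition $\alpha_{ik}\alpha_{j\ell} = 0$, applied to any quadruple $i \in I$, $j \in J$, $k \in K$, $\ell \in L$ (which satisfies $i<j<k<\ell$ by the cyclic labeling), forces the product $\alpha_{IK} \cdot \alpha_{JL}$ of block sums to vanish: if some single $\alpha_{i_0 k_0} > 0$ with $i_0 \in I, k_0 \in K$, then every $\alpha_{j \ell}$ with $j \in J, \ell \in L$ must vanish, so $\alpha_{JL} = 0$. After this reduction, the required identity becomes
\[
(\widetilde{M}_{\tree_1}\alpha)_{2m-3} + (\widetilde{M}_{\tree_2}\alpha)_{2m-3} = \min(z_a + z_d,\, z_b + z_c) + \max(|z_a - z_b|,\, |z_c - z_d|),
\]
and a short case analysis on which of $\alpha_{IK}, \alpha_{JL}$ is zero and on the sign of $\alpha_{IL} - \alpha_{JK}$ shows that both sides collapse to $\alpha_{IJ} + 2\alpha_{IK} + \alpha_{IL} + \alpha_{JK} + 2\alpha_{JL} + \alpha_{KL}$.

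The main obstacle is bookkeeping rather than any deep new idea: one must carefully identify the four edges $\varepsilon_a, \varepsilon_b, \varepsilon_c, \varepsilon_d$ near the flipping edge in both trees, verify that the edge-traversal count~\eqref{eq: cijk def} yields the claimed block-sum decompositions in both trees simultaneously, and check that the $\mathfrak{S}_m$-relabeling used to align $C_{\tree_1}$ and $C_{\tree_2}$ with the Gr\"obner cone of $\prec$ is compatible with the construction of $\Theta$ and $\mathsf{F}_{12}$ (which it is, since both are defined intrinsically from the matrices $M_{\tree_i}$). Once this is in place, the residual computation is elementary.
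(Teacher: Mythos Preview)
Your proposal is correct and follows essentially the same route as the paper: conjugate by $\gamma$ to reduce to the $\widetilde{M}_{\tree_i}$ setting, note that the first $2m-4$ coordinates agree trivially, express the last coordinate via the block sums $\alpha_{XY}$, and use that the standard-monomial condition forces $\alpha_{IK}\cdot\alpha_{JL}=0$. The paper organizes the final step slightly differently---it first proves the closed formula $(\widetilde{\mathsf{F}}_{12}(\widetilde{M}_{\tree_1}\alpha))_{2m-3}=\alpha_{IJ}+|\alpha_{IK}-\alpha_{JL}|+\alpha_{KL}$ (valid for \emph{all} $\alpha$, using the identity $2\min(x,y)+|x-y|=x+y$ to bypass any case split on $\alpha_{IL}-\alpha_{JK}$) and only then compares with $(\widetilde{M}_{\tree_2}\alpha)_{2m-3}$---whereas you recast the goal as the symmetric sum identity; but this is a cosmetic rearrangement of the same computation. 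One small remark: since the theorem already \emph{assumes} $C_{\tree_1},C_{\tree_2}$ lie in the Gr\"obner cone for $\prec$, you need not invoke any $\mathfrak{S}_m$-relabeling---the cyclic ordering of $I,J,K,L$ (and hence $i<j<k<\ell$ for $i\in I,\,j\in J,\,k\in K,\,\ell\in L$) is already forced by that hypothesis together with the tree combinatorics.
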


Our method of proof is to first show the analogous result for the matrices $\widetilde{M}_{\tree_i}$ introduced in Section~\ref{subsec: geometric Gr2m}. Since the $\widetilde{M}_{\tree_i}$ and $M_{\tree_i}$ are related by the change of coordinates $\gamma$ of~\eqref{eq: gamma}, this gives us the desired result.

First, we define a map 
	\begin{equation}\label{eq: def widetilde Theta}
	\widetilde{\Theta}:= \gamma^{-1} \circ \Theta \circ \gamma. 
	\end{equation}
	By Lemma~\ref{lemma: gamma} we know that $\gamma$ induces a bijection from $\widetilde{S}_{\tree_i}$ to $S_{\tree_i}$, so it follows immediately that $\widetilde{\Theta}$ restricts to a map 
	\[
	\widetilde{\Theta}: \widetilde{S}_{\tau_1} \to \widetilde{S}_{\tau_2}.
	\]
Since $\gamma$ takes columns of $\widetilde{M}_{\tree_i}$ to the corresponding columns of $M_{\tree_i}$ for $i=1,2$, it follows easily that the map $\widetilde{\Theta}$ of~\eqref{eq: def widetilde Theta} behaves as follows: 
\[
\widetilde{\Theta}(\widetilde{M}_{\tree_1}\alpha) = \widetilde{M}_{\tree_2}\alpha
\]
for $\alpha$ such that $p^{\alpha} \in \mathcal{S}(\prec, I_{2,m})$. 	

The following proposition is the analogue of Theorem~\ref{theorem: main Gr2m} for the map $\widetilde{\Theta}$. 
Let $\widetilde{\sf F}_{12}$ be the flip map defined in Section~\ref{subsec: geometric crossing Gr2m}. 

\begin{proposition}\label{lem: cal gr2n geom equals alg} 
The restriction of $\widetilde{{\sf F}}_{12}$ to the semigroup $\widetilde{S}_1$ is equal to $\widetilde{\Theta}: \tilde{S}_1 \to \tilde{S}_2$, i.e., for any $\alpha$ such that $p^{\alpha} \in \mathcal{S}(\prec,I_{2,m})$ we have 
	$$\widetilde{{\sf F}}_{12}(\widetilde{M}_{\tree_1}\alpha)
	=\widetilde{\Theta}(\widetilde{M}_{\tree_1}\alpha).
	$$
\end{proposition}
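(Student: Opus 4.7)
The plan is to reduce the claim to a purely combinatorial identity. Since $\widetilde{\Theta}$ sends $\widetilde{M}_{\tree_1}\alpha \mapsto \widetilde{M}_{\tree_2}\alpha$ whenever $p^{\alpha} \in \mathcal{S}(\prec, I_{2,m})$, and since the flip map $\widetilde{{\sf F}}_{12}$ acts as the identity on the first $2m-4$ coordinates while $\widetilde{M}_{\tree_1}$ and $\widetilde{M}_{\tree_2}$ share all but the last row, only equality of the bottom coordinate needs to be checked. Referring to Figure~\ref{fig: tree flip}, let $I, J, K, L \subseteq [m]$ be the four leaf-sets cut off by removing $\varepsilon_{2m-3}$ and then the four adjacent edges $\varepsilon_a, \varepsilon_b, \varepsilon_c, \varepsilon_d$ from $\tree_1$.

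I would then rewrite the relevant entries uniformly. Setting $e_{XY} := \sum_{i \in X, j \in Y} \alpha_{ij}$ for disjoint $X, Y \in \{I,J,K,L\}$, the tree-path description \eqref{eq: cijk def} yields
\begin{align*}
z_a &= e_{IJ} + e_{IK} + e_{IL}, & z_b &= e_{IJ} + e_{JK} + e_{JL}, \\
z_c &= e_{IK} + e_{JK} + e_{KL}, & z_d &= e_{IL} + e_{JL} + e_{KL}, \\
z_{2m-3} &= e_{IK} + e_{IL} + e_{JK} + e_{JL}, & z'_{2m-3} &= e_{IJ} + e_{IK} + e_{JL} + e_{KL},
\end{align*}
where $z'_{2m-3}$ denotes the bottom entry of $\widetilde{M}_{\tree_2}\alpha$. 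By \eqref{eq: tilde flip}, the identity to verify becomes
\[
\min(z_a + z_d,\, z_b + z_c) + \max(|z_a - z_b|,\, |z_c - z_d|) = z_{2m-3} + z'_{2m-3}.
\]

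The crucial input comes from combining the standard-monomial hypothesis with the planar-embedding description of the Gr\"obner cone of $\prec$: since $\tree_1$ and $\tree_2$ both lie in this cone, after acting by $\mathfrak{S}_m$ if necessary the arcs $I, J, K, L$ appear in counterclockwise order, so any quadruple $i \in I, j \in J, k \in K, l \in L$ satisfies $i<j<k<l$. The standard-monomial condition $\alpha_{ik}\alpha_{jl} = 0$ for all $1 \le i<j<k<l \le m$ then forces $e_{IK} \cdot e_{JL} = 0$. Without loss of generality assume $e_{JL} = 0$ (the other case is handled by the manifest symmetry $(I,K) \leftrightarrow (J,L)$). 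Using the elementary identities $(z_a - z_b) + (z_c - z_d) = 2(e_{IK} - e_{JL})$ and $(z_a - z_b) - (z_c - z_d) = 2(e_{IL} - e_{JK})$, together with $\max(|x|,|y|) = \tfrac{1}{2}(|x+y|+|x-y|)$ and $2\min(s,t)+|s-t| = s+t$, both sides of the identity collapse to $e_{IJ} + 2e_{IK} + e_{IL} + e_{JK} + e_{KL}$.

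The main obstacle will be executing the $\min$--$\max$ calculation cleanly and verifying that the WLOG reduction is genuinely symmetric in $\{e_{IK}, e_{JL}\}$. Once one of these cross sums vanishes, the absolute values resolve unambiguously and the identity $2\min(s,t) + |s-t| = s+t$ supplies the final combinatorial coincidence that makes the geometric flip on $\widetilde{P}_{\tree_1}$ restrict to the algebraic wall-crossing on $\widetilde{S}_{\tree_1}$.
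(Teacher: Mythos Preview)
Your argument is correct and mirrors the paper's: both reduce to the last coordinate, rewrite $z_a,z_b,z_c,z_d,z_{2m-3}$ in terms of the block sums $\alpha_{XY}$, use the identity $2\min(s,t)+|s-t|=s+t$ to collapse the min--max expression, and invoke the standard-monomial hypothesis to force $\alpha_{IK}\cdot\alpha_{JL}=0$; the only organizational difference is that the paper isolates the min--max computation as a separate preliminary lemma (Lemma~\ref{lem: last entry flip}), obtaining the closed form $\alpha_{IJ}+|\alpha_{IK}-\alpha_{JL}|+\alpha_{KL}$ for the last coordinate of $\widetilde{{\sf F}}_{12}(\widetilde{M}_{\tree_1}\alpha)$ before comparing with $z'_{2m-3}$. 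One small caveat: the planar embedding places $I,J,K,L$ as consecutive \emph{cyclic} arcs, so it is not literally true that $i<j<k<l$ for every $i\in I,\,j\in J,\,k\in K,\,l\in L$, but since cyclic rotation preserves the crossing of the pairs $\{i,k\}$ and $\{j,l\}$, the standard-monomial condition still yields $\alpha_{ik}\alpha_{jl}=0$ termwise and your conclusion $e_{IK}e_{JL}=0$ stands (the paper's proof asserts $\alpha_{IK}\neq 0\Rightarrow\alpha_{JL}=0$ with the same implicit justification).
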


The following lemma will be useful to prove the above proposition. Here we use the notation introduced in Figure~\ref{fig: tree flip}.

\begin{lemma}\label{lem: last entry flip}
Let $\tree_1$ and $\tree_2$ correspond to two adjacent maximal-dimensional prime cones $C_{\tree_1}$ and $C_{\tree_2}$ in 
$\mathcal{T}(I_{2,m})$. Assume that $C_{\tree_1}$ and $C_{\tree_2}$ both lie in the maximal cone of the Gr\"obner fan corresponding to $\prec$. 
For $\alpha \in \Z^{\binom{m}{2}}_{\geq 0}$,
	$$
	(\widetilde{{\sf F}}_{12}(\widetilde{M}_{\tree_1}\alpha))_{2m-3}= \alpha_{IJ}+|\alpha_{IK}-\alpha_{JL}|+\alpha_{KL}
	$$
where the $\alpha_{IJ},\alpha_{IK},\ldots$ denote the sums of the form
	$$
	\alpha_{IJ}:=\sum_{i\in I, j\in J}\alpha_{ij}
	$$
	and similarly for the others. 
\end{lemma}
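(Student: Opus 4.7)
The plan is to carry out a direct substitution into the explicit flip formula~\eqref{eq: tilde flip}, followed by simplification using two elementary scalar identities. First I would compute the relevant coordinates of $\widetilde{M}_{\tree_1}\alpha$. By~\eqref{eq: cijk def}, the entry of $\widetilde{M}_{\tree_1}$ in row $a$ and column $\{i,j\}$ is $1$ exactly when the (unique) path in $\tree_1$ from leaf $i$ to leaf $j$ traverses $\varepsilon_a$; equivalently, when removing $\varepsilon_a$ places $i$ and $j$ in different connected components. In $\tree_1$, the edges $\varepsilon_a, \varepsilon_b, \varepsilon_c, \varepsilon_d$ (incident to the interior edge $\varepsilon_{2m-3}$) lead to the subtrees on leaves $I, J, K, L$ respectively, while $\varepsilon_{2m-3}$ itself separates $I\cup J$ from $K\cup L$. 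Writing $z_a,z_b,z_c,z_d,z_{2m-3}$ for the corresponding coordinates of $\widetilde{M}_{\tree_1}\alpha$, this immediately yields
\begin{align*}
z_a &= \alpha_{IJ}+\alpha_{IK}+\alpha_{IL}, & z_b &= \alpha_{IJ}+\alpha_{JK}+\alpha_{JL},\\
z_c &= \alpha_{IK}+\alpha_{JK}+\alpha_{KL}, & z_d &= \alpha_{IL}+\alpha_{JL}+\alpha_{KL},\\
z_{2m-3} &= \alpha_{IK}+\alpha_{IL}+\alpha_{JK}+\alpha_{JL}. &
\end{align*}

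Next I would substitute these into the formula~\eqref{eq: tilde flip}, which gives
\[
(\widetilde{\mathsf{F}}_{12}(\widetilde{M}_{\tree_1}\alpha))_{2m-3} = -z_{2m-3} + \min(z_a+z_d,\, z_b+z_c) + \max(|z_a-z_b|,\, |z_c-z_d|).
\]
A direct computation shows $(z_a+z_d)-(z_b+z_c)=2(\alpha_{IL}-\alpha_{JK})$, so
\[
\min(z_a+z_d,\, z_b+z_c) = \alpha_{IJ}+\alpha_{IK}+\alpha_{JL}+\alpha_{KL}+2\min(\alpha_{IL},\alpha_{JK}).
\]
Setting $p := \alpha_{IK}-\alpha_{JL}$ and $q := \alpha_{IL}-\alpha_{JK}$, I find $z_a-z_b = p+q$ and $z_c-z_d = p-q$. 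The elementary identity $\max(|p+q|,|p-q|)=|p|+|q|$ then yields
\[
\max(|z_a-z_b|,\, |z_c-z_d|) = |\alpha_{IK}-\alpha_{JL}|+|\alpha_{IL}-\alpha_{JK}|.
\]

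Finally, I would combine all three contributions and apply the identity $2\min(\alpha_{IL},\alpha_{JK})-(\alpha_{IL}+\alpha_{JK}) = -|\alpha_{IL}-\alpha_{JK}|$. The terms involving $\alpha_{IL}$ and $\alpha_{JK}$ collapse to $-|\alpha_{IL}-\alpha_{JK}|$ from the $\min$-contribution plus $+|\alpha_{IL}-\alpha_{JK}|$ from the $\max$-contribution, which cancel; the terms $\alpha_{IK},\alpha_{JL}$ from $-z_{2m-3}$ and from the $\min$-contribution also cancel, leaving precisely
\[
\alpha_{IJ} + |\alpha_{IK}-\alpha_{JL}| + \alpha_{KL},
\]
as claimed. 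There is no real obstacle to the argument: once the two scalar identities $\max(|p+q|,|p-q|)=|p|+|q|$ and $a+b-2\min(a,b)=|a-b|$ are recognized, the proof reduces to bookkeeping. I note that the lemma is stated for arbitrary $\alpha\in\Z_{\ge 0}^{\binom{m}{2}}$ and does not require $p^\alpha\in\mathcal{S}(\prec,I_{2,m})$; the standardness hypothesis will only enter later, when this lemma is applied inside the proof of Proposition~\ref{lem: cal gr2n geom equals alg}.
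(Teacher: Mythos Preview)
Your proof is correct and follows essentially the same approach as the paper: both compute the coordinates $z_a,z_b,z_c,z_d,z_{2m-3}$ from~\eqref{eq: cijk def}, evaluate $\min(z_a+z_d,z_b+z_c)$ and $\max(|z_a-z_b|,|z_c-z_d|)$ in terms of the $\alpha_{XY}$, and then collapse everything using the identity $2\min(a,b)+|a-b|=a+b$. Your treatment is in fact slightly more explicit than the paper's, since you isolate the substitution $p=\alpha_{IK}-\alpha_{JL}$, $q=\alpha_{IL}-\alpha_{JK}$ and invoke $\max(|p+q|,|p-q|)=|p|+|q|$ to justify the $\max$ formula that the paper simply asserts.
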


\begin{proof} 
	Recall from \eqref{eq: tilde flip} that if $(z_a)_{a=1,\ldots,2m-3}$ denote the coordinate entries of $\widetilde{M}_{\tree_1}\alpha$, then
\begin{equation}\label{eq: F12 formula}
	(\widetilde{{\sf F}}_{12}(\widetilde{M}_{\tree_1}\alpha))_{2m-3}
	=
	-z_{2m-3}+\min(z_a+z_d,z_b+z_c)+\max(|z_a-z_b|,|z_c-z_d|).
	\end{equation} 
From the formula~\eqref{eq: cijk def} for the matrix entries of $\widetilde{M}_{\tree_1}$, it follows -- using the notation of Figure~\ref{fig: tree flip} -- that 
for any edge $\varepsilon_h$, the coordinate $z_h$ in $\widetilde{M}_{\tree_1} \alpha$ equals the sum of the exponents $\alpha_{ij}$ such that the path from $i$ to $j$ uses $\varepsilon_h$.
Applying this to $a$, $b$, $c$, $d$, and $2m-3$ we conclude
	\begin{align*}
	z_{a}&=\sum\alpha_{ij}c^{ij}_{a}=\alpha_{IJ}+\alpha_{IK}+\alpha_{IL},
	&z_{d}=\sum\alpha_{ij}c^{ij}_{d}=\alpha_{IL}+\alpha_{JL}+\alpha_{KL},\\
	z_{b}&=\sum\alpha_{ij}c^{ij}_{b}=\alpha_{IJ}+\alpha_{JK}+\alpha_{JL},
	&z_{c}=\sum\alpha_{ij}c^{ij}_{c}=\alpha_{IK}+\alpha_{JK}+\alpha_{KL}.
	\end{align*} 
	
	We also have 
	\begin{equation}\label{eq: formula z2m-3}
	 z_{2m-3}=\sum\alpha_{ij}c^{ij}_{2m-3}=\alpha_{IK}+\alpha_{IL}+\alpha_{JK}+\alpha_{JL}.
	\end{equation} 
	
It is straightforward to compute
	\begin{equation}\label{eq: min}
	\min(z_a+z_d,z_b+z_c)
	=
	\alpha_{IJ}+\alpha_{IK}+\alpha_{JL}+\alpha_{KL}+2\min(\alpha_{IL},\alpha_{JK}),
	\end{equation}
and
	\begin{equation}\label{eq: max} 
	\max(|z_a-z_b|,|z_c-z_d|)
	= 	|\alpha_{JK}-\alpha_{IL}|+|\alpha_{JL}-\alpha_{IK}|.
	\end{equation}
Since 
	$$
	2\min(\alpha_{IL},\alpha_{JK})+|\alpha_{JK}-\alpha_{IL}|=\alpha_{IL}+\alpha_{JK},
	$$
then by combining~\eqref{eq: F12 formula}, ~\eqref{eq: formula z2m-3}, ~\eqref{eq: min} and~\eqref{eq: max} we obtain 
	\begin{equation*}
	(\widetilde{{\sf F}}_{12}(\widetilde{M}_{\tree_1}\alpha))_{2m-3}
	=\alpha_{IJ}+|\alpha_{JL}-\alpha_{IK}|+\alpha_{KL} 
	\end{equation*} 
	as desired. 
\end{proof}

We now prove Proposition~\ref{lem: cal gr2n geom equals alg}.

\begin{proof}[Proof of Proposition~\ref{lem: cal gr2n geom equals alg}]
We first claim that $\widetilde{\Theta}$ and $\widetilde{{\sf F}}_{12}$ agree on the first $2m-2$ coordinates. Indeed, note that the flip map ${\sf F}_{12}$ is the identity on the first $2m-2$ coordinates and thus so is $\widetilde{{\sf F}}_{12} = \gamma^{-1} \circ {\sf F}_{12} \circ \gamma$. The same is true of $\widetilde{\Theta}$, since the matrices $\widetilde{M}_{\tree_1}$ and $\widetilde{M}_{\tree_2}$ are identical except for the bottom rows. Thus it remains to see that $\widetilde{\Theta}$ and $\widetilde{{\sf F}}_{12}$ agree on the last $(2m-3)$-th coordinate.

Let $\alpha\in\Z_{\ge 0}^{\binom{m}{2}}$ such that $p^{\alpha} \in \mathcal{S}(\prec,I_{2,m})$. 
In the notation of Figure~\ref{fig: tree flip}, where the right hand figure corresponds to the tree $\tree_2$ and from the formula~\eqref{eq: cijk def} for the entries of $\widetilde{M}_{\tree_2}$ we conclude that 
the $2m-3$-th coordinate of $\widetilde{\Theta}(\widetilde{M}_{\tree_1}\alpha) = \widetilde{M}_{\tree_2}\alpha$ is
	\begin{equation}
	(\widetilde{M}_{\tree_2}\alpha)_{2m-3}=
	\alpha_{IJ}+\alpha_{IK}+\alpha_{JL}+\alpha_{KL}.
	\label{eq: no cross algebraic}\end{equation}
We take cases. If $\alpha_{IK}\neq 0$, then since $\alpha$ corresponds to a standard monomial we know $\alpha_{JL}=0$. Therefore by \eqref{eq: no cross algebraic} and Lemma~\ref{lem: last entry flip} we conclude 
	$$
	(\widetilde{{\sf F}}_{12}(\widetilde{M}_{\tree_1}\alpha))_{2m-3}
	=\alpha_{IJ}+\alpha_{IK}+\alpha_{KL}
	=(\widetilde{\Theta}(\widetilde{M}_{\tree_1}\alpha))_{2m-3}.
	$$
On the other hand, $\alpha_{IK}=0$ then by \eqref{eq: no cross algebraic} and Lemma~\ref{lem: last entry flip} we have
	$$
	(\widetilde{{\sf F}}_{12}(\widetilde{M}_{\tree_1}\alpha))_{2m-3}
	=\alpha_{IK}+\alpha_{JK}+\alpha_{KL}
	=(\widetilde{\Theta}(\widetilde{M}_{\tree_1}\alpha))_{2m-3}
	$$
	which proves the claim. 
\end{proof}

\begin{proof}[Proof of Theorem~\ref{theorem: main Gr2m}]
By Remark~\ref{rem: maps extend to cones} the geometric wall-crossing map $\mathsf{F}_{12}$ of~\eqref{eq: flip map} is a wall-crossing map for the cones which restricts to the flip geometric wall-crossing of the Newton-Okounkov bodies.
Let $M_{\tree_1}\alpha\in S_{\tree_1}$. Since $\gamma$ is an invertible linear map mapping, for all $ij$, the $ij$-th column of $\widetilde{M}_{\tree_1}$ to the $ij$-th column of $M_{\tree_2}$ then 
	\begin{equation}
	\gamma(\widetilde{M}_{\tree_1}\alpha)=M_{\tree_1}\alpha \quad \text{and} \quad \gamma^{-1}(M_{\tree_1}\alpha)=\widetilde{M}_{\tree_1}\alpha.
	\label{eq: gamma is linear}\end{equation}
Let $\alpha$ be such that $p^\alpha\in \mathcal{S}(\prec,I_{2,m})$. We compute 
	\begin{align*}
	{\sf F}_{12}(M_{\tree_1}\alpha)&=\gamma\circ \widetilde{{\sf F}}_{12}\circ\gamma^{-1}(M_{\tree_1}\alpha),  &\text{by~\ref{eq: flip to flip}}\\
	&=\gamma\circ \widetilde{{\sf F}}_{12}(\widetilde{M}_{\tree_1}\alpha), &\text{by \eqref{eq: gamma is linear}}\\
	&=\gamma(\widetilde{\Theta}(\widetilde{M}_{\tree_1} \alpha)) & \text{ by Proposition~\ref{lem: cal gr2n geom equals alg}} \\ 
	& = \gamma(\widetilde{M}_{\tree_2} \alpha) & \textup{ by definition of $\widetilde{\Theta}$} \\ 
	& = M_{\tree_1}\alpha & \textup{ by ~\eqref{eq: gamma is linear}} \\
	& = \Theta(M_{\tree_1}\alpha) & \textup{ by definition of $\widetilde{\Theta}$} 
	\end{align*}
	as desired. 
\end{proof}

\appendix
\section{ Newton-Okounkov Body Wall-crossing via Complexity-One $T$-Varieties}
\begin{center}
	\emph{by} {\bf Nathan Ilten}\footnote{
	Department of Mathematics, Simon Fraser University, 8888 University Drive, Burnaby, British Columbia, V5A 1S6, Canada. Email: \texttt{nilten@sfu.ca
	}
	}

\end{center}

\subsection{Preliminaries}\label{sec:prelim}
We continue with notation as in the main paper. In particular, we have a presentation $\C[x_1,\ldots,x_n]/I\cong A$ of a $(d+1)$-dimensional positively graded integral domain, and two $(d+1)$-dimensional prime cones $C_1$ and $C_2$ in $\trop(I)$ which intersect in a codimension-one face $C$. 
In this appendix, we show how the main features of wall-crossing for the Newton-Okounkov bodies as outlined in Theorem~\ref{theorem:main} follow from the theory of complexity-one $T$-varieties.

The main algebraic objects we will consider are as follows. 
Let $N=\Z^n\cap \langle C \rangle$, and $R=\C[x_1,\ldots,x_n]/\init_C(I)$.
The dual lattice to $N$ is  $M=(\Z^n)^*/N^\perp =N^*$.
Then
\begin{enumerate}
	\item The ring $R$ is an $M$-graded integral domain of dimension $d+1$;
	\item $R$ is finitely generated as a $\C$-algebra;
	\item The degree zero piece $R_0$ of $R$ is $\C$;
	\item The set of those $v\in M$ with $R_v\neq 0$ generates all of $M$, which is a rank $d$ lattice.
\end{enumerate}
Rings $R$ satisfying these four properties are exactly the coordinate rings of (potentially non-normal) affine complexity-one $T$-varieties with a good $T$-action (where $T$ is the algebraic torus $\Spec \C[M]$).\footnote{Recall that a torus action on an affine variety is \emph{good} if the only invariant regular functions are constants.} We will thus call rings satisfying these four properties \emph{good complexity-one} $M$-graded domains.

\subsection{Polyhedral Divisors}\label{sec:pdiv}
We fix a lattice $M$ and a smooth projective curve $Y$. 
Let $\omega$ be a full-dimensional cone in $M_\R=M\otimes \R$.
	A \emph{polyhedral divisor} on $Y$ with weight cone $\omega$ is a 
	finite formal sum 
	\[
		\D=\sum_{P\in\mcP} \D_P\otimes P
	\]
	where $\mcP$ is a finite set of points of $Y$ and the $\D_P$ are piecewise linear concave functions 
	\[
\D_P:\omega\to \R
	\]
with rational slopes. See {\cite[\S2-3]{AH}} for details.
For any $v\in \omega\cap M$, we obtain a $\Q$-divisor \[\D(v):=\sum_{P\in\mcP} \D_P(v)\cdot P\] on $Y$.
	We say $\D$ is a \emph{p-divisor} if  $\deg \D(v)>0$ for $v\in M$ in the interior of $\omega$, and for every $v\in M$ in the boundary of $\omega$, either $\deg \D(v)>0$, or $\D(v)$ has a principal multiple.

To any p-divisor $\D$ as above, we may associate a \emph{normal} 
good complexity-one $M$-graded domain \cite[Theorem 3.1]{AH}:
\[
R(\D)=	\bigoplus_{v\in \omega \cap M} H^0\left(Y,\CO_Y\left(\D(v)\right)\right)\cdot \chi^{v}.
\]
Furthermore, every normal good complexity-one $M$-graded domain $R$ arises in this fashion \cite[Theorem 3.4]{AH}.
In geometric terms, there is a bijection between equivariant isomorphism classes of normal affine varieties with good complexity-one torus action, and p-divisors on smooth projective curves modulo a natural equivalence relation, see \cite{AH}.

\subsection{Newton-Okounkov Bodies}\label{sec:NO}
Consider a good complexity-one $M$-graded domain $R$.
We now fix a $\Z$-grading on $R$ by considering a projection $\deg: M\to \Z$ satisfying $\deg^{-1}(0)\cap \omega=0$. Set $\Box=\omega\cap \deg^{-1}(1)$.
By the discussion of \S\ref{sec:pdiv}, there is a p-divisor $\D=\sum_{P\in\mcP} \D_P\otimes P$ on a curve $Y$ such that the integral closure of $R$ is isomorphic to $R(\D)$; we identify $R$ with its image in $R(\D)$. In general, $R$ may not be equal to $R(\D)$, as $R$ may not be integrally closed.

Fix a total ordering on $M$. For any point $Q\in Y$, we obtain a valuation
\begin{align*}
\val_Q:R(\D)\setminus \{0\}&\to M\times \Z\\
f&\mapsto \min_{v: f_v\neq 0} (v,\ord_Q(f_v))
\end{align*}
where $f=\sum_{v\in M} f_v$ is the decomposition of $f$ into homogeneous pieces, $\ord_Q(f)$ is the order of vanishing of $f$ at $Q$, and we take the lexicographic ordering on $M\times \Z$.
The valuation $\val_Q$ restricts to a valuation on $R$.

\begin{lemma}\label{lemma:NO}
The Newton-Okounkov body $\Delta(R,\val_Q)$ is equal to
\begin{equation}\label{eqn:NO}
\tag{$\dagger$}	\Delta(R,\val_Q)= \left\{ (x,y)\in\Box\times \R\ |\ -\D_Q(x)\leq y \leq \sum_{\substack{P\in\mcP\\P\neq  Q}} \D_P(x)\right\}.
\end{equation}
For the case $Q\notin \mcP$, we use the convention that $\D_Q=0$.
\end{lemma}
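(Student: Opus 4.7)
My plan is to first establish \eqref{eqn:NO} in the normal case, where $R$ is replaced by its integral closure $R(\D)$ and the Altmann--Hausen description gives direct control over homogeneous elements, and then to transfer the conclusion back to $R$ using the finiteness of the integral extension.

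For the normal case, I would analyze the valuation $\frakval_Q$ on homogeneous elements of $R(\D)$ directly. Any nonzero element of degree $v\in\omega\cap M$ has the unique form $f=g\cdot\chi^v$ with $g\in H^0(Y,\CO_Y(\D(v)))\setminus\{0\}$, and $\frakval_Q(f)=(v,\ord_Q(g))$. The defining condition $\mathrm{div}(g)+\D(v)\geq 0$ forces $\ord_Q(g)\geq -\D_Q(v)$, while the fact that any principal divisor has degree zero, combined with $\ord_P(g)\geq -\D_P(v)$ for $P\neq Q$, yields
\[
\ord_Q(g)\;=\;-\sum_{P\neq Q}\ord_P(g)\;\leq\;\sum_{P\neq Q}\D_P(v).
\]
Rescaling to the slice $\Box\times\R$ then gives the ``$\subseteq$'' direction of \eqref{eqn:NO} for $R(\D)$.

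For the reverse inclusion, I would show that every rational interior point $(x,y)$ of the right-hand side of \eqref{eqn:NO} is realized, up to scaling, by a homogeneous element of $R(\D)$. Choose $k>0$ divisible enough that $kx\in M$ and $ky\in\Z$. Since $x$ lies in the interior of $\omega$ and $\D$ is a p-divisor, the auxiliary divisor $\D(kx)-ky\cdot Q$ on $Y$ has degree $\sum_{P\in\mcP}\D_P(kx)-ky$, which grows linearly in $k$; for $k$ sufficiently large this degree exceeds $2g(Y)-2$. Riemann--Roch then implies the strict inequality $h^0(\D(kx)-ky\cdot Q)>h^0(\D(kx)-(ky+1)Q)$, producing a section $g\in H^0(Y,\CO_Y(\D(kx)))$ with $\ord_Q(g)=ky$ exactly. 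Boundary points of \eqref{eqn:NO} are then recovered by taking closures of the set of rational interior points already achieved.

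Finally, to transfer from $R(\D)$ to $R$, I would use that the inclusion $R\hookrightarrow R(\D)$ is a finite integral extension of graded domains sharing the same $M$-graded field of fractions. The inclusion immediately gives $\Delta(R,\frakval_Q)\subseteq\Delta(R(\D),\frakval_Q)$. For the reverse direction, each homogeneous $f\in R(\D)$ satisfies a monic polynomial relation over $R$; by extracting its $M$-homogeneous components one obtains coefficients $a_i\in R$ of degree $(n-i)\deg(f)$, and applying $\frakval_Q$ together with the ultrametric axiom forces a positive multiple of $\frakval_Q(f)$ to coincide with some $\frakval_Q(a_i)-\frakval_Q(a_j)$ where both $a_i,a_j\in R$. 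It follows that $S(R,\frakval_Q)$ and $S(R(\D),\frakval_Q)$ span the same rational cone in $M_\R\times\R$, and hence their slices at $\Box\times\R$ coincide, giving \eqref{eqn:NO} for $R$. I anticipate the main obstacle to be this final transfer step, both because one must track $M$-homogeneity carefully through the integral equation and because boundary strata of $\omega$ (where the p-divisor positivity condition degenerates) require separate attention to ensure that the Newton--Okounkov bodies of $R$ and $R(\D)$ truly coincide rather than only agreeing on the interior.
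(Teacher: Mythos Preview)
Your argument for the normal case $R(\D)$ is essentially the same as the paper's (which reproduces Petersen's argument): both directions use the divisor condition for $\subseteq$ and Riemann--Roch for $\supseteq$. Your version produces a section with $\ord_Q$ equal to $ky$ on the nose, whereas the paper only produces a sequence of sections whose normalized orders converge to $y$; either works, since the Newton--Okounkov body is closed.

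The gap is in your transfer step from $R(\D)$ back to $R$. From the monic integral equation $f^n+\sum_{i<n}a_if^i=0$ with $M$-homogeneous $a_i\in R$, the ultrametric axiom indeed forces two indices $i>j$ (allowing $a_n=1$) with $(i-j)\,\frakval_Q(f)=\frakval_Q(a_j)-\frakval_Q(a_i)$. But this only places $\frakval_Q(f)$ in the \emph{group} generated by $S(R,\frakval_Q)$, not in the cone: unless one of the two indices is $n$, the resulting expression for the last coordinate of the normalized value $\frac{1}{\deg f}\frakval_Q(f)$ is an affine combination of the corresponding normalized values for $a_i$ and $a_j$ with one \emph{negative} coefficient, so it need not lie in the fiber of $\Delta(R,\frakval_Q)$. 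Nothing in your argument rules out the case $i<n$ (and indeed it can occur). The paper sidesteps this entirely by invoking \cite[Proposition~2.18]{IW}, whose proof goes through a comparison of Hilbert functions: since $R\hookrightarrow R(\D)$ is a finite extension of graded domains, $\dim_\C R_k$ and $\dim_\C R(\D)_k$ agree asymptotically, hence the two Newton--Okounkov bodies have the same volume, and since one contains the other they coincide. Replacing your integral-equation sketch with this volume argument (or citing the result directly) closes the gap; the boundary-strata concern you flag is then automatically handled as well.
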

\begin{proof}
	We observe that $\Delta(R,\val_Q)=\Delta(R(\D),\val_Q)$ by e.g.~\cite[Proposition 2.18]{IW}, so we reduce to the case $R=R(\D)$. In the case $Y=\P^1$, we may now apply \cite[Theorem 5.10]{IM}. For arbitrary $Y$,  we may apply Petersen's description of Newton-Okounkov bodies for complexity-one $T$-varieties \cite[Proposition 3.13]{Pet}. 
	For the sake of the reader, we reproduce Petersen's argument below.

	Consider any homogeneous element $f\cdot \chi^v\in R(\D)$ of degree $v$. Since $f$ is in $H^0(Y,\CO(\D(v)))$, we obtain $0\leq \ord_Q f+\D_Q(v)\leq \sum_{P\in\mcP} \D_P(v)$. Since the image of $\val_Q$ is determined by valuations of homogeneous elements, it follows that $\Delta(R,\val_Q)$ is contained in the expression on the right hand side of \eqref{eqn:NO}.

	Conversely, choose any rational point $x$ in the relative interior of $\Box$ and $y\in \Q$ with $(x,y)$ contained in the right hand side of \eqref{eqn:NO}. There exists a natural number $\lambda$ such that $v=\lambda\cdot x\in M$ and $\D_P(v)$ is integral for all $P\in\mcP$. Since $x$ is in the relative interior of $\Box$, the $\Z$-divisor $\D(v)$ has positive degree. By the theorem of Riemann-Roch, there thus exists a sequence of sections $s_i\in H^0(Y,\CO(\D(i\cdot v)))$  such that 
	\[
		\lim_{i\to \infty} \frac{\ord_Q(s_i)}{i\cdot \lambda}=y.
	\]
Since $\Delta(R,\val_Q)$ is a closed set, this implies that $(x,y)$, and thus the entire right hand side of \eqref{eqn:NO}, is contained in $\Delta(R,\val_Q)$.
\end{proof}

\subsection{Wall-Crossing}\label{sec:wall}
We now take $R$ to be as in \S\ref{sec:prelim}, the degeneration of $A$ corresponding to the cone $C=C_1\cap C_2$. Let $u_1,\ldots,u_d\in \Z^{n}$ be elements of the relative interior of $C$ which form a lattice basis for $N$. We assume that $u_1\in\Z^n$ is the primitive vector giving the $\Z$-grading of the variables $x_i$; in the standard graded case it is just $(1,\ldots,1)$. Likewise, for $i=1,2$ let $w_i\in \Z^{n}$ be in the relative interior of $C_i$ such that $u_1,\ldots,u_d,w_i$ form a lattice basis for $\langle C_i\rangle \cap \Z^{n}$. As in \S\ref{section:background}, the collection of vectors $u_1,\ldots,u_d,w_i$ gives rise to valuations on both $A$ and $R$, both of which we denote by $\val_i$. 

\begin{theorem}\label{thm:NO}
	There exists a rational polytope $\Delta\subset \{1\}\times \R^{d-1}\subset \R^d$, and piecewise affine-linear concave functions $\Psi_0,\Psi_1,\Psi_2$ with rational slopes and translation from $\Delta$ to $\R$ satisfying $\Psi_0+\Psi_1+\Psi_2\geq 0$, such that 
\begin{align*}
&	\Delta(A,\val_1)= \left\{ (x,y)\in\Delta\times \R\ |\ -\Psi_1(x)\leq y \leq \Psi_2(x)+\Psi_0(x)\right\};\\
&	\Delta(A,\val_2)= \left\{ (x,y)\in\Delta\times \R\ |\ -\Psi_2(x)\leq y \leq \Psi_1(x)+\Psi_0(x)\right\}.
\end{align*}
In particular, under the projection from $\R^d\times \R$ to $\R^d$,
$\Delta(A,\val_1)$ and $\Delta(A,\val_2)$ have the same image $\Delta$, with fibers of equal Euclidean lengths.
\end{theorem}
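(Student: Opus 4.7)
The strategy is to realize $R = \C[x_1,\ldots,x_n]/\In_C(I)$ via a polyhedral divisor on a smooth projective curve $Y$, identify the weight valuations $\frakval_1$ and $\frakval_2$ on $A$ with valuations $\frakval_{Q_1}, \frakval_{Q_2}$ coming from two points of $Y$, and then read off the explicit shapes of the Newton-Okounkov bodies from Lemma~\ref{lemma:NO}. By Section~\ref{sec:prelim}, $R$ is a good complexity-one $M$-graded domain (with $M$ of rank $d$). Hence by \cite[Theorem 3.4]{AH}, there exist a smooth projective curve $Y$ and a p-divisor $\D = \sum_{P \in \mcP} \D_P \otimes P$ on $Y$, with some weight cone $\omega \subset M_\R$, such that the normalization of $R$ is $R(\D)$. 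The homogeneous grading on $A$ descends along the degeneration to a $\Z$-grading on $R$ given by a projection $\deg: M \to \Z$ with $\deg^{-1}(0) \cap \omega = 0$; I set $\Delta := \omega \cap \deg^{-1}(1)$, which is a rational polytope naturally sitting inside $\{1\} \times \R^{d-1} \subset \R^d$.

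The central claim I would need to establish is that each weight valuation $\frakval_i$ on $A$ agrees, upon passing to the associated graded $gr_{\frakval_i}(A) \cong R$ and then to its normalization $R(\D)$, with the geometric valuation $\frakval_{Q_i}$ of Section~\ref{sec:NO} for a uniquely determined closed point $Q_i \in Y$. Since Newton-Okounkov bodies depend only on the value semigroup and are preserved under both taking the associated graded and passing to the integral closure (cf.\ \cite[Proposition 2.18]{IW}), this would give $\Delta(A, \frakval_i) = \Delta(R(\D), \frakval_{Q_i})$. This identification is the main obstacle: intuitively, the further Gr\"obner degeneration of $R$ by the vector $w_i \in C_i$ corresponds to isolating a single closed point $Q_i$ of the rational $T$-quotient of $\Spec R$, and the secondary lex-coordinate of $\frakval_i$ determined by $w_i$ recovers the order of vanishing at $Q_i$ built into the p-divisor data. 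I would prove the identification either by comparing the two valuations on a Khovanskii basis, or by a direct matching between $w_i$ and the piecewise-linear slopes of the $\D_P$.

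Granted the identification, define
\[
\Psi_1 := \D_{Q_1}, \qquad \Psi_2 := \D_{Q_2}, \qquad \Psi_0 := \sum_{\substack{P \in \mcP \\ P \neq Q_1,\, P \neq Q_2}} \D_P
\]
on $\Delta$, using the convention $\D_P \equiv 0$ when $P \notin \mcP$. Each $\D_P$ is piecewise affine-linear, concave, and of rational slopes by definition of a polyhedral divisor, so the same is inherited by $\Psi_0, \Psi_1, \Psi_2$. Applying Lemma~\ref{lemma:NO} at $Q_1$ and at $Q_2$ and regrouping the sum $\sum_{P \neq Q_i} \D_P(x) = \D_{Q_{3-i}}(x) + \Psi_0(x)$ yields precisely the claimed formulas for $\Delta(A, \frakval_1)$ and $\Delta(A, \frakval_2)$. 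The inequality $\Psi_0 + \Psi_1 + \Psi_2 \geq 0$ on $\Delta$ holds because the sum equals $\deg \D(x)$, which is non-negative on $\omega$ by the defining condition of a p-divisor (strictly positive on the interior, and zero with a principal multiple on the rest of the boundary). Finally, over any $x \in \Delta$ both fibers of the projection $\Delta(A,\frakval_i) \to \Delta$ have common Euclidean length $\Psi_0(x) + \Psi_1(x) + \Psi_2(x)$, so they coincide exactly, sharpening the ``up to a global constant'' conclusion of Theorem~\ref{theorem: fiber lengths equal} in the present setting.
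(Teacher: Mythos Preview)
Your overall strategy---pass to $R$, realize its normalization as $R(\D)$ for a p-divisor $\D$ on a curve $Y$, identify each $\frakval_i$ with a point-based valuation at some $Q_i\in Y$, and then invoke Lemma~\ref{lemma:NO}---is the paper's strategy. But two of your intermediate claims are not correct as stated, and fixing them changes the formulas for the $\Psi_i$.

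First, $gr_{\frakval_i}(A)$ is \emph{not} $R$. The ring $R=\C[x_1,\ldots,x_n]/\In_C(I)$ is the associated graded for the rank-$d$ valuation coming from the face $C$, whereas $gr_{\frakval_i}(A)\cong\C[x_1,\ldots,x_n]/\In_{C_i}(I)$ is a further (toric) degeneration. The correct reduction is that the \emph{same} weight valuation $\frakval_i$ (built from $u_1,\ldots,u_d,w_i$) can be imposed on both $A$ and $R$, and $\Delta(A,\frakval_i)=\Delta(R,\frakval_i)$ because $\In_{w_i}(\In_C(I))=\In_{C_i}(I)$ forces the value semigroups to coincide. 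Second, and more importantly, the identification $\frakval_i=\frakval_{Q_i}$ is in general false on the nose: adapting \cite[Proposition~5.1]{IM}, one obtains a point $Q_i\in Y$, a constant $c_i\in\N$, and a \emph{linear} map $\gamma_i:M\to\Z$ such that on $M$-homogeneous elements $\frakval_i(f\chi^v)=(\phi(v),\,c_i\cdot\ord_{Q_i}(f)+\gamma_i(v))$. The lattice-basis hypothesis on $u_1,\ldots,u_d,w_i$ forces $c_i=1$, but the shift $\gamma_i$ need not vanish. Consequently your definitions $\Psi_i=\D_{Q_i}$ are off by this linear term; the correct choice is $\Psi_i=\D_{Q_i}\circ\phi^{-1}+\gamma_i\circ\phi^{-1}$ (with $\Psi_0$ as you wrote). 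Since $\gamma_i$ is linear, concavity and rationality survive and the fiber-length computation is unaffected, but without the $\gamma_i$ correction the displayed descriptions of $\Delta(A,\frakval_i)$ simply will not match $\Delta(R,\frakval_i)$.
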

\begin{proof}
By construction, $\Delta(A,\val_i)=\Delta(R,\val_i)$; we will show that
$\Delta(R,\val_i)$ has the desired form.
As in \S\ref{sec:NO}, we identify $R$ with a subring of $R(\D)$ for some p-divisor $\D$ on a curve $Y$. The vectors $u_1,\ldots,u_d$ give an isomorphism 
$\phi:M\to \Z^d$ by sending $v\in M$ to $(\langle u_1,v\rangle,\ldots,\langle u_d,v\rangle)$.
A straightforward adaptation of the arguments of \cite[Proposition 5.1]{IM} from the case $Y=\P^1$ to arbitrary $Y$ shows that 
there exists a point $Q_i\in Y$, constant $c_i\in \N$, and linear map $\gamma_i:M\to \Z$ such that 
the valuation $\val_i:R\setminus\{0\}\to \Z^d\times \Z$ has the form
\[
	\val_i(f)=\min_{v:f_v\neq 0} (\phi(v),c_i\cdot \ord_{Q_i}(f_v)+\gamma_i(v))
\]
for $f=\sum_{v\in M} f_v$.
Since $u_1,\ldots,u_d,w_i$ is a basis for $\langle C_i \rangle \cap \Z^n$, the group generated by $\val_i(R\setminus\{0\})$ must be all of $\Z^d\times \Z^1$. This implies that $c_i=1$.

Both valuations $\val_i$ and $\val_{Q_i}$ are determined by their behavior on $M$-homogeneous elements. Together with Lemma \ref{lemma:NO}, this implies that 
	$\Delta(R,\val_i)$ consists of those $(x,y)\in\phi(\Box)\times \R$ satisfying
	\[\gamma_i(\phi^{-1}(x))-\D_{Q_i}(\phi^{-1}(x))\leq y \leq\gamma_i(\phi^{-1}(x))+ \sum_{\substack{P\in\mcP\\P\neq Q_i}} \D_P(\phi^{-1}(x)).
\]
By abuse of notation, we are using $\phi$ and $\gamma_i$ to also denote their linear extensions to $M_\R$.
As in Lemma \ref{lemma:NO}, $\D_{Q}=0$ for $Q\notin \mcP$.
The map $\deg:M\to \Z$ giving a $\Z$-grading on $R$ is the map induced by $u_1\in N$.

We set 
\begin{align*}
	\Delta=\phi(\Box),\quad
	\Psi_0&=\gamma_1\circ\phi^{-1}+\gamma_2\circ\phi^{-1}+\sum_{\substack{P\in\mcP\\P\neq Q_1,Q_2}} \D_P\circ\phi^{-1};\\
	\Psi_i&=\D_{Q_i}\circ\phi^{-1}-\gamma_i\circ \phi^{-1}\qquad{i=1,2}
\end{align*}
to obtain the main claim of the theorem. For the claim regarding fiber lengths, we observe that for both Newton-Okounkov bodies, the length of the fiber over $x\in \Delta$ is exactly $\Psi_0(x)+\Psi_1(x)+\Psi_2(x)$. 
\end{proof}
\subsection{Example}\label{sec:ex}
Consider the ideal $I=\langle x_1x_2-x_3x_4-x_4^2-x_5^2\rangle\subset\C[x_1,\ldots,x_5]$. 
The tropicalization $\trop(I)$ is the product of its $2$-dimensional lineality space with a cone over the complete graph $K_4$ on four vertices.
The initial ideals $I_1=\langle x_1x_2-x_3x_4\rangle$ and $I_2=\langle x_1x_2-x_4^2\rangle$ correspond to prime cones $C_1$ and $C_2$;  for $C=C_1\cap C_2$ we have $\init_C(I)=\langle x_1x_2-x_3x_4-x_4^2\rangle$.

We may take the elements $u_1,u_2,u_3,w_1,w_2$ to be the rows of the matrix
{\scriptsize{\[
\left(
\begin{array}{c c c c c}
1&1&1&1&1\\
1&-1&0&0&0\\
0&0&0&0&1\\
\hline
1&0&0&1&1\\
\hline
0&0&1&0&0
\end{array}\right).
\]}}
\noindent Then $\Delta(A,\val_1)$ and $\Delta(A,\val_2)$ are the convex hulls of the columns of this matrix, after removing the fifth and fourth rows, respectively.

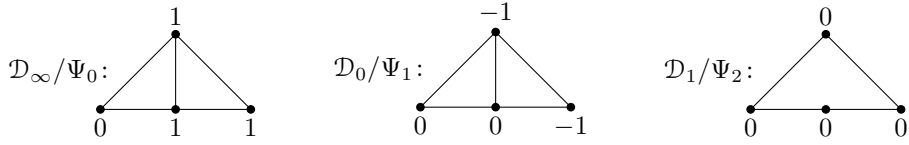
\begin{figure}
\begin{center}
\begin{tikzpicture}
\draw (0,0) -- (2,0) -- (1,1) -- (0,0);
\draw (1,0) -- (1,1);
\draw[fill] (0,0) circle [radius=.05];
\draw[fill] (1,0) circle [radius=.05];
\draw[fill] (2,0) circle [radius=.05];
\draw[fill] (1,1) circle [radius=.05];
\node [below] at (0,0) {$0$};
\node [below] at (1,0) {$1$};
\node [below] at (2,0) {$1$};
\node [above] at (1,1) {$1$};
\node  at (-.5,.5) {$\D_\infty/\Psi_0\colon$};
\end{tikzpicture}
\hspace{.6cm}
\begin{tikzpicture}
\draw (0,0) -- (2,0) -- (1,1) -- (0,0);
\draw (1,0) -- (1,1);
\draw[fill] (0,0) circle [radius=.05];
\draw[fill] (1,0) circle [radius=.05];
\draw[fill] (2,0) circle [radius=.05];
\draw[fill] (1,1) circle [radius=.05];
\node [below] at (0,0) {$0$};
\node [below] at (1,0) {$0$};
\node [below] at (2,0) {$-1$};
\node [above] at (1,1) {$-1$};
\node  at (-.5,.5) {$\D_0/\Psi_1\colon$};
\end{tikzpicture}
\hspace{.6cm}
\begin{tikzpicture}
\draw (0,0) -- (2,0) -- (1,1) -- (0,0);
\draw[fill] (0,0) circle [radius=.05];
\draw[fill] (1,0) circle [radius=.05];
\draw[fill] (2,0) circle [radius=.05];
\draw[fill] (1,1) circle [radius=.05];
\node [below] at (0,0) {$0$};
\node [below] at (1,0) {$0$};
\node [below] at (2,0) {$0$};
\node [above] at (1,1) {$0$};
\node  at (-.5,.5) {$\D_1/\Psi_2\colon$};
\end{tikzpicture}
\end{center}
\caption{Piecewise linear functions for wall-crossing}\label{fig:D}
\end{figure}

We now view this example from the perspective of $T$-varieties. The lattice $N$ is the rank three lattice generated by the first three rows of the above matrix. This gives an identification of $N$ with $\Z^3$, and we obtain an induced identification of $M$ with $\Z^3$. Under this identification, the image of $x_i$ in $R=\C[x_1,\ldots,x_5]/\init_C(I)$ is homogeneous of degree equal to the first three entries in the $i$th column of the above matrix. 

Let $\omega$ be the cone in $\R^3$ generated by $(1,1,0)$, $(1,-1,0)$, and $(1,0,1)$. The convex hull $\Delta$ of these three vectors is the affine slice of $\omega$ on which the first coordinate is equal to $1$. We consider the p-divisor
$\D=\D_{\infty}\otimes \{\infty\}+\D_{0}\otimes \{0\}+\D_{1}\otimes \{1\}$ on $Y=\P^1$, where the piecewise-linear functions $\D_P:\omega\to\R$ induce subdivisions of $\Delta$, and has values on it vertices exactly as pictured in Figure \ref{fig:D}. For the  figure, we have projected $\Delta$ to the second and third coordinates.

Let $y\in\C(\P^1)$ be such that $\Div(y)=\{0\}-\{\infty\}$, that is, $y$ is a rational function vanishing only at the point $0$, with a single pole at $\infty$. We obtain that 
\[
	R(\D)=\C[y\chi^{(1,1,0)},\chi^{(1,0,0)},y\chi^{(1,0,0)},\chi^{(1,-1,0)},y \chi^{(1,0,1)}].
\]
Sending
\begin{align*}
	x_1\mapsto y \chi^{(1,1,0)},\  x_2\mapsto \chi^{(1,-1,0)},\ x_3\mapsto (1-y)\chi^{(1,0,0)},\ 
	x_4\mapsto y\chi^{(1,0,0)},\ x_5\mapsto y\chi^{(1,0,1)}
\end{align*}
induces an isomorphism of $R$ with $R(\D)$. This p-divisor $\D$ for $R$ could have been obtained with the method of \cite[\S11]{AH}.

Under this identification of $R$ with $R(\D)$, the valuation $\val_1$ is just the valuation $\val_Q$ for $Q=0\in\P^1$; likewise $\val_2=\val_Q$ for $Q=1\in\P^1$. Hence, to obtain $\Psi_i$ as in Theorem \ref{thm:NO}, we take $\Psi_0,\Psi_1,\Psi_2$ to respectively be the restrictions of $\D_\infty$, $\D_0$, and $\D_1$ to $\Delta$. See Figure \ref{fig:D}.
 One immediately checks that the description of $\Delta(A,\val_i)$ from Theorem \ref{thm:NO} holds.
 \subsection{Dual Description and Mutations}\label{sec:mutation}
 We continue with notation as in \S\ref{sec:wall}. In particular, $\Psi_0,\Psi_1,\Psi_2$ and $\Delta$ are as in Theorem \ref{thm:NO}.
Define polyhedra
\begin{align*}
	\nabla_i=\{u\in\R^{d}\ |\ \langle u,x\rangle \geq \Psi_i(x)\ \textrm{for all}\ x\in\Delta\}.
\end{align*}
Using Minkowski addition and the inclusion $\R^d\hookrightarrow \R^{d}\times\R=\R^{d+1}$ sending $u$ to $(u,0)$, we further define
\begin{align*}
	&\sigma_1=\cone\big(\nabla_1+e,\nabla_2+\nabla_0-e\big)\subset\R^{d+1}\\
	&\sigma_2=\cone\big(\nabla_2+e,\nabla_1+\nabla_0-e\big)\subset\R^{d+1}.
\end{align*}
Here $e$ denotes the $(d+1)$st standard basis vector of $\R^{d+1}$.
It is a straightforward exercise in convexity to see that we recover $\Delta(R,\val_i)$ by intersecting the dual cone $\sigma_i^\vee\subset \R^{d+1}$ with $\{1\}\times \R^d$.

Consider a vector $\eta$ in the interior of $\Delta$ with $\Psi_1(\eta),\Psi_2(\eta)> 0$ and $\Psi_0(\eta)=0$.
	We obtain polytopes 
\[
	D_i=\{u\in\sigma_i\ |\ \langle u,(\eta,0)\rangle =1 \}
\]
for $i=1,2$, from which we can recover $\sigma_i$, and thus $\Delta(R,\val_i)$.

\begin{remark}\label{rem:eta}
	In general, such $\eta$ may not exist. Nonetheless, for any $\eta$ in the interior of $\Delta$, there exist rational numbers $c_0,c_1,c_2$ with $c_0+c_1+c_2=0$ such that the $\Psi_i'=\Psi_i+c_i$ satisfy the desired condition. The polytopes resulting from the $\Psi_i'$ as described in Theorem \ref{thm:NO} will be rational translates of $\Delta(A,\val_1)$ and $\Delta(A,\val_2)$ in the direction of the final coordinate.
	
	On the other hand, the choice of $\eta$ is far from unique. However, if $\Delta(A,\val_1)$ (or $\Delta(A,\val_2)$) has a unique interior lattice point (for example, $\Delta(A,\val_1)$ is reflexive), a natural choice for $\eta$ is the projection to $M$ of this lattice point.
\end{remark}

The transition from $D_1$ to $D_2$ may be viewed as a generalization of the \emph{combinatorial mutations} considered in \cite{ACGK}, as we now explain.
To make this connection, we will assume that the point $(\eta,0)\in\Delta\times \R$ is contained in every facet of the graph of $\Psi_0$.
This is equivalent to requiring that $\langle u,\eta\rangle=0$ for each vertex $u$ of $\nabla_0$.

For $i=1,2$ we define 
\begin{align*}
	h_i&=\max_{u\in \nabla_i}\left(\frac{1}{\langle u,\eta \rangle}\right).
\end{align*}
Let $\lambda$ be the smallest natural number such that $\lambda/\langle u,\eta\rangle\in \N$ for all vertices $u$ of $\nabla_0,\nabla_1,\nabla_2$.
For any integer $\ell$, let $H_\ell=\{ u\in \R^d\ |\ \langle u,\eta \rangle =\ell/\lambda\}$. Set
$\tau=(\cone \Delta)^\vee\cap H_\lambda$ and $F=\nabla_0\cap H_0$.
Then we can rewrite $D_1$ as 
\begin{align*}
	D_1=
	\conv \Big(\tau, \bigcup_{\ell=1}^{\lambda\cdot h_1} \frac{\lambda}{\ell}\left[(\nabla_{1}\cap H_{\ell})+e\right],\quad\\ 
	\qquad\bigcup_{\ell=1}^{\lambda\cdot h_2} \frac{\lambda}{\ell}\left[(\nabla_{2}\cap H_{\ell})+F-e\right] \Big)
\end{align*}
and $D_2$ as 
\begin{align*}
	D_2=
	\conv \Big(\tau, \bigcup_{\ell=1}^{\lambda\cdot h_2} \frac{\lambda}{\ell}\left[(\nabla_{2}\cap H_{\ell})+e\right],\quad\\ 
	\qquad\bigcup_{\ell=1}^{\lambda\cdot h_1} \frac{\lambda}{\ell}\left[(\nabla_{1}\cap H_{\ell})+F-e\right] \Big).
\end{align*}
Comparing with \cite[Definition 5]{ACGK}, we see that up to mirroring the final coordinate, this is a combinatorial mutation, except that we have relaxed the integrality constraints from loc.~cit.

\subsection{Mutation Example}
To illustrate the connection to mutations, we present a second example.
We keep notation from \S \ref{sec:wall} and \S\ref{sec:mutation}.
Consider the ideal $I$ of $\C[x_1,\ldots,x_8]$ generated by 
\begin{align*}
      {x}_{4}+{x}_{5}-{x}_{6},\qquad  {x}_{3}-{x}_{6}-{x}_{8},\qquad      {x}_{2}-{x}_{5}+{x}_{6}-{x}_{8}\\
      {x}_{1}{x}_{7}-{x}_{5}{x}_{8},\qquad
      {x}_{5}{x}_{6}-{x}_{6}^{2}+{x}_{5}{x}_{8}\end{align*}
which is homogeneous with respect to the standard grading. Its tropicalization has three maximal cones, all of which are prime. These three cones intersect in $C$, the lineality space of $\trop(I)$. The respective initial ideals are
\begin{align*}
I_1=\langle
      {x}_{4}-{x}_{6},
      {x}_{3}-{x}_{8},
      {x}_{2}-{x}_{8},
      {x}_{1}{x}_{7}-{x}_{5}{x}_{8},
      {x}_{6}^{2}-{x}_{5}{x}_{8}\rangle\\
I_2=\langle
      {x}_{5}-{x}_{6},
      {x}_{4}-{x}_{8},
      {x}_{3}-{x}_{6},
      {x}_{1}{x}_{7}-{x}_{6}{x}_{8},
      {x}_{2}{x}_{6}-{x}_{8}^{2}\rangle\\
I_3=\langle
      {x}_{6}+{x}_{8},
      {x}_{4}+{x}_{5},
      {x}_{2}-{x}_{5},
      {x}_{1}{x}_{7}-{x}_{5}{x}_{8},
      {x}_{3}{x}_{5}-{x}_{8}^{2}\rangle.
      \end{align*}
All three are prime ideals; we focus on the ideals $I_1$ and $I_2$.
      We may take the elements $u_1,u_2,w_1,w_2$ to be the rows of 
the matrix
{\scriptsize{\[
\left(
\begin{array}{c c c c c c c c}
1&1&1&1&1&1&1&1\\
      {-1}&0&0&0&0&0&1&0\\
\hline
      0&{-1}&{-1}&0&1&0&0&{-1}\\
\hline
      0&1&{-1}&0&{-1}&{-1}&{-1}&0
\end{array}\right).
\]}}
\noindent Then $\Delta(A,\val_1)$ and $\Delta(A,\val_2)$ are the convex hulls of the columns of this matrix, after removing the fourth and third rows, respectively.

In this example, we may identify $M$ with $\Z^2$ via the first two rows of the above matrix. The polytope $\Delta$ is exactly the convex hull of $(1,1)$ and $(1,-1)$. The functions $\Psi_i$ from Theorem \ref{thm:NO} are as follows:
\begin{align*}
	\Psi_0(x,1)&=\begin{cases}
		0 & x\leq 0\\
		-x & x \geq 0
	\end{cases}\\
	\Psi_1(x,1)&=\begin{cases}
		x+1 & x\leq 0\\
		-x+1 & x \geq 0
	\end{cases}\\
	\Psi_2(x,1)&=\begin{cases}
		x+1 & x\leq 0\\
		1 & x \geq 0
	\end{cases}.
\end{align*}
This gives rise to 
\begin{align*}
	\nabla_0&=\conv\{(0,0),(0,-1)\}+\cone\{(1,1),(1,-1)\}\\
	\nabla_1&=\conv\{(1,1),(1,-1)\}+\cone\{(1,1),(1,-1)\}\\
	\nabla_2&=\conv\{(1,1),(1,0)\}+\cone\{(1,1),(1,-1)\}
\end{align*}
and  cones $\sigma_1$ and $\sigma_2$ generated respectively by the columns of the matrices
\begin{equation*}
	\tag{$\dagger\dagger$}
\label{eqn:mmm}
	\left(\begin{array}{c c c c}
1&1&1&1\\
1&-1&1&-1\\
1&1&-1&-1
	\end{array}
		\right)
		\qquad\textrm{and}\qquad\left(\begin{array}{c c c c}
1&1&1&1\\
1&0&1&-2\\
1&1&-1&-1
	\end{array}
		\right).
\end{equation*}

The natural choice of $\eta$ in this example (see Remark \ref{rem:eta}) is $\eta=(1,0)$. With this choice of $\eta$, we obtain that $D_1$ and $D_2$
are respectively the convex hulls of the columns of the matrices in \eqref{eqn:mmm}. We also obtain $h_1=h_2=\lambda=1$. Furthermore, 
\[
	\tau=\conv\{(1,-1),(1,1)\}\qquad F=\conv\{(0,0),(0,-1)\}.
\]
Considering Figure \ref{fig:ex}, we see that $D_1$ and $D_2$ are exactly as described at the end of \S\ref{sec:mutation}.
\begin{figure}

	\begin{tikzpicture}
		\draw[fill,color=lightgray] (1,1) -- (1,-1) -- (-1,-1) -- (-1,1);
\draw[fill] (1,1) circle [radius=.05];
\draw[fill] (0,1) circle [radius=.05];
\draw[fill] (-1,1) circle [radius=.05];
\draw[fill] (1,0) circle [radius=.05];
\draw[fill] (0,0) circle [radius=.05];
\draw[fill] (-1,0) circle [radius=.05];
\draw[fill] (1,-1) circle [radius=.05];
\draw[fill] (0,-1) circle [radius=.05];
\draw[fill] (-1,-1) circle [radius=.05];
\draw[dashed,very thick] (-1,0) -- (1,0);
\draw[dashed,very thick] (-1,1) -- (1,1);
\draw[dashed,very thick] (-1,-1) -- (1,-1);
\node [above] at (0,1.1) {\scriptsize{$\nabla_1\cap H_1+e$}};
\node [above] at (.2,0) {\scriptsize{$\tau$}};
\node [below] at (0,-1.1) {\scriptsize{$\nabla_2\cap H_1+F-e$}};
\node [right] at (1,1) {\scriptsize{$(1,1,1)$}};
\node [right] at (1,-1) {\scriptsize{$(1,1,-1)$}};
\node [left] at (-1,1) {\scriptsize{$(1,-1,1)$}};
\node [left] at (-1,-1) {\scriptsize{$(1,-1,-1)$}};
\node  at (0,-2) {$D_1$};
	\end{tikzpicture}
	\hspace{1cm}	\begin{tikzpicture}
		\draw[fill,color=lightgray] (1,1) -- (1,-1) -- (-2,-1) -- (0,1);
\draw[fill] (1,1) circle [radius=.05];
\draw[fill] (0,1) circle [radius=.05];
\draw[fill] (-2,-1) circle [radius=.05];
\draw[fill] (1,0) circle [radius=.05];
\draw[fill] (0,0) circle [radius=.05];
\draw[fill] (-1,0) circle [radius=.05];
\draw[fill] (1,-1) circle [radius=.05];
\draw[fill] (0,-1) circle [radius=.05];
\draw[fill] (-1,-1) circle [radius=.05];
\draw[dashed,very thick] (-1,0) -- (1,0);
\draw[dashed,very thick] (0,1) -- (1,1);
\draw[dashed,very thick] (-2,-1) -- (1,-1);
\node [above] at (.5,1.1) {\scriptsize{$\nabla_2\cap H_1+e$}};
\node [above] at (.2,0) {\scriptsize{$\tau$}};
\node [below] at (-.5,-1.1) {\scriptsize{$\nabla_1\cap H_1+F-e$}};
\node [right] at (1,1) {\scriptsize{$(1,1,1)$}};
\node [right] at (1,-1) {\scriptsize{$(1,1,-1)$}};
\node [left] at (0,1) {\scriptsize{$(1,0,1)$}};
\node [left] at (-2,-1) {\scriptsize{$(1,-2,-1)$}};
\node  at (0,-2) {$D_2$};
	\end{tikzpicture}

	\caption{A combinatorial mutation}\label{fig:ex}

\end{figure}
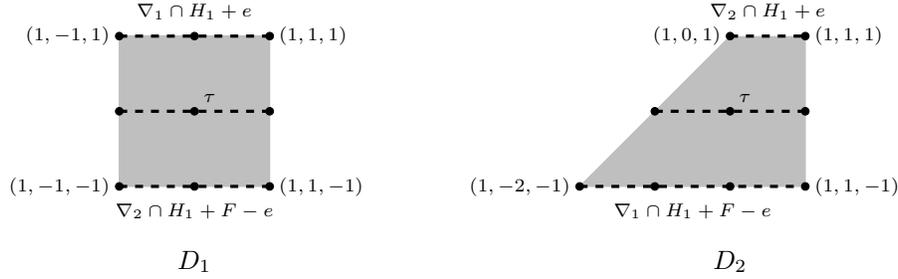

\section*{Acknowledgements}
Both authors are grateful for the support, hospitality, and the research environment of the Fields Institute for Research in Mathematics in Toronto, Canada, where this project started. We also thank the Association for Women in Mathematics for the Mentoring Travel Grant awarded to the first author, which enabled us to work on this project. In addition, we thank the Osaka City University Advanced Mathematical Institute, where parts of this paper were written. We are indebted to Nathan Ilten for patiently explaining the complexity-one perspective to us and for writing the Appendix on such short notice. We are equally indebted to Diane Maclagan for simplifying our proof of Theorem~\ref{theorem: fiber lengths equal}. We also thank Chris Manon, Kiumars Kaveh, and Kristin Shaw for crucial conversations and suggestions. We are grateful to Adam Van Tuyl for helping us find Example~\ref{example: algebraic is not geometric} and to Anders Jensen for answering questions about tropical Grassmannians. The first author is grateful to Jason Anema, Federico Ardila, and Patricio Gallardo for helpful conversations.
We would like to thank the referee for carefully reading the manuscript and providing insightful comments that improved the exposition.

\end{document}